\newtheorem{thm}{Theorem}[section]
\newtheorem{prop}[thm]{Proposition}
\newtheorem{cor}[thm]{Corollary}
\newtheorem{lem}[thm]{Lemma}
\theoremstyle{definition}
\newtheorem{defn}[thm]{Definition}
\newtheorem{rem}[thm]{Remark}
\newtheorem{eg}[thm]{Example}
\newcommand{\PC}{\mathcal{P}}
\newcommand{\basicinners}{\mathcal{I}}
\newcommand{\MCS}{\mathrm{MCS}(\Gamma)}
\newcommand{\QAutW}{{\operatorname{Aut}^1 W}}
\newcommand{\ab}{{\rm ab}}
\newcommand{\Ab}{{\rm Ab}}
\newcommand{\Aut}{\operatorname{Aut}}
\newcommand{\AutW}{\Aut W}
\newcommand{\CAutW}{\operatorname{Aut}^\ast W}
\newcommand{\COutW}{\operatorname{Out}^\ast W}
\newcommand{\AutZ}{\operatorname{{Aut}^0} W}
\newcommand{\Inn}{\operatorname{Inn}}
\newcommand{\InnW}{\Inn W}
\newcommand{\Inner}{\Inn W}
\newcommand{\Out}{\mathcal{O}ut}
\newcommand{\OutW}{\Out \, W}
\newcommand{\COutWQ}{\mathcal{O}ut^\ast \, W}
\newcommand{\OutZ}{\operatorname{Out}^0 W}
\newcommand{\OutZQ}{\mathcal{O}ut^0 \, W}
\newcommand{\PCT}{\mathcal{S}}
\newcommand{\I}{\mathcal{I}}
\newcommand{\PCMinus}{\PC^0}
\newcommand{\PartitionSet}{\mathcal{L}}
\newcommand{\M}{\mathcal{M}}
\newcommand{\U}{\mathcal{U}}
\newcommand{\Integer}{\mathbb {Z}}
\newcommand{\Nat}{\mathbb {N}}
\newcommand{\vcd}{\operatorname{vcd}}
\newcommand{\cyclicallyreduced}[1]{[#1]}
\newcommand{\retractGAutWToCAutW}{r}
\newcommand{\vertices}{V}
\newcommand{\edges}{E}
\newcommand{\leaves}{V_1}
\newcommand{\abs}[1]{\left\vert#1\right\vert}
\newcommand{\ordermap}{{\mathbf m}}
\newcommand{\Restriction}{\rho}
\newcommand{\supp}{\operatorname{supp}}
\begin{document}
\title{On the automorphisms of a graph product of abelian groups}
\author{Mauricio Gutierrez \and Adam Piggott \and Kim Ruane\\ \\Department of Mathematics, Tufts University\\ 503 Boston Ave, Medford MA 02155, USA\\ \\
Mauricio.Gutierrez@tufts.edu \\
Adam.Piggott@tufts.edu\\
Kim.Ruane@tufts.edu\\ \\
Corresponding Author: Adam Piggott}
\date{August 17, 2007}
\maketitle

\begin{abstract}
We study the automorphisms of a graph product of
finitely-generated abelian groups $W$.  More precisely, we study a natural subgroup $\CAutW$ of $\AutW$, with $\CAutW = \AutW$ whenever
vertex groups are finite and in a number of other cases.
We prove a number of structure results, including a semi-direct product decomposition
$\CAutW = (\InnW \rtimes \OutZ ) \rtimes \QAutW$.  We also give a number of applications, some of which are geometric in nature.
\end{abstract}



\section{Introduction}

The graph product of groups construction was first defined by Green
\cite{GreenThesis}. It interpolates between the free product
construction, in the case that $\Gamma$ is a discrete graph, and the direct
product construction, in the case that $\Gamma$ is a complete graph. The
class of graph products of finitely-generated abelian groups
contains a number of important subclasses that are often
treated separately. In the present article we pursue a
unified treatment of the automorphisms of such
groups. Our methods are combinatorial.  Our results have a number of applications which are
geometric in nature.

A non-trivial finite simplicial graph $\Gamma = \Gamma(\vertices,
\edges)$ is a pair consisting of a non-empty finite set $\vertices =
\{v_1, v_2, \dots, v_N\}$ (the \emph{vertices}) and a set $\edges$
(the \emph{edges}) of unordered pairs from $\vertices$.
We say that vertices $v_i, v_j$ are \emph{adjacent} if $\{v_i, v_j\} \in \edges$.  We consider
$\Gamma$ to be a metric object in the usual way, with $d_\Gamma$
denoting the distance function. An \emph{order map (on $\Gamma$)} is
a function
$$\ordermap\!: \{1, 2, \dots, N\} \to \{p^\alpha \; | \; p \hbox{ prime and }
\alpha \in \Nat\} \cup \{\infty\}.$$ A pair $(\Gamma, \ordermap)$ is called a \emph{labeled graph} and
determines a group $W(\Gamma, \ordermap)$ with the following presentation (by convention, the relation $v_i^{\infty}$ is the trivial relation):
\begin{equation}\label{PresentationOfW}
\langle \vertices \; | \; v_i^{\ordermap(i)},  v_j v_k v_j^{-1} v_k^{-1} \;\; (1 \leq i, j, k \leq N, j < k, d_\Gamma(v_j, v_k) = 1)\rangle.
\end{equation}
We say that $W(\Gamma, \ordermap)$ is a \emph{graph product of
directly-indecomposable cyclic groups}.  Following an established convention, we do not distinguish between a vertex of $\Gamma$ and
the corresponding generator of $W(\Gamma, \ordermap)$.

The class of graph products of directly-indecomposable cyclic groups is identical to the class of graph products of finitely-generated abelian groups
for the following reason: if $G$ is group and $G$ is isomorphic to a graph product of finitely-generated abelian groups, then there exists a unique isomorphism class of
labeled-graphs $(\Gamma, \ordermap)$ such that $G \cong W(\Gamma, \ordermap)$ \cite{MoAndAdamRigidity}.  Empowered by this fact, we usually omit
mention of $\Gamma$ and $\ordermap$ from the notation, writing $W := W(\Gamma, \ordermap)$.
The important subclasses alluded to in the opening paragraph include finitely-generated
abelian groups ($\Gamma$ a complete graph), graph products of primary cyclic groups ($\ordermap(i) < \infty$ for
each $i$), right-angled Coxeter groups
($\ordermap(i) = 2$ for each $i$) and right-angled Artin groups ($\ordermap(i) = \infty$ for each $i$).

For a full subgraph $\Delta$ of $\Gamma$, we write $W(\Delta)$ for
the subgroup (known as a \emph{special subgroup}) of $W$
generated by the vertices in $\Delta$.  We write $\MCS$ for the set
of maximal complete subgraphs (or cliques) of $\Gamma$. The subgroups
$W(\Delta)$, $\Delta \in \MCS$, will be called the \emph{maximal complete subgroups} of $W$.  If $W$ is a graph product of primary cyclic groups, then the maximal
complete subgroups are a set of representatives for the conjugacy classes of maximal finite subgroups of $W$ \cite[Lemma 4.5]{GreenThesis}
and each automorphism of $W$ maps each maximal complete subgroup to a conjugate of some maximal complete subgroup.  This is not true in an arbitrary graph product of
directly-indecomposable cyclic groups, but we may pretend that it is by restricting our attention to a natural subgroup of $\AutW$.

\begin{defn}
Write $\CAutW$ for the subgroup of $\AutW$ consisting of those automorphisms which map each maximal complete
subgroup to a conjugate of a maximal complete subgroup.
\end{defn}

The following lemma is immediate from the discussion above and the main result of \cite{Laurence}.
For each $1 \leq i \leq N$, we write $L_i$ (resp, $S_i$) for the \emph{link} (resp. \emph{star}) of $v_i$.

\begin{lem}\label{WhenGAutWEqualsAutW}
If $W$ is a graph product of directly-indecomposable
cyclic groups, then $\CAutW = \AutW$ in each of the following cases:
\begin{enumerate}
\item \label{GPPCGCondition} $W$ is a graph product of primary cyclic groups;
\item \label{NoNGTCondition} $W$ is a right-angled Artin group and $L_i \not \subseteq L_j$ for each pair of distinct non-adjacent vertices $v_i, v_j \in \vertices$;
\item \label{EasyNoNGTCondition} $W$ is a right-angled Artin group and $\Gamma$ contains no vertices of valence less than two and no circuits of length less than 5.
\end{enumerate}
\end{lem}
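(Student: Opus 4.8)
\emph{Proof proposal.} The plan is to dispose of (1) on its own and then handle (2) and (3) together, since those two rest on the same engine. For (1), observe that when $\ordermap(i) < \infty$ for every $i$, each maximal complete subgroup $W(\Delta)$, $\Delta \in \MCS$, is finite, hence by \cite[Lemma 4.5]{GreenThesis} it is a \emph{maximal} finite subgroup of $W$; an automorphism of $W$ carries maximal finite subgroups to maximal finite subgroups, and again by \cite[Lemma 4.5]{GreenThesis} every maximal finite subgroup is conjugate to some $W(\Delta')$, $\Delta' \in \MCS$. Thus every automorphism lies in $\CAutW$, so $\AutW \subseteq \CAutW$; the reverse inclusion is immediate. (This is essentially a restatement of the paragraph preceding the definition of $\CAutW$.)

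For (2) and (3) the engine is the main result of \cite{Laurence}: when $W$ is a right-angled Artin group, $\AutW$ is generated by graph automorphisms, inversions $v_i \mapsto v_i^{-1}$, partial conjugations, and dominated transvections $v_i \mapsto v_i v_j$ (permitted exactly when $v_i \neq v_j$ and $L_i \subseteq S_j$). Since $\CAutW$ is a subgroup, it suffices to place each such generator in $\CAutW$. A graph automorphism permutes the cliques of $\Gamma$, hence permutes $\{W(\Delta) : \Delta \in \MCS\}$. An inversion $v_i \mapsto v_i^{-1}$ fixes each $W(\Delta)$ setwise (a clique either omits $v_i$ or contains it, and $v_i^{-1}$ generates the same cyclic group as $v_i$). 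A partial conjugation, like any inner automorphism, sends $W(\Delta)$ to a conjugate of itself: since $\Delta$ is a clique, the vertices of $\Delta$ lying outside $S_j$ form a connected subgraph and hence sit in a single connected component of $\Gamma \setminus S_j$, while the remaining vertices of $\Delta$ lie in $S_j$ and therefore commute with $v_j$, so $W(\Delta)$ is sent either to itself or to $v_j W(\Delta) v_j^{-1}$. So the only generators that might leave $\CAutW$ are the dominated transvections, and the hypotheses of (2) and (3) are precisely what is needed to kill the offending ones.

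The pivotal observation, which I would record as a short claim, is that an \emph{adjacent} dominated transvection $\tau\colon v_i \mapsto v_i v_j$ (with $v_i$ adjacent to $v_j$ and $L_i \subseteq S_j$) always lies in $\CAutW$: if some maximal clique $\Delta$ contained $v_i$ but not $v_j$, then each vertex of $\Delta \setminus \{v_i\}$ would lie in $L_i \subseteq S_j$, be distinct from $v_j$, and hence be adjacent to $v_j$; together with $v_i \sim v_j$ this makes $\Delta \cup \{v_j\}$ a strictly larger clique, contradicting maximality. So every maximal clique containing $v_i$ also contains $v_j$, whence $\tau$ fixes each $W(\Delta)$ (when $v_i \in \Delta$ recover $v_i = (v_i v_j)v_j^{-1} \in W(\Delta)$; when $v_i \notin \Delta$ the map $\tau$ fixes $\Delta$ pointwise). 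Consequently only \emph{non-adjacent} dominated transvections are dangerous, and for non-adjacent $v_i, v_j$ the relation $L_i \subseteq S_j$ is exactly $L_i \subseteq L_j$. Hypothesis (2) rules those pairs out directly. For hypothesis (3), I would check that it implies (2): if $v_i, v_j$ were distinct, non-adjacent and $L_i \subseteq L_j$, choose distinct $v_k, v_l \in L_i$ (possible because $v_i$ has valence at least $2$); then $v_i, v_k, v_j, v_l$ are four distinct vertices (distinctness is routine given $v_i \not\sim v_j$) and $v_i \sim v_k \sim v_j \sim v_l \sim v_i$ is a circuit of length $4$, contradicting the absence of circuits of length less than $5$.

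I expect the only genuinely non-formal step to be this clique-tracking in the transvection case — pinning down why an adjacent transvection cannot move a maximal complete subgroup out of its conjugacy class, and correctly translating $L_i \subseteq S_j$ into $L_i \subseteq L_j$ when $v_i$ and $v_j$ are non-adjacent. Everything else is bookkeeping against Laurence's generating set and against \cite[Lemma 4.5]{GreenThesis}.
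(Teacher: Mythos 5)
Your proposal is correct and follows exactly the route the paper intends: the paper gives no written proof, stating that the lemma "is immediate from the discussion above and the main result of \cite{Laurence}," i.e.\ case (1) from Green's Lemma 4.5 together with the fact that automorphisms preserve maximal finite subgroups up to conjugacy, and cases (2) and (3) by checking Laurence's generators, where only non-adjacent dominated transvections (which require $L_i \subseteq L_j$) could fail to lie in $\CAutW$. Your clique-tracking argument for adjacent transvections and partial conjugations, and the $4$-circuit argument showing (3) implies (2), are precisely the details the paper leaves to the reader.
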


\begin{rem}
Case (\ref{NoNGTCondition}) can be substantially generalized to groups that are not right-angled Artin groups.
\end{rem}

We now report the main results of the present article.  They concern the structure of $\CAutW$ and shall make
reference to the subgroups and quotients of $\Aut W$ defined in Figure \ref{SubgroupDefinitionTable}.
In writing $\AutZ$ for the subgroup of
`conjugating automorphisms', we follow Tits \cite{Tits}.
M{\"u}hlherr \cite{Bernie} writes ${\rm Spe}(W)$ for the same subgroup.  Charney, Crisp and Vogtmann \cite{CharneyCrispVogtmann} use the
notation $\AutZ$ and $\OutZ$ for different subgroups of the automorphism group of a right-angled Artin group than described here.

\begin{figure}
\begin{tabular}{|l|l|}
  \hline
  Group & Description \\
  \hline
  & \\ $\CAutW$ & Those automorphisms of $W$ which map each maximal complete\\
           &  subgroup to a conjugate of a maximal complete subgroup\\ \hline
  & \\ $\QAutW$ & Those automorphisms of $W$ which map each maximal complete\\
           & subgroup to a maximal complete subgroup\\ \hline
  & \\ $\AutZ$  & Those automorphisms of $W$ which map each vertex $v_i \in \vertices$\\
           & to a conjugate of itself\\ \hline
  & \\ $\InnW$  & The inner automorphisms of $W$\\ & \\ \hline
  & \\ $\COutW$  & The subgroup of $\AutW$ generated by the set $\PCMinus \sqcup \QAutW$ \\ &  (see Definition \ref{DefnOfQ})    \\ \hline
  & \\ $\OutZ$  & The subgroup of $\AutW$ generated by the set $\PCMinus$ \\ &\\ \hline
  & \\ $\OutW$  & The quotient $\AutW / \InnW$\\ & \\ \hline
  & \\ $\COutWQ$  & The quotient $\CAutW / \InnW$\\ & \\ \hline
  & \\ $\OutZQ$  & The quotient $\AutZ / \InnW$\\ & \\ \hline
\end{tabular}
\caption{Subgroups and quotients of $\AutW$.
\label{SubgroupDefinitionTable}}
\end{figure}

Tits \cite{Tits} proved that if $W$ is a right-angled Coxeter group, then $\AutW  = \AutZ \rtimes \QAutW$.
Our first main result is a generalization of Tits' splitting.

\begin{thm}[cf. \cite{Tits}]\label{TitsSplittingRecovered}
If $W$ is a graph product of directly-indecomposable
cyclic groups, then $$\CAutW = \AutZ \rtimes \QAutW.$$
\end{thm}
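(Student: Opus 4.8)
The strategy is to exhibit $\CAutW$ as an internal semidirect product $\AutZ \rtimes \QAutW$, which requires three things: (i) $\AutZ \cap \QAutW = \{1\}$; (ii) $\AutZ$ is normal in $\CAutW$; and (iii) $\CAutW = \AutZ \cdot \QAutW$.

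For (i), observe that an element of $\QAutW$ permutes the vertices (up to the action on vertex groups), while an element of $\AutZ$ fixes every vertex up to conjugacy; if a single automorphism does both, then it sends each $v_i$ both to a generator of a vertex group inside a maximal complete subgroup and to a conjugate of $v_i$ itself. A short argument using the normal form in a graph product (a conjugate of $v_i$ that lies in a standard vertex group must equal a generator of $\langle v_i\rangle$, and degree/adjacency considerations force the permutation to be trivial) shows such an automorphism is the identity on generators, hence trivial.

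For (iii), which I expect to be the crux, the plan is to build a retraction $\CAutW \to \QAutW$. Given $\phi \in \CAutW$, each maximal complete subgroup $W(\Delta)$, $\Delta \in \MCS$, is sent to a conjugate $g_\Delta W(\Delta') g_\Delta^{-1}$ of some $\Delta' \in \MCS$; the key point is that the data $(\Delta \mapsto \Delta', \text{action on vertex groups})$ is independent of the conjugators and assembles into a well-defined automorphism $q(\phi) \in \QAutW$. One must check this is well-defined (the target $\Delta'$ and the induced isomorphism of vertex groups are canonical — this should follow from rigidity, e.g. \cite{MoAndAdamRigidity}, together with the fact that each vertex lies in at least one maximal clique so its image vertex group is pinned down) and that $q$ is a homomorphism restricting to the identity on $\QAutW$. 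Then $\ker q \subseteq \AutZ$: if $q(\phi) = 1$, then $\phi$ fixes every maximal complete subgroup up to conjugacy and acts trivially on each vertex group, and one deduces using the conjugators and consistency across overlapping cliques that each $v_i$ goes to a conjugate of itself. Conversely $\AutZ \subseteq \ker q$ is clear, so $\ker q = \AutZ$, giving simultaneously the normality in (ii) and the splitting $\CAutW = \AutZ \rtimes \QAutW$ with $\QAutW$ the image of the section.

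The main obstacle is the well-definedness of $q$: controlling the ambiguity in the conjugating elements $g_\Delta$ and showing that the combinatorial/vertex-group data they carry is canonical. This is where I would lean hardest on the rigidity result cited in the excerpt and on careful bookkeeping of how the images of overlapping maximal cliques must agree on their common vertices, pinning down both the vertex permutation and the induced automorphisms of the individual (directly-indecomposable cyclic) vertex groups. Once $q$ is in hand, everything else is formal.
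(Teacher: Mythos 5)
Your overall strategy is the same as the paper's: build a retraction $\CAutW \to \QAutW$ whose kernel is $\AutZ$, after which the semidirect decomposition is formal (your separate step (i) is then redundant). However, the step you yourself identify as the crux --- well-definedness of $q$ --- is left unproved, and the tools you propose do not fill the hole. First, an element of $\QAutW$ is not determined by the combinatorial data ``$\Delta \mapsto \Delta'$ together with an action on vertex groups'': an automorphism in $\QAutW$ may send a vertex $v_i$ to an arbitrary element of the target maximal complete subgroup (for instance transvection-type images $v_i \mapsto v_i v_j$ with $v_j$ adjacent to $v_i$), so what you actually need is, for each vertex $v$, a canonical element of $W(\Delta')$ that is independent of the conjugator $g_\Delta$; your sketch never produces such an element. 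Second, the rigidity theorem of \cite{MoAndAdamRigidity} says the labeled graph is unique up to isomorphism for the isomorphism type of $W$; it says nothing about the ambiguity in the elements $g_\Delta$ or about how the images of overlapping cliques pin down individual vertex images, so ``leaning on rigidity'' does not resolve the well-definedness. Third, even granted well-definedness, the claims that $q$ is a homomorphism and that $\ker q \subseteq \AutZ$ (``one deduces using the conjugators\dots that each $v_i$ goes to a conjugate of itself'') are themselves the substantive computations and are only asserted.

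The paper's device sidesteps all of this: for $\gamma \in \CAutW$ and a vertex $v$, the element $\gamma(v)$ lies in some $wW(\Delta)w^{-1}$ with $\Delta \in \MCS$, and one defines $r(\gamma)(v) := [\gamma(v)]$, the unique minimal-length element in the conjugacy class of $\gamma(v)$, which is precisely the unique element of $W(\Delta)$ in that class. Uniqueness makes well-definedness automatic; that $r(\gamma)$ preserves the defining relations uses only that maximal complete subgroups are abelian; the identity $r(\delta\gamma) = r(\delta)r(\gamma)$ follows from the computation $r(\delta)(a) = [\delta(a)]$ for $a \in W(\Delta)$; and $r$ visibly restricts to the identity on $\QAutW$ and has kernel exactly $\AutZ$ (since $[\gamma(v_i)] = v_i$ means $\gamma(v_i)$ is conjugate to $v_i$). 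If you replace your clique-level bookkeeping by this vertex-by-vertex definition via canonical conjugacy representatives, your outline becomes the paper's proof; as written, the central well-definedness claim is a genuine gap.
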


If $W_{\ab}$ denotes the abelianization of $W$, then the subgroup $\QAutW$ is isomorphic to the image
of $\CAutW$ under the natural map $\AutW \to \AutW_{\ab}$. In particular, $\QAutW$ is finite
in the case that $W$ is a graph product of primary cyclic groups.

We next turn our attention to the study of $\AutZ$.  Choosing the right generating set for various subgroups of $\AutZ$ shall be important throughout.
For $1 \leq i \leq N$ and $K$ a (non-trivial) connected
component of $\Gamma \setminus S_i$, we write $\chi_{iK}$ for the
automorphism of $W$ determined by
$$\chi_{iK}(v_j) = \left\{%
\begin{array}{ll}
  v_iv_jv_i^{-1} & \hbox{if } v_j \in K, \\
  \;\,\, \,\, v_j & \hbox{if } v_j \not \in K. \\
\end{array}%
\right.$$  Such an automorphism is called a \emph{partial conjugation} with \emph{operating letter} $v_i$ and \emph{domain} $K$.  We write $\PC$ for the set of partial conjugations
(see $\S$\ref{ExampleSection} for an example).
Laurence \cite[Theorem 4.1]{LaurenceThesis} proved that $\AutZ$ is generated by $\PC$.

For a subgraph $\Omega \subseteq \Gamma$, we write $pr_\Omega$ for the retraction map $W \to W(\Omega)$ and
$\PC_\Omega := \{\chi_{i Q} \in \PC \; | \; v_i \in \Omega\}$.
For $\phi \in \AutZ$
and $w_1, \dots, w_N \in W$ such that $\phi(v_i) = w_i v_i w_i^{-1}$ for each $1 \leq i \leq N$, we write
$\phi_\Omega$ for the map $V \to W$ defined by
$$v_i \mapsto pr_\Omega(w_i) .v_i .pr_\Omega(w_i)^{-1} \text{ for each } 1 \leq i \leq N.$$
We shall show that $\phi_\Omega$ extends to an automorphism of $W$, also denoted by $\phi_\Omega$.  In fact, the following holds:

\begin{thm}\label{RetractionRewritingLemma}
For each subgraph $\Omega \subseteq \Gamma$, the map $\phi \mapsto \phi_\Omega$ is a
retraction homomorphism $\AutZ \to \langle \PC_\Omega \rangle$.
\end{thm}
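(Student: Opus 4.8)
The plan is to establish three things in turn: that $\phi_\Omega$ is well-defined independently of the choice of conjugators $w_i$, that it extends to an automorphism of $W$ lying in $\langle \PC_\Omega \rangle$, and that $\phi \mapsto \phi_\Omega$ is a homomorphism with the claimed retraction property.

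First I would analyze the structure of an element $\phi \in \AutZ$. Since $\AutZ$ is generated by $\PC$ (Laurence), write $\phi = \chi_{i_1 K_1} \cdots \chi_{i_r K_r}$ and track the conjugators $w_j$ that arise from composing partial conjugations; each $w_j$ is a product of operating letters $v_{i_\ell}$ (or their inverses). The key observation is that the operating letters of partial conjugations that ``see'' $v_j$ are controlled, and more importantly, I would identify the correct notion of well-definedness: two conjugators $w_j, w_j'$ give the same automorphism value $w_j v_j w_j^{-1} = w_j' v_j w_j'^{-1}$ precisely when $w_j^{-1} w_j'$ lies in the centralizer $C_W(v_j) = W(S_i)$ (the special subgroup on the star of $v_i$). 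The crux is then to show $pr_\Omega$ sends this ambiguity to ambiguity of the same kind: if $w_j^{-1} w_j' \in W(S_i)$, then $pr_\Omega(w_j)^{-1} pr_\Omega(w_j') \in W(S_i \cap \Omega) \subseteq C_W(v_j)$, so the formula $v_i \mapsto pr_\Omega(w_i) v_i pr_\Omega(w_i)^{-1}$ does not depend on the choice of $w_i$. Here one uses that $pr_\Omega$ is a homomorphism and that it carries the special subgroup $W(S_i)$ onto $W(S_i \cap \Omega)$.

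Next I would show $\phi_\Omega \in \langle \PC_\Omega \rangle$, which simultaneously shows $\phi_\Omega$ extends to an automorphism. Using the factorization $\phi = \chi_{i_1 K_1} \cdots \chi_{i_r K_r}$, I would argue that applying the ``$\Omega$-truncation'' to each factor amounts to deleting those factors whose operating letter $v_{i_\ell} \notin \Omega$ and, for the surviving factors, possibly shrinking the domain $K_\ell$ to a union of connected components of $\Gamma \setminus S_{i_\ell}$ that meet $\Omega$ appropriately; such a modified factor is again a product of partial conjugations in $\PC_\Omega$ (a partial conjugation with a disconnected ``domain'' is a product of genuine partial conjugations). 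Composing, one sees the conjugator of $v_j$ under the resulting element of $\langle \PC_\Omega \rangle$ is exactly $pr_\Omega(w_j)$ up to the centralizer ambiguity handled above, so it agrees with $\phi_\Omega$ on generators and hence equals it. This step is the main obstacle: one must verify that truncating a composite of partial conjectures factor-by-factor really does yield $pr_\Omega$ applied to the total conjugator, i.e. that $pr_\Omega$ commutes with the bookkeeping of how conjugators accumulate under composition. This is essentially the statement that $pr_\Omega(w_1 w_2) = pr_\Omega(w_1) pr_\Omega(w_2)$ combined with a careful induction on $r$, but the subtlety is that the domain $K_\ell$ of a later factor can depend on the letters moved by earlier factors, so the reduction needs the observation that $pr_\Omega$ preserves which connected component of $\Gamma \setminus S_{i_\ell}$ a given surviving vertex lies in.

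Finally, homomorphism and retraction: for $\phi, \psi \in \AutZ$ with conjugators $w_j$ and $u_j$, the composite $\phi\psi$ has conjugator of $v_j$ equal to (a suitable rewriting of) $\phi(u_j) w_j$; applying $pr_\Omega$ and using that $pr_\Omega \circ \phi_\Omega = \phi_\Omega \circ pr_\Omega$ on the relevant subgroup (which follows because $\phi_\Omega$ only conjugates by elements of $W(\Omega)$, on which $pr_\Omega$ is the identity) yields $(\phi\psi)_\Omega = \phi_\Omega \psi_\Omega$. That $pr_\Omega$ restricts to the identity on $W(\Omega)$ immediately gives that the map is the identity on $\langle \PC_\Omega \rangle$, since each $\chi_{iQ}$ with $v_i \in \Omega$ has all conjectators equal to powers of $v_i \in W(\Omega)$, whence $(\chi_{iQ})_\Omega = \chi_{iQ}$. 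Thus $\phi \mapsto \phi_\Omega$ is a retraction homomorphism onto $\langle \PC_\Omega \rangle$.
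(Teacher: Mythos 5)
Your overall skeleton matches the paper's: well-definedness via the fact that the centralizer of $v_j$ is $W(S_j)$ and that $pr_\Omega$ carries $W(S_j)$ into $W(S_j\cap\Omega)$; the observation that a partial conjugation $\chi_{iK}$ is sent to itself when $v_i\in\Omega$ and to the identity otherwise (which is what gives the retraction property); and the recognition that the real content is compatibility of truncation with composition. But that central step is exactly where your proposal has a gap. First, a misdescription: truncation never ``shrinks the domain $K_\ell$'' of a surviving factor --- membership in $\PC_\Omega$ is governed only by the condition $v_{i_\ell}\in\Omega$, and $(\chi_{i_\ell K_\ell})_\Omega=\chi_{i_\ell K_\ell}$ exactly, because it is the conjugators, not the conjugated vertices, that get projected. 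More seriously, the identity you invoke in the last paragraph, $pr_\Omega\circ\phi_\Omega=\phi_\Omega\circ pr_\Omega$, is not the one the computation needs. The conjugator of $v_j$ under $\phi\psi$ is $\phi(u_j)w_j$, so after applying $pr_\Omega$ you must compare $pr_\Omega(\phi(u_j))$ with $\phi_\Omega(pr_\Omega(u_j))$; the required intertwining is $pr_\Omega\circ\phi=\phi_\Omega\circ pr_\Omega$ as maps $W\to W(\Omega)$, which involves $\phi$ itself, not just $\phi_\Omega$. That identity is true and easy to check on vertex generators once one knows $\phi_\Omega$ is a homomorphism --- but that is itself something to be proved (the paper does it by verifying that $\phi_\Omega$ preserves the defining relations of $W$), and your plan extracts the homomorphism property of $\phi_\Omega$ from the very factor-by-factor truncation whose verification silently uses it, so as written the argument is incomplete at its load-bearing point.

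The gap is repairable along your lines: prove directly that for a single partial conjugation $pr_\Omega\circ\chi_{iK}^{\pm1}=(\chi_{iK})_\Omega^{\pm1}\circ pr_\Omega$ on $W$ (both sides are compositions of genuine homomorphisms, so checking on vertices suffices), then induct on the length of a word in $\PC^{\pm1}$ spelling $\phi$ to see that the truncated word conjugates each $v_j$ by $pr_\Omega(w_j)$; your well-definedness step then identifies this automorphism with $\phi_\Omega$. For comparison, the paper avoids the factorization altogether: it shows $\phi_\Omega$ extends to an endomorphism by checking relations, proves $(\phi\circ\theta)_\Omega=\phi_\Omega\circ\theta_\Omega$ for arbitrary $\phi,\theta\in\AutZ$ by an explicit rewriting of the conjugator words (its observations (OB1)--(OB2) encode precisely the intertwining above), and then simply evaluates the resulting homomorphism $\AutZ\to\AutZ$ on the generating set $\PC$ to conclude that its image is $\langle\PC_\Omega\rangle$ and that it restricts to the identity there.
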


The following immediate corollary is key to a number of our arguments.

\begin{cor}[The Restricted Alphabet Rewriting Lemma]\label{RestrictedAlphabetRewriting}
If $\phi \in \AutZ$ and there
exist $z_1, \dots, z_N \in W(\Omega)$ such that $\phi(v_j) = z_j v_j z_j^{-1}$ for each $1 \leq j \leq N$, then
any word for $\phi$ in the alphabet
$\PC^{\pm 1}$ may be rewritten as a word in
the alphabet $\PC_\Omega^{\pm 1}$ (a word
which still spells $\phi$) by simply omitting those generators not
in $\PC_\Omega^{\pm 1}$.
\end{cor}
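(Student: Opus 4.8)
The plan is to deduce the corollary directly from Theorem~\ref{RetractionRewritingLemma} by evaluating the retraction homomorphism $\phi \mapsto \phi_\Omega$ on the two relevant kinds of input: the given automorphism $\phi$ itself, and each partial conjugation occurring in a word that spells it.

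First I would check that $\phi_\Omega = \phi$ under the hypothesis of the corollary. Writing $\phi(v_j) = z_j v_j z_j^{-1}$ with each $z_j \in W(\Omega)$, and using that $pr_\Omega$ restricts to the identity on $W(\Omega)$, the defining formula for $\phi_\Omega$ gives $\phi_\Omega(v_j) = pr_\Omega(z_j)\, v_j\, pr_\Omega(z_j)^{-1} = z_j v_j z_j^{-1} = \phi(v_j)$ for each $j$. Since $\phi_\Omega$ is an automorphism by Theorem~\ref{RetractionRewritingLemma} and it agrees with $\phi$ on the generating set $\vertices$, we get $\phi_\Omega = \phi$.

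Next I would evaluate $(\chi_{iK})_\Omega$ for an arbitrary partial conjugation $\chi_{iK} \in \PC$. If $v_i \in \Omega$ then $\chi_{iK} \in \PC_\Omega$, so, since $\phi \mapsto \phi_\Omega$ is a retraction onto $\langle \PC_\Omega \rangle$, we have $(\chi_{iK})_\Omega = \chi_{iK}$. If $v_i \notin \Omega$, then in the relations $\chi_{iK}(v_j) = w_j v_j w_j^{-1}$ every $w_j$ equals $1$ or $v_i$, and $pr_\Omega(v_i) = 1$; hence $(\chi_{iK})_\Omega(v_j) = v_j$ for all $j$, i.e.\ $(\chi_{iK})_\Omega$ is the identity. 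Because $\phi \mapsto \phi_\Omega$ is a homomorphism, the same dichotomy applies to $\chi_{iK}^{-1}$.

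Finally, given a word $\chi_1^{\epsilon_1} \chi_2^{\epsilon_2} \cdots \chi_k^{\epsilon_k}$ spelling $\phi$, with each $\chi_j \in \PC$ and $\epsilon_j \in \{\pm1\}$, I would apply the homomorphism $\phi \mapsto \phi_\Omega$ to both sides: the left side becomes $\phi$ by the first step, and the right side becomes $\prod_{j=1}^{k} (\chi_j)_\Omega^{\epsilon_j}$, in which, by the second step, each factor with operating letter outside $\Omega$ is trivial and each remaining factor is unchanged. Deleting the trivial factors leaves exactly the word obtained from the original by omitting the letters not lying in $\PC_\Omega^{\pm1}$, and it still spells $\phi$. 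The argument requires no genuinely hard step once Theorem~\ref{RetractionRewritingLemma} is available; the only point deserving care is the computation $(\chi_{iK})_\Omega = \mathrm{id}$ for $v_i \notin \Omega$, which rests on $pr_\Omega$ sending every vertex outside $\Omega$ to the identity.
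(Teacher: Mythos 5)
Your proposal is correct and follows essentially the paper's route: the paper treats the corollary as immediate from Theorem \ref{RetractionRewritingLemma}, with the key computation $(\chi_{iK})_\Omega = \chi_{iK}$ for $\chi_{iK} \in \PC_\Omega$ and $(\chi_{iK})_\Omega = id$ otherwise already recorded in that theorem's proof, and the observation $\phi_\Omega = \phi$ under the stated hypothesis (justified by the well-definedness of $\phi \mapsto \phi_\Omega$) is exactly the intended missing step. Applying the retraction homomorphism letter by letter to a word spelling $\phi$, as you do, is the paper's argument.
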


Next we define a subset $\PCMinus \subset \PC$ (see Definition
\ref{DefnOfQ}) such that $\AutZ$ is generated by the disjoint union of
$\PCMinus$ and the inner automorphisms. Using The Restricted Alphabet Rewriting Lemma we show that $\InnW \cap \langle \PCMinus \rangle = \{id\}$ and hence prove our next main result.

\begin{thm}\label{CSemiDirect}
If $W$ is a graph product of directly-indecomposable
cyclic groups, then $$\AutZ = \Inner \rtimes \OutZ.$$  Consequently, $\OutZ \cong \OutZQ$ and $\COutW \cong \COutWQ$.
\end{thm}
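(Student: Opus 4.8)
The plan is to prove the semidirect product decomposition $\AutZ = \Inner \rtimes \OutZ$ and then read off the two consequences. Since $\OutZ$ is by definition $\langle \PCMinus \rangle$ and $\PCMinus \sqcup \InnW$ generates $\AutZ$ (granted from the discussion preceding the theorem, together with Laurence's result that $\PC$ generates $\AutZ$), I have $\AutZ = \Inner \cdot \OutZ$ immediately. It therefore suffices to establish two things: first, that $\InnW$ is normal in $\AutZ$ (this is standard, since $\InnW$ is normal in all of $\AutW$); and second, the crucial triviality of the intersection, $\InnW \cap \langle \PCMinus \rangle = \{\mathrm{id}\}$. Granting these, the product $\Inner \cdot \OutZ$ is internal semidirect, the quotient map $\AutZ \to \AutZ/\InnW = \OutZQ$ restricts to an isomorphism $\OutZ \xrightarrow{\;\sim\;} \OutZQ$, and by Theorem~\ref{TitsSplittingRecovered} we get $\COutW = \AutZ \rtimes \QAutW = (\Inner \rtimes \OutZ) \rtimes \QAutW$, whence $\COutW/\InnW \cong \OutZ \rtimes \QAutW$; but the left side is by definition $\COutWQ$, giving $\COutW \cong \COutWQ$ once one checks the right-hand side is the subgroup generated by $\PCMinus \sqcup \QAutW$, which is exactly Definition~\ref{DefnOfQ}.

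The heart of the matter is the claim $\InnW \cap \langle \PCMinus \rangle = \{\mathrm{id}\}$, and this is where I expect to spend all the real effort. Suppose $\phi \in \langle \PCMinus \rangle$ is also inner, say $\phi = \mathrm{conj}_g$ for some $g \in W$; I must show $g$ is central, so that $\phi = \mathrm{id}$ (or, if $W$ has nontrivial center coming from isolated/universal vertices, that $\phi$ is the identity automorphism regardless). Write $\phi(v_j) = z_j v_j z_j^{-1}$ with $z_j = g$ for every $j$. The key tool is The Restricted Alphabet Rewriting Lemma (Corollary~\ref{RestrictedAlphabetRewriting}): the conjugators $z_j = g$ can be replaced by $pr_\Omega(g)$ for any full subgraph $\Omega$, so a word for $\phi$ in $\PCMinus^{\pm1}$ is unchanged by deleting all letters $\chi_{iK}$ with operating letter outside $\Omega$. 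The strategy is to choose $\Omega$ cleverly — for instance the support of a cyclically reduced form of $g$, or the link/star of a well-chosen vertex — to force a contradiction with the defining property of $\PCMinus$ (which is designed precisely so that distinct reduced words in $\PCMinus$ give distinct automorphisms, i.e. so the "obvious" relations among partial conjugations have all been quotiented away). Concretely, I would induct on the length of a $\PCMinus^{\pm1}$-word for $\phi$: if the word is empty we are done; otherwise pick an operating letter $v_i$ appearing in it, restrict to $\Omega = \Gamma \setminus \{v_i\}$ or to a component of $\Gamma \setminus S_i$ to peel off a single $\chi_{iK}$, and use that $\mathrm{conj}_g$ restricted in this way is still inner, to drive down the length while preserving innerness — ultimately contradicting minimality unless $g$ acts trivially.

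The main obstacle is bookkeeping the interaction between cyclic reduction of $g$ in $W$ and the combinatorics of which partial conjugations survive the restriction: one must argue that if $g$ is nontrivial modulo the center then some operating letter genuinely "sees" a nontrivial component, so the restricted word is nonempty and still represents an inner automorphism, contradicting the carefully-arranged freeness of reduced $\PCMinus$-words. A secondary subtlety is the precise definition of $\PCMinus$ (Definition~\ref{DefnOfQ}, not reproduced in this excerpt) — the proof will need to invoke exactly which partial conjugations are excluded and why that makes $\langle \PCMinus\rangle$ meet $\InnW$ trivially; I would structure the argument so that all use of that definition is funneled through a single lemma of the form "a nonempty reduced word in $\PCMinus^{\pm1}$ is never inner," proved by the restriction-and-induction scheme above. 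Once that lemma is in hand, the theorem and its corollaries follow formally as in the first paragraph.
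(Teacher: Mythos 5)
Your first paragraph is exactly the paper's reduction: granted that $\InnW$ is normal, that $\basicinners \sqcup \PCMinus$ generates $\AutZ$, and that $\InnW \cap \langle \PCMinus \rangle = \{id\}$, the splitting and its consequences follow formally (modulo the slip of writing $\COutW$ where you mean $\CAutW$ in the last step). But the entire content of the theorem is the intersection statement (Lemma \ref{CInnerAndQ} in the paper), and your proposal does not prove it; it defers the decisive use of Definition \ref{DefnOfQ} to an unproved lemma and a strategy that does not work as described. Two concrete problems. First, the premise that $\PCMinus$ is arranged so that ``distinct reduced words give distinct automorphisms'' is false: $\langle \PCMinus \rangle$ satisfies many relations (for instance the commutations of Lemma \ref{GeometryOfPCsLemma}), so your funnel lemma ``a nonempty reduced word in $\PCMinus^{\pm 1}$ is never inner'' is wrong if ``reduced'' is meant in the free-group sense, and is merely a restatement of the claim to be proved if ``reduced'' means ``not the identity automorphism''. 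Second, the length induction does not close: the retraction of Theorem \ref{RetractionRewritingLemma} deletes every letter whose operating letter lies outside $\Omega$, so it cannot ``peel off a single $\chi_{iK}$''; and if you restrict to $\Omega = \Gamma \setminus \{v_i\}$ or to a component of $\Gamma \setminus S_i$ and conclude by induction that the shorter word spells the identity, you have only learned that a certain subword is trivial, which gives no control over the deleted letters or over the original $\phi$. Note also that your argument never uses what distinguishes $\PCMinus$ from $\PC$, yet the statement is false for $\PC$: whenever $\Gamma \setminus S_i \neq \emptyset$, the product of the $\chi_{iK}$ over all components $K$ of $\Gamma \setminus S_i$ is a nontrivial inner automorphism. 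Some essential use of Definition \ref{DefnOfQ} is therefore unavoidable.

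The missing idea is the vertex ordering built into $\PCMinus$, recorded in Remark \ref{RemarkOnQ}(\ref{PropertyOfQ}): every $\chi_{iK} \in \PCMinus$ has $v_1 \notin K$, and more generally if $d(v_i, v_j) \leq 1$ for all $j \leq k$ then $v_{k+1} \notin K$. The paper's proof of Lemma \ref{CInnerAndQ} inducts on the vertex index, not on word length. If $\phi \in \InnW \cap \langle \PCMinus \rangle$ has conjugator $w$, then every generator appearing in a $\PCMinus^{\pm 1}$-word for $\phi$ fixes $v_1$, so $\phi(v_1) = v_1$, so $w$ centralizes $v_1$ and $w \in W(S_1)$ by Lemma \ref{CentralizerOfAVertex}; the Restricted Alphabet Rewriting Lemma with $\Omega = S_1$ then rewrites $\phi$ over letters whose operating letter lies in $S_1$; by the Remark those letters fix $v_2$, so $w \in W(S_2)$ and one restricts again, and so on. After $N$ steps the surviving letters would need operating letters adjacent to every vertex of $\Gamma$, and no partial conjugation has such an operating letter, so the word is empty and $\phi = id$. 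If you want to repair your outline, replace your choices of $\Omega$ and your induction variable by this star-by-star restriction driven by the ordering in Definition \ref{DefnOfQ}.
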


The subgroup $\InnW$ is well-understood and is itself a special subgroup of $W$ (see Lemma \ref{CenterOfW})

In $\S$\ref{ApplicationsSection} we give sufficient conditions for the splittings of Theorems \ref{TitsSplittingRecovered} and \ref{CSemiDirect}
to be compatible (Lemma \ref{WhenAreSplittingsCompatibleLemma}) and then
record an application to the theory of group extensions (Corollary \ref{ExtensionApplication} and Corollary \ref{RightAngledExtension}).

To each partial conjugation we associate a subset of $\vertices$ called the set of \emph{link points} of the partial conjugation
(see Definition \ref{LinkPointDefn}).  If $\Gamma$ is connected, then elements of $\PCMinus$ which do not share a common link point must commute
(see Lemma \ref{GeometryOfPCsLemma} and Remark \ref{Case13ImpossibleRemark}).  So, in anticipation of
studying $\OutZ$, it is natural to consider sets of partial conjugations which share a common link point.  We prove the following:

\begin{thm}\label{QiEmbedsInAutZ}
If $W$ is a graph product of directly-indecomposable
cyclic groups and $1 \leq i \leq N$ and $\PartitionSet_i \subset \PC$ is the
set of partial conjugations for which $v_i$ is a link point, then the natural restriction homomorphism
$\Restriction_i: \langle \PartitionSet_i \rangle \hookrightarrow \AutZ(L_i)$ is injective.
\end{thm}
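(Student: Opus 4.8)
The plan is to construct an explicit retraction splitting the inclusion $\Restriction_i$, or more precisely to build a left inverse to $\Restriction_i$ on the image, using the retraction machinery of Theorem \ref{RetractionRewritingLemma}. First I would unwind the definitions: an element of $\PartitionSet_i$ is a partial conjugation $\chi_{jQ}$ whose operating letter $v_j$ is adjacent to $v_i$ (that is what it means for $v_i$ to be a link point — once Definition \ref{LinkPointDefn} is in hand, a link point of $\chi_{jQ}$ is a vertex in $L_j$ that sits "behind" the conjugation in an appropriate sense), and in particular $v_j \in S_i$, so the domain $Q$ is a connected component of $\Gamma \setminus S_j$ and is disjoint from $S_i$... no, rather $Q$ meets $L_i$. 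The key structural point to extract is: for every $\chi_{jQ} \in \PartitionSet_i$, the operating letter $v_j$ lies in $L_i$, and the component $Q$ restricts to a nonempty union of components of $L_i \setminus S_j^{L_i}$. Hence $\Restriction_i$, defined by applying $pr_{L_i}$ to the conjugating elements as in the construction preceding Theorem \ref{RetractionRewritingLemma}, does land in $\AutZ(L_i)$ and sends generators to generators (or products of generators) of $\langle \PC_{L_i}\rangle$.

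The heart of the argument is injectivity. Suppose $\phi \in \langle \PartitionSet_i \rangle$ with $\Restriction_i(\phi) = \mathrm{id}$. Write $\phi$ as a word in $\PartitionSet_i^{\pm 1} \subseteq \PC^{\pm 1}$. Because every conjugating letter appearing in the defining data of $\phi$ involves only vertices of $L_i$ — here is where I must verify that the conjugating elements $w_j$ with $\phi(v_j) = w_j v_j w_j^{-1}$ can be taken in $W(L_i)$, which follows because each generator $\chi_{jQ}$ of $\phi$ conjugates only by letters $v_j \in L_i$ — Corollary \ref{RestrictedAlphabetRewriting} (the Restricted Alphabet Rewriting Lemma) applies with $\Omega = L_i$: the given word for $\phi$ already lies in $\PC_{L_i}^{\pm 1}$, so $\phi = \phi_{L_i}$. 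But $\phi_{L_i}$ is precisely (the extension to $W$ of) the data computed by $\Restriction_i$, so $\phi_{L_i} = \mathrm{id}$ forces $\phi = \mathrm{id}$. Thus the only subtlety is matching up the two "restriction" operations: the geometrically-defined $\Restriction_i : \langle \PartitionSet_i\rangle \to \AutZ(L_i)$ and the retraction $\phi \mapsto \phi_{L_i}$ of Theorem \ref{RetractionRewritingLemma}; I would state a small lemma that these agree on $\langle \PartitionSet_i \rangle$, and that $\phi_{L_i} = \mathrm{id}$ in $\AutZ$ iff $\Restriction_i(\phi) = \mathrm{id}$ in $\AutZ(L_i)$ (the latter because $W(L_i)$ embeds in $W$ and $L_i$-supported partial conjugations act faithfully on $W(L_i)$ exactly when they act faithfully on $W$).

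The step I expect to be the main obstacle is showing that the conjugating elements for $\phi$ may genuinely be chosen in $W(L_i)$ — i.e., that writing $\phi$ as a product of elements of $\PartitionSet_i$ forces $\phi(v_k) = w_k v_k w_k^{-1}$ with each $w_k \in W(L_i)$, uniformly in $k$, not just for $k$ with $v_k \in L_i$. For $v_k \notin S_i$ this should be automatic (partial conjugations with operating letter in $L_i$ fix such $v_k$ unless $v_k$ lies in the relevant component, in which case the conjugating word is built from letters of $L_i$); for $v_k \in S_i$ one must check the same, using that $\chi_{jQ} \in \PartitionSet_i$ has $v_j \in L_i$. Once that is pinned down the rest is a direct application of Corollary \ref{RestrictedAlphabetRewriting} and is essentially formal. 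A secondary, purely bookkeeping obstacle is verifying that $\Restriction_i$ is a homomorphism at all — that $\phi \mapsto \phi_{L_i}$ respects composition on this subgroup — but that is exactly the content of Theorem \ref{RetractionRewritingLemma} applied with $\Omega = L_i$, so it can be quoted rather than reproved.
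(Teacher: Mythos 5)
There is a genuine gap, and it sits exactly at the step you flag as ``essentially formal.'' Your setup is fine: every $\chi_{jQ}\in\PartitionSet_i$ has operating letter $v_j\in L_i$, so any $\phi\in\langle\PartitionSet_i\rangle$ has conjugating elements in $W(L_i)$, any word for $\phi$ in $\PC^{\pm1}$ already (or after applying Corollary \ref{RestrictedAlphabetRewriting}) lies in $\PC_{L_i}^{\pm1}$, and hence $\phi=\phi_{L_i}$; also $\Restriction_i$ is a homomorphism. But the equality $\phi=\phi_{L_i}$ is vacuous here --- it only says the retraction of Theorem \ref{RetractionRewritingLemma} fixes a word already spelled in $\PC_{L_i}$ --- and it is \emph{not} true that ``$\phi_{L_i}$ is the extension to $W$ of the data computed by $\Restriction_i$.'' The hypothesis $\Restriction_i(\phi)=\mathrm{id}$ controls $\phi$ only on $W(L_i)$: it tells you $w_k$ centralizes $v_k$ for $v_k\in L_i$, and it says nothing a priori about the conjugators $w_k$ of vertices $v_k$ outside $S_i$ in the same component, where $\phi$ could still act nontrivially (by elements of $W(L_i)$). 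Your proposed ``small lemma'' that $\phi_{L_i}=\mathrm{id}$ in $\AutZ$ iff $\Restriction_i(\phi)=\mathrm{id}$ in $\AutZ(L_i)$, justified by ``$L_i$-supported partial conjugations act faithfully on $W(L_i)$ exactly when they act faithfully on $W$,'' is precisely the theorem being proved; the argument is circular at that point.

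The missing content is the implication: if $\phi\in\langle\PartitionSet_i\rangle$ is the identity on $W(L_i)$, then $\phi(v_j)=v_j$ for every $v_j$ in the component of $v_i$ with $v_j\notin S_i$. The paper proves this by a genuinely nontrivial argument: it defines $H_i^j$ as the set of $v_\ell\in L_i$ such that every path from $L_i$ to $v_j$ passes through $S_\ell$, and shows $\supp(\phi(v_j))\setminus\{v_j\}\subseteq H_i^j$ by walking along a path $v_{j_1},\dots,v_{j_\ell}=v_j$ with $v_{j_1}\in L_i$, using that the identity-on-$W(L_i)$ hypothesis forces the conjugator at $v_{j_1}$ to be trivial, and then propagating along the path via commutation of $\phi(v_{j_{k-1}})$ and $\phi(v_{j_k})$, Lemma \ref{CentralizerOfAVertex} and the Deletion Condition. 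Only then does the retraction machinery finish the job: retracting onto $\langle\mathcal{H}_i^j\rangle$ preserves $\phi(v_j)$, and every element of $\langle\mathcal{H}_i^j\rangle$ fixes $v_j$ because $v_j$ lies outside the relevant domains, whence $\phi(v_j)=v_j$. Without some version of this path/support analysis (or another argument pinning down the conjugators of vertices outside $S_i$), your proof does not go through.
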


We now consider $\OutZ$.  We give a simple graph criterion which characterizes when $\OutZ$ is abelian.

\begin{defn}[SIL]\label{SILDefn}
We say that $\Gamma$ contains a \emph{separating intersection of links} (SIL) if there exist $1 \leq i < j \leq N$ such that
the following conditions hold:
\begin{enumerate}
\item $d(v_i, v_j) \geq 2$;
\item there exists a connected component $R$ of $\Gamma \setminus (L_i \cap L_j)$ such that $v_i, v_j \not \in R$.
\end{enumerate}
\end{defn}

\begin{thm}\label{OutZAbelianIffSIL}
If $W$ is a graph product of directly-indecomposable
cyclic groups, then the following are equivalent:
\begin{enumerate}
\item $\OutZ$ is an abelian group;
\item $\Gamma$ does not contain a SIL.
\end{enumerate}
\end{thm}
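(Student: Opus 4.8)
The plan is to characterize, for partial conjugations $\chi_{iK}$ and $\chi_{jL}$, exactly when $[\chi_{iK},\chi_{jL}]$ lies in $\InnW$. Since $\AutZ$ is generated by $\PC$ (Laurence) and $\OutZ\cong\OutZQ=\AutZ/\InnW$ (Theorem~\ref{CSemiDirect}), $\OutZ$ is abelian if and only if $[\chi_{iK},\chi_{jL}]\in\InnW$ for every pair in $\PC$. A short computation on generators shows that, provided $v_j\notin K$ and $v_i\notin L$, the automorphism $\psi:=[\chi_{iK},\chi_{jL}]$ fixes every $v_k\notin K\cap L$ and conjugates every $v_k\in K\cap L$ by $c:=v_j^{-1}v_i^{-1}v_jv_i$; in particular $\psi=\mathrm{id}$ whenever $K\cap L=\emptyset$ or $v_i,v_j$ are equal or adjacent. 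To pass to an arbitrary pair I would first reduce to this case: modulo $\InnW$ the product $\prod_K\chi_{iK}$ over \emph{all} components $K$ of $\Gamma\setminus S_i$ equals the inner automorphism that conjugates by $v_i$, and these factors (for fixed $i$) pairwise commute, so any $\chi_{iK}$ with $v_j\in K$ can be rewritten modulo $\InnW$ as a product of partial conjugations whose domains omit $v_j$. Performing this on each of the two operating letters shows that $\OutZ$ fails to be abelian precisely when there is a \emph{bad configuration}: distinct non-adjacent $v_i,v_j$ together with $\chi_{iK},\chi_{jL}\in\PC$ satisfying $v_j\notin K$, $v_i\notin L$ and $K\cap L\ne\emptyset$.

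Two things then remain: (a) in a bad configuration $\psi$ is not inner; (b) bad configurations exist if and only if $\Gamma$ contains a SIL. For (a): if $\psi=\gamma_g$ for some $g\in W$, then $g$ centralizes every $v_k\notin K\cap L$, in particular $v_i$ and $v_j$, so — using the standard facts that $C_W(v_k)=W(S_k)$ and that an intersection of special subgroups is special — $g$ lies in a special subgroup whose defining vertex set avoids $v_i$ and $v_j$; meanwhile $c^{-1}g$ centralizes any chosen $v_{k_0}\in K\cap L$, so $c^{-1}g\in W(S_{k_0})$ with $v_i,v_j\notin S_{k_0}$ (because $v_{k_0}\notin S_i\cup S_j$). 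The retraction onto $W(\{v_i,v_j\})$ — a free product since $v_i,v_j$ are non-adjacent — then kills both $g$ and $c^{-1}g$, hence fixes $c^{-1}$, giving $c^{-1}=1$; but $c$ is a nontrivial reduced word, a contradiction.

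For (b), the reverse implication is routine: from a SIL with separating component $R$ for $v_i,v_j$ one checks $R\not\subseteq S_i\cup S_j$ (otherwise $R\subseteq L_i\triangle L_j$, yet every vertex there is joined in $\Gamma\setminus(L_i\cap L_j)$ to $v_i$ or to $v_j$, forcing $v_i\in R$ or $v_j\in R$), picks $v_k\in R\setminus(S_i\cup S_j)$, and takes $K,L$ to be its components in $\Gamma\setminus S_i$, $\Gamma\setminus S_j$; then $v_j\notin K$ and $v_i\notin L$, since a connecting path would otherwise drag $v_j$ or $v_i$ into $R$. The forward implication is the heart of the matter. Beginning from a bad configuration, note that $L$ cannot meet $L_i$: a vertex of $L$ adjacent to $v_i$ would put $v_i\in L$. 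Hence $L$ lies in one component of $\Gamma\setminus L_i$, which must be $K$ (both contain $v_k$); symmetrically $K\subseteq L$, so $K=L=:C$. The same disjointness argument shows that every vertex outside $C$ adjacent to $C$ belongs to $L_i\cap L_j$, so $C$ is a union of components of $\Gamma\setminus(L_i\cap L_j)$; the component of $v_k$ contains neither $v_i$ nor $v_j$, and $d(v_i,v_j)\ge 2$, so it exhibits a SIL. I expect this last point — that a bad configuration forces the two domains to coincide in a set separated off by $L_i\cap L_j$ — to be the main obstacle; the commutator computation and the $\InnW$-reductions are routine provided one is careful with normal forms in graph products.
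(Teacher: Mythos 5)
Your proposal is correct, but it takes a genuinely different route from the paper's. The paper never argues modulo $\InnW$ with the full generating set $\PC$: it fixes the particular set $\PCMinus$ generating the subgroup $\OutZ$ and checks \emph{literal} commutation of its elements, using the thirteen-case interaction lemma for two partial conjugations (Lemma \ref{GeometryOfPCsLemma}), the identification of common components of $\Gamma \setminus S_i$ and $\Gamma \setminus S_j$ with components of $\Gamma \setminus (L_i \cap L_j)$ missing $v_i, v_j$ (Lemma \ref{CCofIndividualAndCombined}), the covering statement $\Gamma = K_j \cup Q_i \cup (L_i \cap L_j)$ in the SIL-free case (Lemma \ref{KQLinkEverythingLemma}), and a reduction to connected $\Gamma$ via the coned-off graph (Remark \ref{WLOGAssumeConnected}, Lemma \ref{SILPlusIFFSIL}); in the SIL direction it only has to exhibit two explicit elements of $\PCMinus$ that fail to commute as automorphisms. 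You instead pass to $\AutZ/\InnW \cong \OutZ$ (legitimate by Theorem \ref{CSemiDirect}), compute $[\chi_{iK},\chi_{jL}]$ when $v_j \notin K$ and $v_i \notin L$ (conjugation of $K \cap L$ by a commutator of $v_i, v_j$, identity elsewhere -- this computation checks out), reduce arbitrary pairs to that case by rewriting $\chi_{iK}$ modulo inner as $\prod_{K' \neq K} \chi_{iK'}^{-1}$, and then must do a piece of work the paper's route avoids entirely: proving that the commutator arising from a ``bad configuration'' is not inner. Your retraction onto the free product $W(\{v_i,v_j\})$ handles this cleanly; note you do not even need that intersections of special subgroups are special, since $g \in W(S_i) \cap W(S_j)$ already forces $pr_{\{v_i,v_j\}}(g)$ to be simultaneously a power of $v_i$ and a power of $v_j$, hence trivial. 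Your equivalence ``bad configuration iff SIL'' is the counterpart of Lemma \ref{CCofIndividualAndCombined}, and the graph arguments you sketch for both implications are sound (including the disconnected case, so you need no connectedness reduction). In short: your approach trades the paper's case analysis and its dependence on the ordering used to define $\PCMinus$ for a non-innerness criterion; the paper's approach never needs such a criterion because inside $\langle \PCMinus \rangle$ commutation must hold on the nose, and its auxiliary lemmas are reused elsewhere (e.g.\ in Corollary \ref{OutWInfiniteIffSIL} and the tree case).
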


In the case that $W$ is a graph product of primary cyclic groups, this
also characterizes when $\OutW$ is finite.

\begin{cor}\label{OutWInfiniteIffSIL}
If $W$ is a graph product of primary cyclic groups, then the following are equivalent:
\begin{enumerate}
\item $\OutZ$ is an abelian group;
\item $\Gamma$ does not contain a SIL;
\item $\OutW$ is finite.
\end{enumerate}
\end{cor}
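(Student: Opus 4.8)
The plan is to derive this from Theorem \ref{OutZAbelianIffSIL} by observing that for a graph product of primary cyclic groups the only remaining piece of $\OutW$ beyond $\OutZ$ is the finite group $\QAutW$. Concretely, Lemma \ref{WhenGAutWEqualsAutW}(\ref{GPPCGCondition}) gives $\CAutW = \AutW$, Theorem \ref{TitsSplittingRecovered} gives $\AutW = \AutZ \rtimes \QAutW$, and Theorem \ref{CSemiDirect} gives $\OutZ \cong \AutZ/\InnW$ together with $\OutW = \CAutW/\InnW \cong (\AutZ/\InnW)\rtimes \QAutW$. By the remark following Theorem \ref{TitsSplittingRecovered}, $\QAutW$ embeds in $\Aut W_{\ab}$, and when every $\ordermap(i)$ is finite $W_{\ab}$ is a finite abelian group, so $\QAutW$ is finite. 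Hence $\OutW$ is finite if and only if $\OutZ$ is finite. The equivalence (1)$\Leftrightarrow$(2) is exactly Theorem \ref{OutZAbelianIffSIL}, so it remains to prove (1)$\Leftrightarrow$(3), i.e. that $\OutZ$ is abelian iff $\OutZ$ is finite; equivalently, that if $\OutZ$ is non-abelian then it is infinite.

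First I would dispose of the easy direction: if $\OutZ$ is finite then, being generated by $\PCMinus$, it is a finite group; I want to argue it must then be abelian, which I would do via the SIL criterion — if $\Gamma$ contains a SIL one exhibits two partial conjugations in $\langle\PCMinus\rangle$ whose commutator has infinite order in $\OutZ$, forcing $\OutZ$ infinite. So the real content is: \emph{a SIL produces an element of infinite order in $\OutZ$}. Given a SIL at $v_i,v_j$ with separating component $R$ of $\Gamma\setminus(L_i\cap L_j)$, let $\chi_{iR}$ and $\chi_{jR}$ be the partial conjugations with operating letters $v_i$, $v_j$ and domain $R$ (after checking $R$ is indeed a full connected component of $\Gamma\setminus S_i$ and of $\Gamma\setminus S_j$ — this uses $d(v_i,v_j)\ge 2$ so $v_j\notin L_i\cup\{v_i\}$, hence $R$ sits inside a single component of $\Gamma\setminus S_i$, and symmetrically). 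These do not commute because conjugating the letters of $R$ first by $v_i$ and then by $v_j$ is not the same as doing it in the other order, since $v_i$ and $v_j$ are non-adjacent. I would then show that in $\OutZ$ the subgroup generated by $\chi_{iR}$ and $\chi_{jR}$ is non-abelian, and more: iterating the commutator builds longer and longer conjugating words on $R$ (words alternating in $v_i^{\pm1}$ and $v_j^{\pm1}$, which are cyclically reduced and hence non-trivial in $W$ modulo the center), so no power of $[\chi_{iR},\chi_{jR}]$ is inner; this gives an element of infinite order in $\OutZ$.

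The main obstacle I anticipate is the careful bookkeeping showing the relevant partial conjugations genuinely lie in $\langle\PCMinus\rangle$ and that the commutator, as an element of $\AutZ$, is not inner — that is, controlling which conjugating automorphisms are trivial in $\OutZ$. This is precisely where The Restricted Alphabet Rewriting Lemma (Corollary \ref{RestrictedAlphabetRewriting}) and the analysis of link points (Lemma \ref{GeometryOfPCsLemma}, Theorem \ref{QiEmbedsInAutZ}) do the work: restricting to the subgraph $\Omega = R\cup(L_i\cap L_j)$, or to $L_i\cap L_j$, lets me compute the commutator on a small alphabet and compare it against $\InnW$, which by Lemma \ref{CenterOfW} is an explicitly understood special subgroup. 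Once the commutator is shown to act on some vertex of $R$ by conjugation by an element of $W(\{v_i,v_j\})$ of infinite order modulo the center, while inner automorphisms would have to conjugate \emph{every} vertex compatibly by a single element, the non-triviality of all its powers in $\OutZ$ follows. Assembling these pieces yields (2)$\Rightarrow$($\OutZ$ infinite)$\Rightarrow$(3 fails is impossible)… more cleanly: $\neg$(2)$\Rightarrow$$\OutZ$ infinite $\Rightarrow$ $\neg$(1), and conversely (1)$\Leftrightarrow$(2) from Theorem \ref{OutZAbelianIffSIL} and (1)$\Rightarrow$(3) since an abelian $\OutZ$ generated by the finite set $\PCMinus$ of finite-order elements is finite, while (3)$\Rightarrow$(1) because $\OutZ$ non-abelian forces a SIL hence an infinite-order element, contradicting finiteness.
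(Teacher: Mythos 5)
Your plan is correct and follows essentially the same route as the paper: Theorem \ref{OutZAbelianIffSIL} gives (1)$\Leftrightarrow$(2), finiteness of $\QAutW$ together with the finite order of partial conjugations gives (1)$\Rightarrow$(3), and a SIL yields an automorphism built from $\chi_{iR}$ and $\chi_{jR}$ that fixes $v_i$ while conjugating a vertex $v_r \in R$ by ever-longer words in $v_i, v_j$, so no power is inner and $\OutW$, $\OutZ$ are infinite. The paper simply uses the product $\chi_{iR}\chi_{jR}$ (computing $(\chi_{iR}\chi_{jR})^n(v_r)=(v_jv_i)^n v_r (v_jv_i)^{-n}$) in place of your commutator, and it bypasses your concern about membership in $\langle \PCMinus \rangle$ since it is enough to work in $\AutZ$ modulo $\InnW$.
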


In $\S$\ref{ApplicationsSection} we describe a number of applications of Corollary \ref{OutWInfiniteIffSIL} to the study of the geometry of $W$ and $\AutW$.
We examine the combined effect of the existence (or absence) of an SIL and certain
other graph properties which are known to determine geometric properties of $W$ such as word-hyperbolicity (Corollary \ref{OutWInfiniteMeansNotHyperbolic}),
the isolated flats property
(Corollary \ref{IsolatedFlatsAndOutWInfinite}) and whether or not $W$ can act on a CAT(0) space with locally-connected visual boundary
(Corollary \ref{OutWInfiniteMeansNLC}).  We also characterize when
$\AutW$ is word hyperbolic (Corollary \ref{AutWHyperbolic}).

In the special case that $\Gamma$ is a tree, our study of $\OutZ$ can proceed much further.  In this case, each partial conjugation has a unique link
point, the subsets $\PartitionSet_i^0 := \PartitionSet_i \cap \PCMinus$
partition $\PCMinus$ and the partition corresponds to a direct product decomposition of $\OutZ$.  Since each
$W(L_i)$ is a free product of cyclic groups and each image $\Restriction_i(\langle \PartitionSet_i^0 \rangle)$
is easily understood (see Proposition \ref{QiInCaseGammaATree}), we are able to give a complete description of $\OutZ$.
\begin{thm}\label{NiceStructureOfOutZForTree}
If $W$ is a graph product of directly-indecomposable
cyclic groups and $\Gamma$ is a tree, then
$$\OutZ \cong \Ab \times \Bigl(\prod_{i=1}^N \OutZ(L_i)\Bigr)$$ for a finitely-generated abelian
group $\Ab$ as described in Remark \ref{PresentingCAutWRemark}. In particular:
\begin{enumerate}
\item if $W$ is a right-angled Artin group, then $\Ab$ is
a free abelian group;
\item if $W$ is a graph product of primary cyclic groups, then $\Ab$ is a finite abelian group.
\end{enumerate}
\end{thm}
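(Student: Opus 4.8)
The plan is as follows. Since $\Gamma$ is a tree, every partial conjugation has a unique link point, so the sets $\PartitionSet_i^0 = \PartitionSet_i \cap \PCMinus$ genuinely partition $\PCMinus$, and by Lemma~\ref{GeometryOfPCsLemma} and Remark~\ref{Case13ImpossibleRemark} two elements of $\PCMinus$ with different link points commute. Hence the subgroups $\langle \PartitionSet_1^0 \rangle, \dots, \langle \PartitionSet_N^0 \rangle$ of $\OutZ = \langle \PCMinus \rangle$ commute elementwise in pairs, and multiplication defines a surjective homomorphism $\mu\colon \prod_{i=1}^N \langle \PartitionSet_i^0 \rangle \to \OutZ$. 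I would first prove that $\mu$ is an isomorphism, and then describe each factor $\langle \PartitionSet_i^0 \rangle$.

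To prove $\mu$ injective I would construct, for each $i$, a homomorphism $\Theta_i \colon \AutZ \to \AutZ(L_i)$ which agrees with the embedding $\Restriction_i$ of Theorem~\ref{QiEmbedsInAutZ} on $\langle \PartitionSet_i^0 \rangle$ and annihilates $\langle \PartitionSet_j^0 \rangle$ for every $j \neq i$; applying $\Theta_i$ to a relation $\phi_1 \cdots \phi_N = \mathrm{id}$ with $\phi_k \in \langle \PartitionSet_k^0 \rangle$ then forces $\phi_i = \mathrm{id}$ for each $i$. Take $\Theta_i$ to be the retraction $\phi \mapsto \phi_{L_i}$ of Theorem~\ref{RetractionRewritingLemma} (onto $\langle \PC_{L_i} \rangle$) followed by restriction to $W(L_i)$; the last step is legitimate because every generator $\chi_{kQ}$ of $\langle \PC_{L_i} \rangle$ has operating letter in $L_i$ and hence carries $W(L_i)$ isomorphically onto itself, restricting there to a conjugating automorphism, so the image lies in $\AutZ(L_i)$. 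Since $\PartitionSet_i^0 \subseteq \PartitionSet_i \subseteq \PC_{L_i}$ (for $\Gamma$ a tree the operating letter of a partial conjugation with link point $v_i$ is a neighbour of $v_i$), the retraction is the identity on $\langle \PartitionSet_i^0 \rangle$, so $\Theta_i$ restricts there to $\Restriction_i$, which is injective. On the other hand, a direct check with the tree structure and the definition of link points shows that $\Theta_i$ kills every element of $\PartitionSet_j$ with $j \neq i$: if its operating letter lies outside $L_i$ the retraction already kills it, and if it lies in $L_i$ then its domain, being a component of $\Gamma \setminus S_k$ attached to $v_j \neq v_i$, is disjoint from $L_i$, so the partial conjugation restricts trivially to $W(L_i)$. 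Thus $\mu$ is an isomorphism and $\OutZ \cong \prod_{i=1}^N \langle \PartitionSet_i^0 \rangle$.

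It remains to analyse $\langle \PartitionSet_i^0 \rangle$. As $\Gamma$ is a tree, $L_i$ is an independent set, so $W(L_i)$ is a free product of directly-indecomposable cyclic groups, and Theorem~\ref{CSemiDirect} applied to $W(L_i)$ gives $\AutZ(L_i) = \Inn W(L_i) \rtimes \OutZ(L_i)$. By Theorem~\ref{QiEmbedsInAutZ}, $\Restriction_i$ identifies $\langle \PartitionSet_i^0 \rangle$ with a subgroup of $\AutZ(L_i)$, and Proposition~\ref{QiInCaseGammaATree} describes this subgroup explicitly; because $W(L_i)$ is a free product of cyclic groups this description is completely transparent. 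From it one reads off that the composite of $\Restriction_i$ with the projection $\AutZ(L_i) \to \OutZ(L_i)$ is surjective with kernel a finitely-generated abelian group $\Ab_i$, and that the corresponding extension splits as a direct product, so $\langle \PartitionSet_i^0 \rangle \cong \Ab_i \times \OutZ(L_i)$; moreover $\Ab_i$ is free abelian when $W$ is a right-angled Artin group and finite abelian when $W$ is a graph product of primary cyclic groups. Setting $\Ab := \prod_{i=1}^N \Ab_i$ (a finitely-generated abelian group, presented as in Remark~\ref{PresentingCAutWRemark}) and rearranging factors then yields $\OutZ \cong \Ab \times \prod_{i=1}^N \OutZ(L_i)$ together with the two special cases.

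I expect the main obstacle to be the injectivity of $\mu$: the whole decomposition rests on verifying that the homomorphisms $\Theta_i$ simultaneously agree with $\Restriction_i$ on the $i$-th factor and annihilate the other factors, and it is exactly here — through the combinatorics of link points in a tree and the interplay of Theorem~\ref{RetractionRewritingLemma} with Theorem~\ref{QiEmbedsInAutZ} — that the hypothesis that $\Gamma$ is a tree is used in an essential way. The identification of the individual factors, by contrast, is reduced by Proposition~\ref{QiInCaseGammaATree} to understanding the conjugating automorphisms of a free product of cyclic groups.
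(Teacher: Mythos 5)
Your proposal is correct and takes essentially the same route as the paper: partition $\PCMinus$ by the (unique, since $\Gamma$ is a tree) link points, prove that generators in distinct parts commute (the paper's Lemma \ref{LisCommuteInTree}, giving Corollary \ref{DirectProductStructureForTree}), and identify each factor $\langle \PartitionSet_i^0 \rangle$ via the injective restriction to $\AutZ(L_i)$ exactly as in Proposition \ref{QiInCaseGammaATree}. The only real difference is that you make the directness of the decomposition explicit through the homomorphisms $\Theta_i$ (the retraction onto $\langle \PC_{L_i} \rangle$ from Theorem \ref{RetractionRewritingLemma} followed by restriction to $W(L_i)$), a verification the paper treats as immediate in Corollary \ref{DirectProductStructureForTree}; this is a useful added detail but not a different method.
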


We apply Theorem \ref{NiceStructureOfOutZForTree} to determine a finite presentation for $\CAutW$ (Remark \ref{PresentingCAutWRemark}),
to calculate the virtual cohomological dimension of $\OutW$ in the case that $W$ is a graph product of primary cyclic groups (Corollary \ref{OutWVTFandFVCD}) and to
prove the existence of regular languages of normal forms for $\OutZ$ and $\AutZ$ in the case that $W$ is a right-angled Artin group (Corollary \ref{RLofNF}).

We mention that Laurence \cite{LaurenceThesis} and M{\"u}hlherr \cite{Bernie} independently determined finite presentations for $\AutZ$
in the case that $W$ is a right-angled Coxeter group. Castella \cite{Castella} determined a finite presentation of $\QAutW$ for a certain subclass of
right-angled Coxeter groups.  Proving a conjecture of Servatius \cite{Servatius}, Laurence \cite{Laurence}
determined a generating set for $\AutW$ in the case that $W$ is a right-angled Artin
group---a generating set for $\CAutW$ can be deduced from this list.  To the best of the authors' knowledge, Laurence's unpublished Ph.D.
Thesis \cite{LaurenceThesis} is the only previous work to consider
the automorphisms of graph products of abelian groups in a unified way.

We now briefly describe the structure of the present article.  We attend to some preliminaries in $\S$\ref{Preliminaries}.  Theorem
\ref{TitsSplittingRecovered} is the topic of $\S$\ref{TitsSplittingSection}.   Theorem \ref{RetractionRewritingLemma} is proved in $\S$\ref{RestrictedAlphabetRewritingSection}.
The Restricted Alphabet Rewriting Lemma is then used to prove Theorem \ref{CSemiDirect} in $\S$\ref{SplittingAutZSection}.
Theorem \ref{QiEmbedsInAutZ} is proved in $\S$\ref{ConnectedSubSection}.  Theorem \ref{OutZAbelianIffSIL} and Corollary \ref{OutWInfiniteIffSIL}
are proved in $\S$\ref{CardinalityOfOutZ}.
Theorem \ref{NiceStructureOfOutZForTree} is proved
in $\S$\ref{TreeSection}.  We describe a number of applications of our results in $\S$\ref{ApplicationsSection}.  In $\S$\ref{ExampleSection} we illustrate a number of definitions and results by following an example.
Each section is prefaced by a short description of its contents.

\section{Preliminaries}\label{Preliminaries}

In this section we establish notation and remind the reader of some
fundamental results concerning graph products of groups.  We have stated the results only for the class of graph products of directly-indecomposable
cyclic groups.  They appear, in more general form, in Elisabeth Green's Ph.D. Thesis \cite{GreenThesis}
and Michael Laurence's Ph.D. Thesis \cite{LaurenceThesis}, proved by different methods.

By a subgraph of $\Gamma$ we shall always mean a
full subgraph.  Thus a subgraph $\Delta = (\vertices_\Delta,
\edges_\Delta)$ is determined by a subset $\vertices_\Delta
\subseteq \vertices$ and the rule
$\edges_\Delta = \{\{v_i, v_j\} \in \edges \; | \; v_i, v_j \in \vertices_\Delta\}.$

We remind the reader that, for each $1 \leq i \leq N$, we write $L_i$ (resp, $S_i$) for the \emph{link} (resp. \emph{star}) of $v_i$.  That is,
$L_i$ (resp. $S_i$) is the subgraph of $\Gamma$ generated by the vertices $\{v_j \in \vertices \; | \; d(v_i, v_j) = 1\}$ (resp.
$\{v_j \in \vertices \; | \; d(v_i, v_j) \leq 1\}$).

\begin{defn}
Let $1 \leq i_1, \dots, i_k \leq N$ be such that $i_{j} \neq i_{j+1}$ and let $\alpha_1, \dots, \alpha_k$ be non-zero integers.
Each $v_{i_j}^{\alpha_j}$ is a \emph{syllable} of the word
$v_{i_1}^{\alpha_1} v_{i_2}^{\alpha_2} \dots v_{i_k}^{\alpha_k},$
and we say that the word is \emph{reduced} if there is no word with fewer syllables which spells the same element of $W$.  We say that consecutive syllables
$v_{i_j}^{\alpha_j}$, $v_{i_{j+1}}^{\alpha_{j+1}}$ are \emph{adjacent} if $v_{i_j}$ and $v_{i_{j+1}}$ are.
\end{defn}

\begin{lem}[The Deletion Condition]\label{DeletionCondition}
Let  $1 \leq i_1, \dots, i_k \leq N$ be such that $i_{j} \neq i_{j+1}$ and let $\alpha_1, \dots, \alpha_k$ be non-zero integers.
If the word $v_{i_1}^{\alpha_1} v_{i_2}^{\alpha_2} \dots v_{i_k}^{\alpha_k}$ is not reduced, then there exist $p, q$ such that $1 \leq p < q \leq k$,
$v_{i_p} = v_{i_q}$ and $v_{i_p}$ is adjacent to each vertex $v_{i_{p+1}}, v_{i_{p+2}}, \dots, v_{i_{q-1}}$.
\end{lem}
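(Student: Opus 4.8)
The plan is to recognize the Deletion Condition as the combinatorial shadow of Green's normal form theorem for graph products, and to prove it by passing through an intermediate notion of "combinatorially reduced" word. Introduce two elementary moves on syllable-words representing elements of $W$: a \emph{shuffle}, which interchanges two consecutive \emph{adjacent} syllables $v_i^a v_j^b \rightsquigarrow v_j^b v_i^a$ (legal because $v_i$ and $v_j$ commute), and a \emph{merge}, which replaces two consecutive syllables with the same vertex, $v_i^a v_i^b$ (a configuration that can arise after shuffling, though not in the original word), by the single syllable $v_i^{a+b}$, deleting it altogether when $\ordermap(i) \mid a+b$. Each move preserves the element of $W$ represented; shuffles preserve the number of syllables while merges strictly decrease it. Call a word \emph{combinatorially reduced} if no word obtained from it by a (possibly empty) sequence of shuffles admits a merge.

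First I would establish the equivalence: a word is reduced in the sense of the Definition (minimal syllable length) if and only if it is combinatorially reduced. The implication "combinatorially reducible $\Rightarrow$ not minimal" is immediate, since a shuffle-sequence exposing a mergeable pair, followed by one merge, yields a strictly shorter word for the same element; this is the direction the Lemma actually needs (in contrapositive form). The converse, "combinatorially reduced $\Rightarrow$ minimal length," is precisely Green's normal form theorem; I would either cite \cite{GreenThesis, LaurenceThesis} directly or reprove it by induction on $N = \abs{\vertices}$, using the amalgamated free product splitting $W \cong W(\Gamma \setminus v_N) *_{W(L_N)} W(S_N)$ with $W(S_N) = W(L_N) \times \langle v_N \rangle$, together with the normal form theorem for free products with amalgamation and the inductive fact that special subgroups embed.

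Granting this equivalence, the Lemma reduces to a purely combinatorial fact about shuffle-equivalence classes (the Cartier--Foata theory of trace monoids): the word $v_{i_1}^{\alpha_1}\cdots v_{i_k}^{\alpha_k}$ fails to be combinatorially reduced if and only if there exist $p<q$ with $v_{i_p}=v_{i_q}$ and $v_{i_p}$ adjacent to each of $v_{i_{p+1}},\dots,v_{i_{q-1}}$. The backward implication is explicit: shuffle the $q$-th syllable leftwards one step at a time past syllables $q-1, q-2, \dots, p+1$ --- each transposition is legal because $v_{i_q}=v_{i_p}$ commutes with the intermediate generator --- until it becomes consecutive with the $p$-th syllable, then merge. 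For the forward implication, note that along any shuffle-sequence the relative order of the descendants of two fixed syllables can change only via a legal transposition of them; hence if the descendants of syllables $p$ and $q$ (with $v_{i_p}=v_{i_q}$) ever become consecutive, every syllable strictly between them in the original word must at some stage have been transposed past one of the two, and so must commute with $v_{i_p}$. Choosing $p<q$ minimal with $v_{i_p}=v_{i_q}$ among such exposed pairs then yields exactly the stated configuration.

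The main obstacle is the "combinatorially reduced $\Rightarrow$ minimal length" step, i.e. the normal form theorem itself: the bookkeeping in the amalgam induction --- in particular tracking when a $v_N$-syllable can be absorbed into the $W(S_N)$-factor, commuted through its $W(L_N)$-component, and cancelled against another $v_N$-syllable, which is precisely the mechanism that forces the link-adjacency hypothesis --- is the one genuinely delicate point; everything else is routine trace-monoid combinatorics, and it is entirely reasonable to discharge this step by invoking \cite{GreenThesis}.
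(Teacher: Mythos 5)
The paper itself offers no argument here: Lemma \ref{DeletionCondition} is stated in \S\ref{Preliminaries} as one of the standard facts about graph products, quoted from Green's and Laurence's theses, so there is no in-paper proof to compare yours against. Your proposal supplies an actual proof, and its structure is sound. The combinatorial half is correct and cleanly argued: the relative order of two fixed syllables can change only when they are directly transposed, two syllables carrying the same vertex can never be legally shuffled past one another (so an exposed mergeable pair keeps its original order $p<q$), and hence every syllable originally strictly between positions $p$ and $q$ must at some stage have been transposed directly past one of the two, forcing its vertex to be adjacent to $v_{i_p}=v_{i_q}$; your ``choose $p<q$ minimal'' remark is unnecessary but harmless. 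The one slip is in your bookkeeping of which half of the equivalence the Lemma uses: you label ``combinatorially reducible $\Rightarrow$ not minimal'' as the direction the Lemma needs, but the Lemma's hypothesis is ``not minimal'' and its conclusion (via your characterization) is ``combinatorially reducible'', so the load-bearing implication is precisely the hard one, ``combinatorially reduced $\Rightarrow$ minimal'', i.e.\ Green's normal form theorem. Since your plan does include that implication --- discharged by citing \cite{GreenThesis} or re-derived by the amalgam induction $W \cong W(\Gamma\setminus v_N) \ast_{W(L_N)} \bigl(W(L_N)\times\langle v_N\rangle\bigr)$ --- the overall argument is complete, and leaning on the citation there is entirely in the spirit of the paper, which does exactly that for the whole lemma; just be aware that the citation, not the ``immediate'' direction, is what the Lemma rests on. (A pedantic point inherited from the paper's statement: the exponents $\alpha_j$ should be read as non-zero modulo $\ordermap(i_j)$, since a single trivial syllable such as $v_i^{\ordermap(i)}$ gives a non-reduced word admitting no pair $p<q$.)
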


\begin{lem}[Normal Form]\label{EquivalenceOfReducedWords}
Let  $1 \leq i_1, \dots, i_k, j_1, \dots, j_k \leq N$ and
$\alpha_1, \dots,$ $\alpha_k, \beta_1, \beta_2, \dots, \beta_k \in \Integer \setminus \{0\}$.  If
$v_{i_1}^{\alpha_1} v_{i_2}^{\alpha_2} \dots v_{i_k}^{\alpha_k}$
and
$w_{j_1}^{\beta_1} w_{j_2}^{\beta_2} \dots w_{j_k}^{\beta_k}$
are reduced words which spell the same element of $W$, then the first word may be transformed into the second by repeatedly
swapping the order of adjacent syllables.
\end{lem}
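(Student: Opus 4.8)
The plan is to argue by induction on the common syllable-length $k$, with the Deletion Condition (Lemma~\ref{DeletionCondition}) as the only non-elementary input. Two preliminary observations make the induction run. First, the definition of \emph{reduced} says exactly that no shorter word spells the same element, so all reduced words for a fixed $g \in W$ have the same number of syllables; thus $k$ is a well-defined invariant of $g$. Second, swapping two adjacent (hence commuting) syllables is a syllable-length--preserving operation on words spelling a fixed element, so it carries reduced words to reduced words. I will also assume, harmlessly, that each exponent has been replaced by its representative modulo $\ordermap(i)$ when that order is finite: this changes neither word as an element of $W$, and it makes ``$\alpha \equiv \beta \pmod{\ordermap(i)}$'' mean ``$\alpha = \beta$'', which is what the conclusion asks for. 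The base cases $k=0$ (both words empty) and $k=1$ are immediate: if $v_{i_1}^{\alpha_1} = w_{j_1}^{\beta_1}$ with $v_{i_1}\neq w_{j_1}$, then the two-syllable word $v_{i_1}^{\alpha_1}w_{j_1}^{-\beta_1}$ spells the identity and so is not reduced, and the Deletion Condition forces $v_{i_1} = w_{j_1}$, a contradiction; hence $i_1 = j_1$ and $\alpha_1 \equiv \beta_1 \pmod{\ordermap(i_1)}$.

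For the inductive step, write $A = v_{i_1}^{\alpha_1}\cdots v_{i_k}^{\alpha_k}$ and $B = w_{j_1}^{\beta_1}\cdots w_{j_k}^{\beta_k}$ for the two reduced words. The word $v_{i_1}^{-\alpha_1}B$ spells $v_{i_2}^{\alpha_2}\cdots v_{i_k}^{\alpha_k}$, an element of syllable-length at most $k-1$; so (after combining its first two syllables in the easy special case $w_{j_1} = v_{i_1}$) it is a word of length $k+1$ that is not reduced. The Deletion Condition applied to it yields indices $p < q$ whose vertices coincide, the vertex at $p$ being adjacent to every vertex strictly between. A reduced word admits no such pair: shuffling the $q$-th syllable leftward past the intervening commuting syllables and combining it with the $p$-th syllable strictly decreases the syllable count. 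Hence $p$ must be the position of the newly prepended syllable $v_{i_1}^{-\alpha_1}$, which means there is a syllable $w_{j_m}^{\beta_m}$ of $B$ with $w_{j_m} = v_{i_1}$ and $v_{i_1}$ adjacent to each of $w_{j_1},\dots,w_{j_{m-1}}$. Shuffling that syllable to the front (by adjacent swaps, preserving reducedness) rewrites $B$ as $v_{i_1}^{\beta_m}\cdot b$ with $b$ reduced of length $k-1$ and not beginning with $v_{i_1}$.

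Now $v_{i_1}^{\alpha_1}a = v_{i_1}^{\beta_m}b$, where $a = v_{i_2}^{\alpha_2}\cdots v_{i_k}^{\alpha_k}$ is also reduced of length $k-1$ and does not begin with $v_{i_1}$. The crux is to deduce $\alpha_1 = \beta_m$. Here one uses that $A$ reduced forces $a$ to contain no $v_{i_1}$-syllable that can be shuffled to the front of $a$, since such a syllable together with the leading $v_{i_1}^{\alpha_1}$ of $A$ would violate reducedness of $A$. If $\alpha_1 \neq \beta_m$, then from $a = v_{i_1}^{\beta_m - \alpha_1}b$ the word $v_{i_1}^{\beta_m-\alpha_1}b$ has $k$ syllables yet spells the length-$(k-1)$ element $a$, so it is not reduced; the Deletion Condition together with the same left-most argument produces a $v_{i_1}$-syllable of $b$ shuffle-able to the front of $b$, and pushing it through gives $a = v_{i_1}^{\gamma}b'$ with $b'$ reduced of length $k-2$. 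If $\gamma \neq 0$ this contradicts the ``no front-shuffle-able $v_{i_1}$'' property of $a$ (apply the inductive hypothesis to the length-$(k-1)$ word $a$ to put a leading $v_{i_1}$ in place); if $\gamma = 0$ it contradicts $a$ having syllable-length $k-1$. Hence $\alpha_1 = \beta_m$, so $a$ and $b$ spell the same element and, by the inductive hypothesis, $b$ transforms into $a$ by adjacent swaps. Reassembling: $A = v_{i_1}^{\alpha_1}a$ transforms into $v_{i_1}^{\alpha_1}b = v_{i_1}^{\beta_m}b$ and then into $B$ by undoing the shuffle that brought $w_{j_m}^{\beta_m}$ to the front, all via swaps of adjacent syllables.

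The main obstacle is the third paragraph: pinning down exactly which cancellation the Deletion Condition supplies (the ``left-most'' argument, used twice) and extracting the equality $\alpha_1 = \beta_m$ of leading exponents. Once reducedness is known to be preserved by shuffling and syllable-length is recognized as a well-defined invariant, the rest is routine bookkeeping.
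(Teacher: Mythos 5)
The paper never proves this lemma: both the Deletion Condition and the Normal Form statement are imported as background facts from Green's and Laurence's theses (as announced at the start of $\S$\ref{Preliminaries}), so there is no in-paper argument to compare against. Your proposal is a correct, self-contained derivation of Normal Form from Lemma \ref{DeletionCondition} by induction on the common syllable length, which is a legitimate alternative to citing the theses: the key claim that a reduced word admits no pair of syllables as in the conclusion of the Deletion Condition, the identification of the prepended syllable $v_{i_1}^{-\alpha_1}$ as one member of any such pair in $v_{i_1}^{-\alpha_1}B$, and the two-case analysis pinning down $\alpha_1=\beta_m$ all check out. Two points are worth making explicit in a write-up. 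First, your normalization of exponents modulo $\ordermap(i)$ is not cosmetic: without some such convention the literal statement fails already for one-syllable words ($v^{1}$ and $v^{1+\ordermap(i)}$ spell the same element but are not related by swaps), so flagging it is the right call. Second, in the case $\gamma\neq 0$ you pass from ``the inductive hypothesis turns $a$ into a word with leading syllable $v_{i_1}^{\gamma}$'' to ``$a$ has a front-shuffle-able $v_{i_1}$-syllable''; this transfer needs the (easy but unstated) observation that adjacent swaps change the relative order of two syllables only by swapping them directly, so the syllable that reaches the front has vertex adjacent to every vertex originally preceding it in $a$. Alternatively, you can bypass this by prepending $v_{i_1}^{-\gamma}$ to $a$ and reapplying the Deletion Condition, exactly as in your second paragraph; with either patch the argument is complete.
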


The following three results witness the importance of special subgroups to the study of $W$.

\begin{lem}\label{SpecialSubgroupsEmbed}
Let $\Delta$ be a subgraph of $\Gamma$.  The natural map $W(\Delta) \to W(\Gamma)$ is an embedding.
\end{lem}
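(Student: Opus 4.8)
The plan is to exploit the normal-form machinery that precedes this lemma in the paper, specifically the Deletion Condition (Lemma \ref{DeletionCondition}) and the Normal Form Lemma (Lemma \ref{EquivalenceOfReducedWords}). The natural map $\iota\colon W(\Delta) \to W(\Gamma)$ sends each generator $v \in \vertices_\Delta$ to the corresponding generator of $W(\Gamma)$; it is well-defined because the defining relations of $W(\Delta)$---the order relations $v_i^{\ordermap(i)}$ for $v_i \in \Delta$ and the commutation relations for adjacent pairs in $\Delta$---are all among the defining relations of $W(\Gamma)$ (here I am using that $\Delta$ is a \emph{full} subgraph, so an edge of $\Gamma$ between two vertices of $\Delta$ is already an edge of $\Delta$). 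So the content is injectivity.

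The key observation is that the notions of \emph{syllable}, \emph{reduced word}, and \emph{adjacency of syllables} are ``the same'' whether computed in $W(\Delta)$ or in $W(\Gamma)$, because both the Deletion Condition and the Normal Form Lemma are phrased purely in terms of the combinatorics of the ambient graph, and $\Delta$ being full means $d_\Delta(v_i,v_j)=1$ if and only if $d_\Gamma(v_i,v_j)=1$ for $v_i, v_j \in \vertices_\Delta$. Concretely: First I would take a non-trivial element $g \in W(\Delta)$ and write it as a reduced word $w = v_{i_1}^{\alpha_1}\cdots v_{i_k}^{\alpha_k}$ over the generators of $W(\Delta)$, with $k \geq 1$. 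I claim $w$ is also reduced as a word over the generators of $W(\Gamma)$. If not, by the Deletion Condition applied in $W(\Gamma)$ there are $p < q$ with $v_{i_p} = v_{i_q}$ and $v_{i_p}$ adjacent in $\Gamma$ to each of $v_{i_{p+1}}, \dots, v_{i_{q-1}}$; but all these vertices lie in $\vertices_\Delta$, so by fullness $v_{i_p}$ is adjacent to each of them in $\Delta$ as well, contradicting that $w$ was reduced in $W(\Delta)$. Hence $w$ represents a word with $k \geq 1$ syllables that is reduced in $W(\Gamma)$; since the empty word is the unique reduced word for the identity (a reduced word for $1$ has $0$ syllables by minimality), $w$ does not represent the identity of $W(\Gamma)$, i.e. $\iota(g) \neq 1$. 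Therefore $\iota$ is injective.

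I do not expect a serious obstacle here; the only point requiring care is the appeal to fullness of $\Delta$, which is exactly what makes the adjacency relations match up, and the (trivial) remark that a reduced word for the identity must be empty, so that $\iota$ kills nothing nontrivial. One could alternatively argue via the retraction $pr_\Delta\colon W(\Gamma) \to W(\Delta)$ killing the generators outside $\Delta$ (which is a well-defined homomorphism, again by fullness) and noting $pr_\Delta \circ \iota = \mathrm{id}_{W(\Delta)}$; this is shorter but presupposes the existence of $pr_\Delta$ as a homomorphism, which itself is most cleanly justified by the same presentation bookkeeping. I would present the normal-form argument as the primary proof since it is self-contained given the lemmas already stated, and perhaps mention the retraction argument as a remark.
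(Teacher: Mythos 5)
Your argument is correct, but there is no proof in the paper to compare it with: Lemma \ref{SpecialSubgroupsEmbed} is stated in the preliminaries as a known fact, with the paper deferring to Green's and Laurence's theses (where it is proved, in greater generality, by other methods). Taken on its own terms, your deduction from the paper's stated normal-form machinery is sound: a word reduced over $\vertices_\Delta$ stays reduced in $W(\Gamma)$, since a violation detected by Lemma \ref{DeletionCondition} in $\Gamma$ involves only vertices of $\Delta$ and, by fullness, the same adjacencies hold in $\Delta$; and a nonempty reduced word cannot spell the identity. The one point you should make explicit is that you are using the (easy) converse of the Deletion Condition --- namely that if $v_{i_p}=v_{i_q}$ commutes with all intermediate syllables, then shuffling $v_{i_p}^{\alpha_p}$ next to $v_{i_q}^{\alpha_q}$ and merging produces a spelling with fewer syllables, so the word was not reduced in $W(\Delta)$ --- since Lemma \ref{DeletionCondition} as stated only gives the forward implication; this is a one-line verification, not a gap. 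Your alternative via the retraction $pr_\Delta$ (well defined precisely because $\Delta$ is full, and a left inverse to the natural map) is the standard short proof and is consistent with how the paper later uses the maps $pr_\Omega$; either write-up would be acceptable.
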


\begin{lem}\label{CenterOfW}
The center of $W$ is the special subgroup generated by the vertices
$\{v_i \in \vertices \; | \; d_\Gamma(v_i, v_j) \leq 1 \hbox{ for each } 1 \leq j \leq N\}.$
Further, the center of $W$ is finite in the case that $W$ is a graph product of primary cyclic groups.
\end{lem}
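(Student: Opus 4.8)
The plan is to prove the two inclusions separately. Write $T$ for the set of vertices in the statement, i.e. $T=\{v_i\in\vertices \mid d_\Gamma(v_i,v_j)\le 1 \text{ for all }1\le j\le N\}$, and set $C:=W(T)$. For the easy inclusion $C\subseteq Z(W)$, observe that if $v_i\in T$ then for every generator $v_j$ we have $j=i$ or $d_\Gamma(v_i,v_j)=1$, so in either case $v_i$ commutes with $v_j$ in $W$; hence each $v_i\in T$ is central and $C=\langle T\rangle\subseteq Z(W)$. The same computation shows $T$ spans a complete subgraph of $\Gamma$, so (using Lemma \ref{SpecialSubgroupsEmbed}) $C$ is the direct product $\prod_{v_i\in T}\langle v_i\rangle$ of cyclic groups; in particular if every $\ordermap(i)$ is finite then $C$ is finite, which will give the ``further'' clause once the main equality is established.

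For the reverse inclusion $Z(W)\subseteq C$, the key step is the sublemma that for each vertex $v$ the centralizer $C_W(v)$ equals the star subgroup $W(S_v)$. Granting this, one argues as follows: $w\in Z(W)$ if and only if $w$ commutes with every generator $v_j$, i.e. if and only if $w\in\bigcap_{j=1}^N C_W(v_j)=\bigcap_{j=1}^N W(S_j)$. Now an intersection of special subgroups is the special subgroup on the intersection of the defining subgraphs: if $w\in W(\Delta)$ then, reducing a word for $w$ in the alphabet of $\Delta$-vertices and invoking uniqueness of reduced words up to swapping adjacent syllables (Lemma \ref{EquivalenceOfReducedWords}), every reduced word for $w$ uses only vertices of $\Delta$, so $\supp(w)\subseteq\Delta$; the reverse containment is clear. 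Hence $w\in\bigcap_j W(S_j)$ forces $\supp(w)\subseteq\bigcap_j S_j$, so $w\in W(\bigcap_j S_j)$. Since $v_i\in\bigcap_j S_j$ precisely when $d_\Gamma(v_i,v_j)\le 1$ for all $j$, the subgraph $\bigcap_j S_j$ is exactly the one spanned by $T$, so $Z(W)\subseteq W(T)=C$, completing the equality.

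It remains to prove the sublemma $C_W(v)=W(S_v)$, which is the main obstacle. The inclusion $W(S_v)\subseteq C_W(v)$ is immediate since $v$ commutes with itself and with every vertex of $L_v$. For the converse I would take $g\in C_W(v)$, write it as a reduced word $g=v_{i_1}^{\alpha_1}\cdots v_{i_k}^{\alpha_k}$, and induct on $k$. If the first (respectively last) syllable's vertex lies in $S_v$, then $v$ commutes with that syllable, so cancelling it in the identity $vg=gv$ yields a strictly shorter reduced word commuting with $v$, and induction applies. The delicate case is when both $v_{i_1}$ and $v_{i_k}$ lie outside $S_v$: here I would use the Deletion Condition (Lemma \ref{DeletionCondition}) to verify that $vg$ and $gv$ are then reduced words of length $k+1$, apply the Normal Form lemma (Lemma \ref{EquivalenceOfReducedWords}) to transform one into the other by swapping adjacent syllables, and track the syllable $v^{\pm 1}$ introduced at the front/back to conclude it must commute past every syllable of $g$, forcing $v_{i_1}\in S_v$ — a contradiction. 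The bookkeeping when $v$ already occurs among the $v_{i_\ell}$ requires first isolating the initial and terminal occurrences of $v$ in $g$, and this is where most of the care is needed. Alternatively, the sublemma follows from the amalgamated free product decomposition $W=W(S_v)\ast_{W(L_v)}W(\Gamma\setminus\{v\})$ together with the standard description of the centralizer of a vertex-group element in an amalgam, using that $v$ is not conjugate in $W$ into $W(L_v)$ (the cyclically reduced form of any conjugate of $v$ is $v$ itself).
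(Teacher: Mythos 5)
Your argument is sound, but there is no internal proof to compare it with: like the other statements in \S\ref{Preliminaries}, this lemma is quoted by the paper from Green's and Laurence's theses without proof. Your skeleton --- the easy inclusion, then $Z(W)=\bigcap_j C_W(v_j)=\bigcap_j W(S_j)=W\bigl(\bigcap_j S_j\bigr)$ using the fact that support is well defined (Lemma \ref{EquivalenceOfReducedWords} plus the Deletion Condition to see that a $\Delta$-reduced word is $W$-reduced) --- is correct, and the finiteness clause follows as you say because the central vertices span a complete subgraph. Note, however, that the sublemma you identify as the main obstacle, $C_W(v)=W(S_v)$, is exactly Lemma \ref{CentralizerOfAVertex}, which the paper also records as a preliminary (again citing the theses); invoking it involves no circularity, since nothing in the paper derives the centralizer statement from the center statement, so your proof could legitimately be one paragraph long.

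If you do want a self-contained proof of the sublemma, be aware that your inductive sketch is only complete in the stripped cases: in the ``delicate case'' the shuffle argument gives the contradiction cleanly when the front syllable $v$ of $vg$ is carried to the terminal syllable of $gv$ (reversing its order relative to the first syllable forces a direct swap, hence $v_{i_1}\in L_v$), but when $v$ occurs among the syllables of $g$ the tracked syllable may land on an interior $v$-syllable and the bookkeeping you defer is where the real work lies. Your alternative route is the cleanest fix: the visual splitting $W=W(S_v)\ast_{W(L_v)}W(\Gamma\setminus\{v\})$, together with the fact that $v$ is not conjugate into $W(L_v)$ (already visible in the abelianization), shows $v$ fixes a unique vertex of the Bass--Serre tree, so its centralizer lies in $W(S_v)$, where $v$ is central. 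Either of these closes the argument, so the proposal as a whole stands.
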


It follows that $\InnW$ is isomorphic to the special subgroup generated by the vertices
$\{v_i \in \vertices \; | \; d_\Gamma(v_i, v_j) > 1 \hbox{ for some } 1 \leq j \leq N\}$.  Further,
$\InnW$ is isomorphic to a finite-index subgroup of $W$ in the case that $W$ is a graph product of primary cyclic groups.

\begin{lem}
A special subgroup $W(\Delta)$ has finite order if and only if $\Delta$ is
a complete graph and each vertex of $\Delta$ has finite order.  Further,
if a subgroup $H$ of $W$ has finite order, then $H$ is contained in some conjugate of a special subgroup of finite order.
\end{lem}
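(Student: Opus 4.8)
The plan is to prove the equivalence by a direct computation and the ``Further'' clause by induction on the number of vertices, using a decomposition of $W$ as an amalgamated free product together with Bass--Serre theory.

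For the equivalence: if $\Delta$ is complete and each of its vertices has finite order, then the defining presentation realizes $W(\Delta)$ as the direct sum $\bigoplus_{v_i \in \vertices_\Delta} \Integer / \ordermap(i)\Integer$ of finite cyclic groups, so $W(\Delta)$ is finite. For the converse, suppose $\Delta$ fails one of the two conditions. If some $v_i \in \vertices_\Delta$ has infinite order, then Lemma \ref{SpecialSubgroupsEmbed} embeds $\langle v_i\rangle \cong \Integer$ in $W(\Delta)$, so $W(\Delta)$ is infinite. If $\Delta$ is not complete, pick non-adjacent $v_i, v_j \in \vertices_\Delta$; the subgraph of $\Gamma$ on $\{v_i,v_j\}$ has no edge, so by Lemma \ref{SpecialSubgroupsEmbed} the free product $\langle v_i\rangle \ast \langle v_j\rangle$ embeds in $W(\Delta)$, and a free product of two non-trivial groups is infinite. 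Combined with the first case, this shows that the finite special subgroups of $W$ are precisely the $W(\Delta)$ with $\Delta$ complete and all vertices of $\Delta$ of finite order.

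For the ``Further'' clause I induct on $N = |\vertices|$. If $\Gamma$ is complete then $W = \prod_{i=1}^N \langle v_i \rangle$; a finite subgroup $H$ has trivial image under projection to each infinite-order factor (the only finite subgroup of $\Integer$ is trivial), so $H$ lies in the finite special subgroup $W(\Delta)$, where $\Delta$ is the complete subgraph spanned by the finite-order vertices. If $\Gamma$ is not complete, choose $v \in \vertices$ non-adjacent to at least one other vertex, and let $S$ and $L$ denote respectively the star and link of $v$. Then $S$ and $\Gamma\setminus\{v\}$ are full subgraphs whose union is $\Gamma$, with $S \cap (\Gamma\setminus\{v\}) = L$ and no edge of $\Gamma$ joining $v$ to a vertex of $\Gamma \setminus S$; the standard decomposition of graph products along such a separating subgraph (cf.\ \cite{GreenThesis}) gives
$$W \;=\; W(S) \ast_{W(L)} W(\Gamma\setminus\{v\}).$$
Since $v$ is non-adjacent to some vertex, $\Gamma \setminus S \neq \emptyset$, so $S \neq \Gamma$ and both factors are special subgroups on strictly fewer than $N$ vertices (and are again graph products of directly-indecomposable cyclic groups). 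Let $H \leq W$ be finite. Acting on the Bass--Serre tree of this amalgam --- an action without inversions whose vertex stabilizers are the conjugates of $W(S)$ and of $W(\Gamma\setminus\{v\})$ --- the finite group $H$ has a bounded orbit and hence fixes a vertex, so $H \leq g W(\Gamma')g^{-1}$ for some $g \in W$ and some $\Gamma' \in \{\,S,\ \Gamma\setminus\{v\}\,\}$. By the inductive hypothesis applied inside $W(\Gamma')$, the finite subgroup $g^{-1}Hg$ lies in $hW(\Delta)h^{-1}$ for some $h \in W(\Gamma')$ and some complete $\Delta \subseteq \Gamma'$ all of whose vertices have finite order; but $\Delta$ is then such a subgraph of $\Gamma$ as well, so $W(\Delta)$ is a finite special subgroup of $W$ and $H \leq (gh)W(\Delta)(gh)^{-1}$, as required.

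The substantive step is the inductive one, and within it there are three routine-but-necessary checks: that the star decomposition genuinely presents $W$ as a non-trivial amalgamated product (so the induction applies to strictly smaller graphs), that the Bass--Serre action is without inversions so that a finite subgroup fixes an honest vertex rather than merely an edge-midpoint, and that a finite special subgroup of a special subgroup $W(\Gamma')$ is again a finite special subgroup of $W$. (Alternatively, one may simply cite the corresponding assertions in \cite{GreenThesis} or \cite{LaurenceThesis}, where this lemma is established in greater generality by different methods.)
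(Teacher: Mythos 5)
Your proof is correct, but it is worth noting that the paper itself does not prove this lemma at all: it appears in the Preliminaries, where the authors state that these facts ``appear, in more general form'' in Green's and Laurence's theses \cite{GreenThesis, LaurenceThesis}, proved there by different (essentially combinatorial, normal-form) methods. So your submission is a genuinely different route --- a self-contained argument where the paper simply cites the literature. Your handling of the equivalence is the standard one (direct product of finite cyclic groups in one direction; embedding $\Integer$ or an infinite free product via Lemma \ref{SpecialSubgroupsEmbed} in the other), and your inductive proof of the ``Further'' clause via the star decomposition $W = W(S) \ast_{W(L)} W(\Gamma\setminus\{v\})$ and the fixed-point property of finite groups acting on the Bass--Serre tree is sound: the decomposition is legitimate because every edge at $v$ lands in $L$, both factors are graph products on strictly fewer vertices (using $\Gamma\setminus S \neq \emptyset$), the amalgam action is without inversions, and a full subgraph of a full subgraph is full, so the finite special subgroup produced by induction inside $W(\Gamma')$ is a finite special subgroup of $W$. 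What your approach buys is transparency and independence from unpublished theses, at the cost of invoking Bass--Serre theory; what the paper's citation-based approach buys is brevity and access to the more general statements (arbitrary vertex groups) that Green and Laurence establish by manipulating normal forms (Lemma \ref{DeletionCondition} and Lemma \ref{EquivalenceOfReducedWords} are the residue of that machinery in this paper). Either is acceptable; your three ``routine-but-necessary checks'' are exactly the right points to verify, and they all go through.
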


The centralizer of a vertex is easily understood.

\begin{lem}\label{CentralizerOfAVertex}
For each $1 \leq j \leq N$, the centralizer of $v_j$ in $W$ is the special subgroup generated by $S_j$.
\end{lem}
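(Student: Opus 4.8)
The plan is to prove the two inclusions separately. The inclusion $W(S_j)\subseteq C_W(v_j)$ (writing $C_W(v_j)$ for the centralizer of $v_j$ in $W$) is immediate from the defining relations~\eqref{PresentationOfW}: $v_j$ commutes with itself and, since every vertex of $L_j$ is adjacent to $v_j$, with every generator of $W(S_j)$. For the reverse inclusion I would show that $w\in W(S_j)$ for every $w\in C_W(v_j)$ by strong induction on the reduced syllable length $k$ of $w$; the case $k=0$ is trivial.

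So fix $k\geq 1$ and a reduced word $w=v_{i_1}^{\alpha_1}\cdots v_{i_k}^{\alpha_k}$. First dispose of the case $v_{i_k}=v_j$: here $w\,v_j^{-\alpha_k}=v_{i_1}^{\alpha_1}\cdots v_{i_{k-1}}^{\alpha_{k-1}}$ is an element of $C_W(v_j)$ of reduced length $k-1$, so it lies in $W(S_j)$ by induction, and hence so does $w$. Assume then that $v_{i_k}\neq v_j$. The key device is the word
\[
\Phi \;:=\; v_{i_1}^{\alpha_1}\cdots v_{i_k}^{\alpha_k}\;\;v_j\;\;v_{i_k}^{-\alpha_k}\cdots v_{i_1}^{-\alpha_1},
\]
which (using $v_{i_k}\neq v_j$ at the two joins with the central letter) is a word of the kind required by the Deletion Condition, has $2k+1\geq 3$ syllables, and represents $wv_jw^{-1}=v_j$. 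Since the one-syllable word $v_j$ represents the same element, $\Phi$ is not reduced, so the Deletion Condition (Lemma~\ref{DeletionCondition}) produces positions $1\leq p<q\leq 2k+1$ whose syllables involve a common vertex, adjacent to every vertex strictly between them; I write position $k+1$ for the central letter $v_j$.

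The heart of the argument is then a short case analysis on the positions of $p$ and $q$ relative to $k+1$. If $p,q\leq k$, or if $p,q\geq k+2$, then the two matching positions lie inside the reduced word for $w$, respectively for $w^{-1}$; swapping the later of these two syllables leftwards past the (commuting) intervening syllables and then combining it with the earlier one would produce a strictly shorter word, contradicting reducedness. In each remaining case I claim there is an index $m$ with $1\leq m\leq k$ for which $v_{i_m}$ commutes with $v_j$ and with each of $v_{i_{m+1}},\dots,v_{i_k}$: if $q=k+1$ one takes $m=p$, with $v_{i_p}=v_j$; if $p=k+1$ one takes $m=2k+2-q\,(\leq k)$, with $v_{i_m}=v_j$ adjacent to $v_{i_{m+1}},\dots,v_{i_k}$; and if $p\leq k$ and $q\geq k+2$ one takes $m=p$, using that position $k+1$ lies strictly between $p$ and $q$, so $v_{i_p}$ is adjacent to $v_j$ and to $v_{i_{p+1}},\dots,v_{i_k}$. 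In every such case $v_{i_m}\in S_j$; swapping $v_{i_m}^{\alpha_m}$ to the far right of the word yields $w=w'\,v_{i_m}^{\alpha_m}$ with $w':=v_{i_1}^{\alpha_1}\cdots\widehat{v_{i_m}^{\alpha_m}}\cdots v_{i_k}^{\alpha_k}$ of reduced length at most $k-1$. Since $v_{i_m}^{\alpha_m}\in W(S_j)\subseteq C_W(v_j)$, also $w'=w\,v_{i_m}^{-\alpha_m}\in C_W(v_j)$, so $w'\in W(S_j)$ by induction and $w\in W(S_j)$, completing the induction.

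The one genuine idea is the conjugation device: demanding that $wv_jw^{-1}$ be simultaneously the long word $\Phi$ and the single letter $v_j$ forces the Deletion Condition to locate a syllable of $w$ lying ``close to'' $v_j$, which can then be peeled off. I expect the main obstacle to be purely the bookkeeping in the third paragraph: verifying that the listed sub-cases exhaust all positions of $(p,q)$, that each really supplies the stated index $m$, and that discarding the $m$-th syllable strictly lowers the syllable length; together with the routine fact, used for the contradictions, that matching adjacency-linked positions cannot occur inside an already-reduced word. Nothing is needed beyond the Deletion Condition and the Normal Form lemma (Lemma~\ref{EquivalenceOfReducedWords}).
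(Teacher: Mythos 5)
Your argument is correct. One point of comparison is worth making: the paper itself gives no proof of Lemma \ref{CentralizerOfAVertex} — it is stated among the preliminaries and attributed, in more general form, to the theses of Green and Laurence — so any complete argument here is "extra" relative to the paper. Your proof is a clean, self-contained one from the Deletion Condition alone: the conjugation word $\Phi$ spelling $wv_jw^{-1}=v_j$ forces a deletable pair, the positions of $(p,q)$ relative to the central letter exhaust all possibilities (both in the first half or both in the second half contradict reducedness of $w$, resp.\ of the reduced word for $w^{-1}$; otherwise you extract a syllable $v_{i_m}^{\alpha_m}$ with $v_{i_m}\in S_j$ commuting with everything to its right), and peeling that syllable off drops the reduced syllable length so the strong induction closes. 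The only steps left implicit — that a prefix of a reduced word is reduced, that the formal inverse of a reduced word is reduced, and the converse direction of the Deletion Condition used for the two contradictions — are exactly the routine facts you flag, and they follow immediately from Lemmas \ref{DeletionCondition} and \ref{EquivalenceOfReducedWords}. This is essentially the same normal-form/induction technique used in the cited theses (and the same style of syllable bookkeeping the paper itself uses in the proof of Theorem \ref{QiEmbedsInAutZ}), so nothing beyond the paper's stated preliminaries is needed.
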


\section{A splitting of $\CAutW$}\label{TitsSplittingSection}

In this section we prove that $\CAutW = \AutZ \rtimes \QAutW$
(Theorem \ref{TitsSplittingRecovered}), thus generalizing a result
of Tits \cite{Tits}.  We shall prove
the result by exhibiting a retraction homomorphism $\CAutW \to \QAutW$ with kernel $\AutZ$.

For each $\Delta \in \MCS$ and $w \in W$ and $u \in w W(\Delta)
w^{-1}$, there exists a unique element $\cyclicallyreduced{u}$ of
minimal length in the conjugacy class of $u$. Equivalently, $\cyclicallyreduced{u}$ is the unique
element of $W(\Delta)$ in the conjugacy class of $u$.  For each
automorphism $\gamma \in \CAutW$, define $r(\gamma): \vertices \to
W$ by the rule $v \mapsto \cyclicallyreduced{\gamma(v)}.$

\begin{lem}\label{GAutToCAutLemmaOne}
For each automorphism $\gamma \in \CAutW$, the map
$r(\gamma)\!: \vertices \to W$ extends to an endomorphism of $W$.
\end{lem}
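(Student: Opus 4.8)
The plan is to verify that the assignment $v \mapsto \cyclicallyreduced{\gamma(v)}$ respects the defining relations of the presentation (\ref{PresentationOfW}); by the universal property of presentations it then extends to an endomorphism of $W$. So I must check two families of relations: the order relations $v_i^{\ordermap(i)} = 1$, and the commutation relations $[v_j, v_k] = 1$ for adjacent $v_j, v_k$. For the order relations, note that for any $g \in W$ and any conjugate $u = wgw^{-1}$, the cyclically reduced representative $\cyclicallyreduced{u}$ is itself conjugate to $g$ (indeed to $u$), so $\cyclicallyreduced{u}$ and $u$ have the same order. Applying this with $g$ a generator of the relevant maximal complete subgroup: since $\gamma(v_i)$ has order $\ordermap(i)$ (automorphisms preserve order), $r(\gamma)(v_i) = \cyclicallyreduced{\gamma(v_i)}$ also has order $\ordermap(i)$, so $r(\gamma)(v_i)^{\ordermap(i)} = 1$. (When $\ordermap(i) = \infty$ there is nothing to check.)

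The substantive work is the commutation relations. Fix adjacent vertices $v_j, v_k$; they lie in a common clique, hence in a common maximal complete subgraph $\Delta \in \MCS$. Since $\gamma \in \CAutW$, we have $\gamma(W(\Delta)) = w W(\Delta') w^{-1}$ for some $\Delta' \in \MCS$ and some $w \in W$. Both $\gamma(v_j)$ and $\gamma(v_k)$ therefore lie in the single coset $w W(\Delta') w^{-1}$, and crucially they commute (being images of commuting elements under a homomorphism). Now I claim that passing to cyclically reduced representatives preserves this: $\cyclicallyreduced{\gamma(v_j)}$ and $\cyclicallyreduced{\gamma(v_k)}$ are precisely the representatives of $\gamma(v_j)$ and $\gamma(v_k)$ lying in $w^{-1}(w W(\Delta') w^{-1})w$-conjugated-into-$W(\Delta')$ — more simply, by the remark preceding the lemma, $\cyclicallyreduced{u}$ is the unique element of $W(\Delta')$ conjugate to $u$, and conjugation by $w^{-1}$ sends $\gamma(v_j), \gamma(v_k)$ simultaneously into $W(\Delta')$. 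Thus $\cyclicallyreduced{\gamma(v_j)} = w^{-1}\gamma(v_j)w$ and $\cyclicallyreduced{\gamma(v_k)} = w^{-1}\gamma(v_k)w$, and since conjugation by $w^{-1}$ is an automorphism it preserves commuting, so $r(\gamma)(v_j)$ and $r(\gamma)(v_k)$ commute, i.e. $[r(\gamma)(v_j), r(\gamma)(v_k)] = 1$.

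The one point that needs care — and which I expect to be the main obstacle — is establishing that for $u$ in a conjugate $wW(\Delta')w^{-1}$ of a maximal complete subgroup, the cyclically reduced representative $\cyclicallyreduced{u}$ equals the common conjugate $w^{-1}uw \in W(\Delta')$, and in particular that this does not depend on the choice of $w$ or even of $\Delta'$. This rests on the uniqueness asserted in the paragraph before Lemma \ref{GAutToCAutLemmaOne}: a maximal complete subgroup meets each of its conjugacy classes in at most one element. I would justify this uniqueness directly from Lemma \ref{DeletionCondition} and Lemma \ref{EquivalenceOfReducedWords}: an element of $W(\Delta')$, written in the generators of $\Delta'$, is already cyclically reduced since all syllables pairwise commute, and two conjugate elements both lying in special subgroups $W(\Delta')$, $W(\Delta'')$ with $\Delta', \Delta'' \in \MCS$ must in fact coincide — one shows the conjugating element can be pushed through using the Deletion Condition until it acts trivially. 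Once this uniqueness is in hand, the two relation-checks above go through and the universal property of the presentation (\ref{PresentationOfW}) delivers the endomorphism, completing the proof.
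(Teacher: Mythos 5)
Your proposal is correct and follows essentially the same route as the paper: check the order relations via preservation of order under automorphisms and conjugation, and check the commutation relations by noting that adjacent generators map into a common conjugate $wW(\Delta')w^{-1}$ of a maximal complete subgroup, so their cyclically reduced representatives are the simultaneous $w^{-1}(\cdot)w$-conjugates lying in the abelian group $W(\Delta')$ and hence commute. The uniqueness of the representative in $W(\Delta')$, which you flag as the delicate point, is exactly the fact the paper asserts in the paragraph preceding the lemma and likewise does not reprove.
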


\begin{proof}
It suffices to show that the relations used to define $W$ are
`preserved' by $r(\gamma)$.

Let $1 \leq i \leq N$ be such that $\ordermap(i) < \infty$. Since the order of an element is preserved under
automorphisms and conjugation, $\bigl(r(\gamma)(v_i)\bigr)^{\ordermap(v_i)} = 1$ and $r(\gamma)$ preserves the relation $v_i^{\ordermap(v_i)} = 1$.

Let $1 \leq j < k \leq N$ be such that $d(v_j, v_k) = 1$. There exist $w
\in W$ and $\Delta \in \MCS$ and $a, b \in W(\Delta)$ such that
$\gamma(v_j) = w a w^{-1}$ and $\gamma(v_k) = w b w^{-1}$. Recall that $W(\Delta)$ is an abelian group.  Then
$$\bigl(r(\gamma)(v_j)\bigr) \bigl(r(\gamma)(v_k)\bigr) \bigl(r(\gamma)(v_j)\bigr)^{-1} \bigl(r(\gamma)(v_k)\bigr)^{-1} =  aba^{-1}b^{-1} = 1.$$
 Thus $r(\gamma)$ preserves the relation $v_j v_k v_j^{-1}v_k^{-1} = 1$.
\end{proof}

We abuse notation by writing $r(\gamma): W \to W$ for the endomorphism of $W$ determined by $r(\gamma)\!: \vertices \to W$.

\begin{lem}\label{CommutingProductsEasyToUnderstandLemma}
For each automorphism $\delta \in \CAutW$ and $\Delta \in \MCS$ and $a \in W(\Delta)$, we have $r(\delta)(a) = [\delta(a)]$.
\end{lem}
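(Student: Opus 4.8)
The plan is to read off the identity from the definition of $\CAutW$ together with the characterisation of $[\cdot]$ as the unique representative of a conjugacy class lying in a maximal complete subgroup. Concretely, I would fix $\delta \in \CAutW$, $\Delta \in \MCS$ and $a \in W(\Delta)$, and use the definition of $\CAutW$ to choose a single $w \in W$ and a single $\Delta' \in \MCS$ with $\delta(W(\Delta)) = w W(\Delta') w^{-1}$ (the same $w$ and $\Delta'$ for every element of $W(\Delta)$). Everything then reduces to comparing two elements of $W(\Delta')$.

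First I would handle vertices: for each vertex $v$ of $\Delta$ the element $b_v := w^{-1}\delta(v)w$ lies in $W(\Delta')$ and is conjugate to $\delta(v)$, so by uniqueness of the element of $W(\Delta')$ in the conjugacy class of $\delta(v)$ we get $r(\delta)(v) = [\delta(v)] = b_v$, equivalently $\delta(v) = w b_v w^{-1}$. Next I would write $a$ as any word $v_1^{\alpha_1}\cdots v_k^{\alpha_k}$ in the vertices of $\Delta$, with $\alpha_1,\dots,\alpha_k \in \Integer$. Applying the endomorphism $r(\delta)$ of Lemma \ref{GAutToCAutLemmaOne} gives $r(\delta)(a) = b_{v_1}^{\alpha_1}\cdots b_{v_k}^{\alpha_k}$, which lies in $W(\Delta')$; applying the homomorphism $\delta$ and using that conjugation by $w$ is a homomorphism gives $\delta(a) = (w b_{v_1} w^{-1})^{\alpha_1}\cdots(w b_{v_k} w^{-1})^{\alpha_k} = w\,\bigl(b_{v_1}^{\alpha_1}\cdots b_{v_k}^{\alpha_k}\bigr)\,w^{-1} = w\, r(\delta)(a)\, w^{-1}$.

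Finally I would invoke uniqueness once more: $r(\delta)(a)$ is an element of $W(\Delta')$ conjugate to $\delta(a)$, and $[\delta(a)]$ is by definition the unique such element, so $r(\delta)(a) = [\delta(a)]$. I do not expect a genuine obstacle here; the argument is bookkeeping. The two load-bearing facts I would be careful to cite are that membership in $\CAutW$ supplies one conjugator and one target clique that work for all of $W(\Delta)$ simultaneously, and that $[\cdot]$ is well-defined on any element conjugate into a maximal complete subgroup --- both recorded in the discussion preceding Lemma \ref{GAutToCAutLemmaOne}.
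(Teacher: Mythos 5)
Your argument is correct and is essentially the paper's own proof: both write $a$ as a word in the vertices of $\Delta$, use a single conjugator $w$ and target clique supplied by the definition of $\CAutW$, apply the endomorphism $r(\delta)$ of Lemma \ref{GAutToCAutLemmaOne} to the word, and conclude by the uniqueness of the representative of the conjugacy class lying in the target maximal complete subgroup. No gaps; your $b_v$ plays exactly the role of the paper's $t_i$.
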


\begin{proof}
We have $a = d_1^{\epsilon_1} \dots d_q^{\epsilon_q}$ for some vertices  $d_1, \dots, d_q$ in $\Delta$ and some integers $\epsilon_1, \dots, \epsilon_q$. By the definition of $\CAutW$, there
exist $w \in W$ and $\Theta \in \MCS$ such that $\delta(W(\Delta)) = wW(\Theta)w^{-1}$.
Hence there exist $t_1, \dots, t_q \in W(\Theta)$ such that $\delta(d_i) = w t_i w^{-1}$ for each $1 \leq i \leq q$.
Then
\begin{multline*}
r(\delta)(a) = r(\delta)(d_1)^{\epsilon_1} \dots r(\delta)(d_q)^{\epsilon_q} = t_1^{\epsilon_1} \dots t_q^{\epsilon_q}  = [w t_1^{\epsilon_1} w^{-1} \dots w t_q^{\epsilon_q} w^{-1}] = [\delta(a)],
\end{multline*}
as required.
\end{proof}

\begin{lem}\label{GAutToCAutLemmaTwo}
For each pair of automorphisms $\gamma, \delta \in
\CAutW$, we have $r(\delta \gamma) = r(\delta) r(\gamma)$.
\end{lem}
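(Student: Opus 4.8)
The plan is to check the identity $r(\delta\gamma)=r(\delta)r(\gamma)$ on the generating set $\vertices$ and then appeal to Lemma \ref{GAutToCAutLemmaOne}: both $r(\delta\gamma)$ and the composite $r(\delta)r(\gamma)$ are endomorphisms of $W$, so agreement on each $v_i$ forces them to be equal.

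So fix a vertex $v_i$. The first step is to observe that $r(\gamma)(v_i)$ lies in a single special subgroup associated to a clique. Indeed, choose $\Delta \in \MCS$ with $v_i \in \Delta$ (such a $\Delta$ exists, possibly the singleton $\{v_i\}$). Since $\gamma \in \CAutW$, there are $w \in W$ and $\Theta \in \MCS$ with $\gamma(W(\Delta)) = wW(\Theta)w^{-1}$; writing $\gamma(v_i) = waw^{-1}$ with $a \in W(\Theta)$ and using that $W(\Theta)$ is abelian (so $a$ is itself the unique minimal-length element of its conjugacy class), we get $r(\gamma)(v_i) = [\gamma(v_i)] = [waw^{-1}] = a \in W(\Theta)$.

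The second step pulls $r(\delta)$ through. Apply Lemma \ref{CommutingProductsEasyToUnderstandLemma} to the automorphism $\delta$ and the element $a \in W(\Theta)$ to obtain $r(\delta)(a) = [\delta(a)]$; hence $r(\delta)r(\gamma)(v_i) = r(\delta)(a) = [\delta(a)]$. Finally, since $a = [\gamma(v_i)]$ is conjugate to $\gamma(v_i)$ in $W$, the element $\delta(a)$ is conjugate to $\delta\gamma(v_i)$, and because $[\,\cdot\,]$ depends only on the conjugacy class we conclude $[\delta(a)] = [\delta\gamma(v_i)] = r(\delta\gamma)(v_i)$. Combining the two displays gives $r(\delta)r(\gamma)(v_i) = r(\delta\gamma)(v_i)$ for every $i$, which completes the proof.

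I do not expect a genuine obstacle here; the only point needing a word of care is the first step, namely that $r(\gamma)(v_i)$ genuinely lands inside one special subgroup $W(\Theta)$ with $\Theta$ a clique, so that Lemma \ref{CommutingProductsEasyToUnderstandLemma} is applicable. This rests solely on the defining property of $\CAutW$ together with the trivial observation that every vertex belongs to at least one maximal complete subgraph.
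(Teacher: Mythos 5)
Your proposal is correct and follows essentially the same route as the paper: check the identity on each generator $v_i$, use the defining property of $\CAutW$ to place $\gamma(v_i)$ in a conjugate of a maximal complete subgroup so that $r(\gamma)(v_i)=a\in W(\Theta)$, and apply Lemma \ref{CommutingProductsEasyToUnderstandLemma} to $\delta$ and $a$. The only cosmetic difference is that you invoke conjugacy-invariance of $[\,\cdot\,]$ to identify $[\delta(a)]$ with $[\delta\gamma(v_i)]$, whereas the paper writes out the explicit conjugating elements $w_1,w_2$ and the representative $b\in W(\Theta)$; the content is identical.
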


\begin{proof}
Let $\gamma, \delta \in \CAutW$ and let $1 \leq i \leq N$.
There exist $\Delta \in \MCS$ and $a \in
W(\Delta)$ and $w_1 \in W$ such that $\gamma(v_i) = w_1 a w_1^{-1}$.  There exist $\Theta \in \MCS$, $b \in W(\Theta)$ and $w_2 \in W$ such that
$\delta(W(\Delta)) = w_2 W(\Theta) w_2^{-1}$ and $\delta(a) = w_2 b w_2^{-1}$.  By Lemma \ref{CommutingProductsEasyToUnderstandLemma} we have
that $r(\delta)(a) = b$.
Then
\begin{multline*}
r(\delta \gamma)(v_i) = [\delta \gamma(v_i)] = [\delta(w_1) w_2 b w_2^{-1} \delta(w_1)^{-1}]  = b = r(\delta)(a) = r(\delta) r(\gamma)(v_i),
\end{multline*}
as required.
\end{proof}

\begin{lem}\label{ImageOfRetractionLemma}
For each $\gamma \in \CAutW$, $r(\gamma) \in \QAutW$.
\end{lem}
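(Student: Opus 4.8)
The plan is to establish two things about $r(\gamma)$: that it is an automorphism of $W$, and that it carries each maximal complete subgroup \emph{onto} (not merely into) a maximal complete subgroup.

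For the first point, I would observe that $[v_i] = v_i$ for each $i$, so $r(\mathrm{id}) = \mathrm{id}$. Since $\CAutW$ is a subgroup of $\AutW$, both $\gamma$ and $\gamma^{-1}$ lie in $\CAutW$, and Lemma \ref{GAutToCAutLemmaTwo} gives $r(\gamma)\,r(\gamma^{-1}) = r(\gamma^{-1})\,r(\gamma) = r(\mathrm{id}) = \mathrm{id}$. As $r(\gamma)$ and $r(\gamma^{-1})$ are endomorphisms of $W$ by Lemma \ref{GAutToCAutLemmaOne}, they are mutually inverse automorphisms; in particular $r(\gamma) \in \AutW$.

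For the second point, fix $\Delta \in \MCS$. Because $\gamma \in \CAutW$ there exist $w \in W$ and $\Theta \in \MCS$ with $\gamma(W(\Delta)) = wW(\Theta)w^{-1}$. For $a \in W(\Delta)$, Lemma \ref{CommutingProductsEasyToUnderstandLemma} gives $r(\gamma)(a) = [\gamma(a)]$, and since $\gamma(a) \in wW(\Theta)w^{-1}$ its cyclically reduced form lies in $W(\Theta)$; hence $r(\gamma)(W(\Delta)) \subseteq W(\Theta)$. Applying $\gamma^{-1}$ to $\gamma(W(\Delta)) = wW(\Theta)w^{-1}$ shows $\gamma^{-1}(W(\Theta))$ is a conjugate of $W(\Delta)$, so the same reasoning applied to $\gamma^{-1}$ yields $r(\gamma^{-1})(W(\Theta)) \subseteq W(\Delta)$. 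Then, using that $r(\gamma^{-1})$ inverts $r(\gamma)$,
$$W(\Theta) = r(\gamma)\bigl(r(\gamma^{-1})(W(\Theta))\bigr) \subseteq r(\gamma)(W(\Delta)) \subseteq W(\Theta),$$
so $r(\gamma)(W(\Delta)) = W(\Theta) \in \MCS$. As $\Delta$ was arbitrary, $r(\gamma) \in \QAutW$.

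I expect the only genuine subtlety to be in the last step: upgrading the inclusion $r(\gamma)(W(\Delta)) \subseteq W(\Theta)$, which comes directly from Lemma \ref{CommutingProductsEasyToUnderstandLemma}, to an equality. Running the inclusion argument for $\gamma^{-1}$ as well and invoking that $r(\gamma)$ and $r(\gamma^{-1})$ are mutually inverse resolves this cleanly, so no delicate estimates are needed.
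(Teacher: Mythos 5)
Your proof is correct and takes essentially the same route as the paper's: Lemma \ref{GAutToCAutLemmaTwo} applied to $\gamma$ and $\gamma^{-1}$ shows $r(\gamma)$ is an automorphism, and membership in $\QAutW$ then follows from the definitions. The paper simply declares this last point ``clear,'' while you spell out the worthwhile detail that the inclusion $r(\gamma)(W(\Delta)) \subseteq W(\Theta)$ from Lemma \ref{CommutingProductsEasyToUnderstandLemma} is upgraded to equality using $r(\gamma^{-1})$.
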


\begin{proof}
Let $\gamma \in \CAutW$.  By Lemma \ref{GAutToCAutLemmaTwo}, $r(\gamma^{-1})r(\gamma) = r(\gamma^{-1} \circ \gamma) = r(id) = id$ and
$r(\gamma)$ is an automorphism of $W$.  It is clear from the definitions that $r(\gamma) \in \QAutW$.
\end{proof}

\begin{prop}\label{RetractionThm}
The map $r$ is a retraction homomorphism $\CAutW \to \QAutW$ with kernel $\AutZ$.
\end{prop}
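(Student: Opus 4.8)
The plan is to assemble Proposition~\ref{RetractionThm} from the lemmas already proved. First I would observe that Lemma~\ref{GAutToCAutLemmaTwo} says $r(\delta \gamma) = r(\delta) r(\gamma)$ for all $\gamma, \delta \in \CAutW$, so $r$ is multiplicative as a map from $\CAutW$ to the monoid of endomorphisms of $W$; Lemma~\ref{ImageOfRetractionLemma} upgrades this by showing the image lands in $\QAutW$ and that each $r(\gamma)$ is in fact an automorphism. Hence $r \colon \CAutW \to \QAutW$ is a group homomorphism. To see it is a retraction, I would check that $r$ restricted to $\QAutW$ is the identity: if $\gamma \in \QAutW$ then $\gamma$ maps each maximal complete subgroup (and hence each vertex $v_i$, which lies in some $\Delta \in \MCS$) into a maximal complete subgroup without conjugating, so $\gamma(v_i) \in W(\Theta)$ for some $\Theta \in \MCS$; since $\gamma(v_i)$ already lies in a special subgroup in its own conjugacy class, it is cyclically reduced, giving $[\gamma(v_i)] = \gamma(v_i)$ and thus $r(\gamma)(v_i) = \gamma(v_i)$ for all $i$, so $r(\gamma) = \gamma$. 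This shows $r \circ \iota = \mathrm{id}_{\QAutW}$ where $\iota \colon \QAutW \hookrightarrow \CAutW$ is the inclusion, which is exactly the statement that $r$ is a retraction homomorphism.

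The remaining task is to identify the kernel of $r$ with $\AutZ$. For one inclusion, suppose $\gamma \in \ker r$, i.e.\ $r(\gamma) = \mathrm{id}$, so $[\gamma(v_i)] = v_i$ for every $i$. This means $\gamma(v_i)$ is conjugate to $v_i$, i.e.\ there exists $w_i \in W$ with $\gamma(v_i) = w_i v_i w_i^{-1}$; hence $\gamma$ sends each vertex to a conjugate of itself, which is precisely the defining condition for $\gamma \in \AutZ$. (A small point to address: $\AutZ$ as defined is a subgroup of $\AutW$; one should note that any $\gamma \in \AutZ$ automatically lies in $\CAutW$, because a maximal complete subgroup $W(\Delta)$ with $\Delta = \{v_{i_1}, \dots, v_{i_k}\}$ gets sent by $\gamma$ into the subgroup generated by the conjugates $w_{i_1} v_{i_1} w_{i_1}^{-1}, \dots$; since these must generate a conjugate of a maximal complete subgroup — using that $\gamma(W(\Delta))$ is a maximal finite-type abelian pattern, or more cleanly just that $\AutZ \subseteq \CAutW$ can be seen directly from the fact that conjugating each generator conjugates $W(\Delta)$ to $w W(\Delta) w^{-1}$ when $\Delta$ is complete and all $w_{i}$ can be taken equal on a clique — this needs a line.) For the reverse inclusion, if $\gamma \in \AutZ$ then $\gamma(v_i) = w_i v_i w_i^{-1}$, so $[\gamma(v_i)] = v_i$ and $r(\gamma)(v_i) = v_i$ for all $i$, whence $r(\gamma) = \mathrm{id}$ and $\gamma \in \ker r$.

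I expect the main obstacle to be the bookkeeping around the claim that $\AutZ \subseteq \CAutW$, so that $\ker r$ (a priori a subgroup of $\CAutW$) and $\AutZ$ are being compared inside the same ambient group. The cleanest route is: for $\gamma \in \AutZ$ and $\Delta = \{d_1, \dots, d_q\} \in \MCS$ a clique, the elements $\gamma(d_1), \dots, \gamma(d_q)$ pairwise commute (as images under an automorphism of pairwise-commuting elements), each $\gamma(d_t)$ is conjugate to $d_t$ hence has the same (possibly infinite) order, and the subgroup they generate is therefore an abelian subgroup isomorphic to $W(\Delta)$; one then invokes the structure theory (e.g.\ the finite-subgroup lemma in the primary case, or more generally a direct argument that a set of pairwise-commuting conjugates $w_t d_t w_t^{-1}$ forces a common conjugator on a clique) to conclude $\gamma(W(\Delta))$ is a conjugate of a maximal complete subgroup. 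Given the framing of the paper, it is likely the authors simply remark that $\AutZ \subseteq \CAutW$ is immediate or cite it; in any case, once that containment is granted the proposition follows formally from Lemmas~\ref{GAutToCAutLemmaTwo} and~\ref{ImageOfRetractionLemma} plus the two short computations above, and then Theorem~\ref{TitsSplittingRecovered} is the standard consequence that a retraction onto a subgroup exhibits the group as a semidirect product of the kernel by that subgroup.
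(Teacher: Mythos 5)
Your proposal is correct and follows essentially the same route as the paper: the paper likewise deduces the homomorphism property from Lemmas \ref{GAutToCAutLemmaTwo} and \ref{ImageOfRetractionLemma} and then declares the identity-on-$\QAutW$ restriction and the kernel identification ``clear from the definitions,'' which are exactly the short computations you spell out. Your extra care about the containment $\AutZ \subseteq \CAutW$ (settled, e.g., by noting that each partial conjugation $\chi_{iK}$ sends a clique subgroup $W(\Delta)$ to $v_i W(\Delta) v_i^{-1}$, since any vertex of $\Delta$ outside $K$ lies in $S_i$) addresses a point the paper leaves implicit but does not change the argument.
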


\begin{proof}
By Lemmas \ref{GAutToCAutLemmaTwo} and \ref{ImageOfRetractionLemma}, $r$ is a homomorphism $\CAutW \to \QAutW$.  It is clear from the definitions that
$\retractGAutWToCAutW$ restricts to the identity map on $\QAutW$ and $\retractGAutWToCAutW$ has kernel $\AutZ$.
\end{proof}

\begin{rem}[Tits' approach and Theorem \ref{TitsSplittingRecovered}]
For each $\Delta \in \MCS$, we may consider $W(\Delta)$ as a subgroup of $W_{\ab}$.  Then the union
$$G = \underset{\Delta \in \MCS}{\bigcup} W(\Delta) \subset W_{\ab}$$
is a groupoid in the usual way.  In case $W$ is a right-angled Coxeter group, Tits \cite{Tits} identifies $\QAutW$ with the
groupoid automorphisms $\Aut G$ of $G$ and constructs a section of the obvious homomorphism $\CAutW \to \Aut G$.  This identification
carries over in the case that $W$ is an arbitrary graph product of directly-indecomposable groups and a section of the homomorphism $\CAutW \to \Aut G$
is defined similarly.
\end{rem}

\section{The group $\AutZ$}\label{InnOutSplittingSection}

The primary goal of this section is to prove that $\AutZ = \InnW \rtimes \OutZ$ (Theorem \ref{CSemiDirect}).
Before achieving this in $\S$\ref{SplittingAutZSection}, we prove The Restricted Alphabet Rewriting Lemma
in $\S$\ref{RestrictedAlphabetRewritingSection}.

We remind the reader that $\AutZ$ is generated by the set of partial conjugations $\PC$.




\begin{rem}\label{WLOGAssumeConnected}
Although we shall not assume that $\Gamma$ is connected throughout, it is occasionally convenient to note that such as assumption
places no restriction on our study of $\AutZ$.
For suppose that $\Gamma$ is not connected.  Write $(\Gamma^+, \ordermap^+)$ for the labeled-graph obtained from $(\Gamma, \ordermap)$ as follows:
\begin{enumerate}
\item introduce a new vertex $v_0$ and extend $\ordermap$ to a function $\ordermap^+$ with domain $\{0, 1, 2, \dots, N\}$
by making a choice $$\ordermap(0) \in \{p^\alpha \; | \; p \hbox{ prime and }
\alpha \in \Nat\} \cup \{\infty\};$$
\item add an edge from $v_0$ to $v_i$ for each $1 \leq i \leq N$.
\end{enumerate}
Write $W^+ := W(\Gamma^+, \ordermap^+)$ and write $S_i^+$ for the star of $v_i$ in $\Gamma^+$. For each $1 \leq i \leq N$, the connected
components of $\Gamma \setminus S_i$ are identical to the connected
components of $\Gamma^+ \setminus S_i^+$.  The subgraph
$\Gamma^+ \setminus S_0^+$ is empty.  Since $\PC$ generates $\AutZ$, it follows that $\AutZ \cong
\Aut^0 W^+$.
\end{rem}

\subsection{The Restricted Alphabet Rewriting Lemma}\label{RestrictedAlphabetRewritingSection}

In this subsection we prove Theorem \ref{RetractionRewritingLemma}.  As an aside, we illustrate the power of The Restricted Alphabet Rewriting Lemma
by proving that $\PC$ is a minimal generating set for $\AutZ$.

Throughout, we fix a subgraph $\Omega \subseteq \Gamma$.  Recall the definitions of $pr_\Omega$, $\PC_\Omega$ and $\phi_\Omega$ given in the
Introduction.

\begin{lem}\label{PhiUpsilonIsWellDefined}
The map $\phi \mapsto \phi_\Omega$ is well-defined
\end{lem}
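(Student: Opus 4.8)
The plan is to show that the rule defining $\phi_\Omega$ does not depend on the choice of the conjugating elements $w_1, \dots, w_N$. Fix $\phi \in \AutZ$ and $1 \leq i \leq N$, and suppose $w_i, w_i' \in W$ both satisfy $\phi(v_i) = w_i v_i w_i^{-1} = w_i' v_i w_i'^{-1}$ (such elements exist since $\phi \in \AutZ$). Then $(w_i')^{-1} w_i$ lies in the centralizer of $v_i$, which by Lemma \ref{CentralizerOfAVertex} is the special subgroup $W(S_i)$; write $c := (w_i')^{-1} w_i \in W(S_i)$, so that $w_i = w_i' c$.

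Next I would use that $pr_\Omega \colon W \to W(\Omega)$ is a homomorphism (the standard retraction sending each vertex of $\Omega$ to itself and each vertex not in $\Omega$ to $1$, which clearly respects the defining relations of $W$), so that $pr_\Omega(w_i) = pr_\Omega(w_i')\, pr_\Omega(c)$. The key observation is that $pr_\Omega(c)$ commutes with $v_i$: writing $c$ as a word in the vertices of $S_i$, the element $pr_\Omega(c)$ is a word each of whose syllables is either a vertex of $S_i \cap \Omega$ or trivial, and every vertex of $S_i$ commutes with $v_i$ (including $v_i$ itself), so $pr_\Omega(c) \in C_W(v_i)$. Hence
$$pr_\Omega(w_i)\, v_i\, pr_\Omega(w_i)^{-1} = pr_\Omega(w_i')\, pr_\Omega(c)\, v_i\, pr_\Omega(c)^{-1}\, pr_\Omega(w_i')^{-1} = pr_\Omega(w_i')\, v_i\, pr_\Omega(w_i')^{-1}.$$
Since this holds for every $i$, the map $\phi_\Omega \colon V \to W$ is independent of the choices made, which is exactly the assertion that $\phi \mapsto \phi_\Omega$ is well-defined.

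I do not expect a genuine obstacle here; the proof is a short computation. The only points requiring care are invoking the correct description of the centralizer of a vertex (Lemma \ref{CentralizerOfAVertex}), noting that $pr_\Omega$ is an honest homomorphism so that it may be applied to the factorisation $w_i = w_i' c$, and the elementary remark that the image under $pr_\Omega$ of an element of $W(S_i)$ again centralizes $v_i$.
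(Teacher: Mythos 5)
Your argument is correct and is essentially the paper's own proof: both pass to the quotient $w_i^{-1}$ times the other conjugator, identify it with an element of the centralizer $W(S_i)$ via Lemma \ref{CentralizerOfAVertex}, and use that $pr_\Omega$ sends $W(S_i)$ into $W(S_i \cap \Omega) \subseteq C_W(v_i)$ so the projected factor cancels in the conjugation. No gaps.
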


\begin{proof}
Let $\phi \in \AutZ$ and $w_1, \dots, w_N \in W$ and $u_1, \dots, u_N \in W$ be such that
$\phi(v_i) = w_i v_i w_i^{-1} = u_i v_i u_i^{-1}$ for each $1 \leq i \leq N$.
Fix $1 \leq i \leq N$.  We must show that $pr_\Omega(w_i) . v_i . pr_\Omega(w_i)^{-1} = pr_\Omega(u_i) . v_i . pr_\Omega(u_i)^{-1}$.
Since  $w_i v_i w_i^{-1} = u_i v_i u_i^{-1}$, we have that $w_i^{-1} u_i$ is in the centralizer of $v_i$.
Recall that the centralizer of $v_i$ is generated by $S_i$ (Lemma \ref{CentralizerOfAVertex}).
Thus there exists $z_i \in \langle S_i \rangle$ such that
$u_i = w_i z_i$.  Since $pr_\Omega(S_i) \subseteq S_i \cup \{id\}$,
$pr_\Omega(z_i) \in \langle S_i \rangle$ and we have
\begin{eqnarray*}
pr_\Omega(w_i). v_i .pr_\Omega(w_i)^{-1} & = & pr_\Omega(u_i z_i). v_i .pr_\Omega(u_i z_i)^{-1} \\
 & = & pr_\Omega(u_i) .pr_\Omega(z_i) .v_i .pr_\Omega(z_i)^{-1} .pr_\Omega(u_i)^{-1} \\
  & = & pr_\Omega(u_i) .v_i .pr_\Omega(u_i)^{-1}.
\end{eqnarray*}
\end{proof}

\begin{lem}\label{PhiUpsilonIsAHomomorphism}
For each $\phi \in \AutZ$, the map $\phi_\Omega\!: \vertices \to W$ extends to a homomorphism $\phi_\Omega\!: W \to W$.
\end{lem}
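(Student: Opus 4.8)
The plan is to use the presentation (\ref{PresentationOfW}) of $W$ together with von Dyck's lemma: it suffices to check that the assignment $v_i \mapsto pr_\Omega(w_i).v_i.pr_\Omega(w_i)^{-1}$ kills every defining relator of $W$. There are two families of relators. For an order relator $v_i^{\ordermap(i)}$ with $\ordermap(i) < \infty$, the element $\phi_\Omega(v_i)$ is a conjugate of $v_i$, hence has the same (finite) order, so $\phi_\Omega(v_i)^{\ordermap(i)} = 1$; this case is immediate. The substance of the proof is the commutator relators $v_j v_k v_j^{-1} v_k^{-1}$ attached to adjacent vertices $v_j, v_k$ ($j<k$).

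For such a pair, write $a := pr_\Omega(w_j)$ and $b := pr_\Omega(w_k)$, both in $W(\Omega)$; since $pr_\Omega$ is a homomorphism, $b^{-1}a = pr_\Omega(w_k^{-1}w_j)$. Conjugating by $b^{-1}$ and invoking Lemma \ref{CentralizerOfAVertex} (the centralizer of $v_k$ is $\langle S_k \rangle$), the elements $\phi_\Omega(v_j) = a v_j a^{-1}$ and $\phi_\Omega(v_k) = b v_k b^{-1}$ commute if and only if $(b^{-1}a) v_j (b^{-1}a)^{-1} \in \langle S_k \rangle$. Running the identical computation for $\phi$ itself — which is a genuine homomorphism, so $\phi(v_j)$ and $\phi(v_k)$ commute — shows $(w_k^{-1}w_j) v_j (w_k^{-1}w_j)^{-1} \in \langle S_k \rangle$. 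Thus, writing $t := w_k^{-1}w_j$, it remains to prove the purely group-theoretic claim: if $t \in W$ and $t v_j t^{-1} \in \langle S_k \rangle$, then $pr_\Omega(t) v_j pr_\Omega(t)^{-1} \in \langle S_k \rangle$.

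To prove the claim, note that $v_j \in S_k$ (by adjacency), so both $v_j$ and $s := t v_j t^{-1}$ lie in $W(S_k)$; applying the retraction $pr_{S_k}\colon W \to W(S_k)$ to $s = t v_j t^{-1}$ gives $s = pr_{S_k}(t) v_j pr_{S_k}(t)^{-1}$, whence $pr_{S_k}(t)^{-1} t$ centralizes $v_j$ and so lies in $\langle S_j \rangle$ (Lemma \ref{CentralizerOfAVertex} once more). Write $t = pr_{S_k}(t).z$ with $z \in \langle S_j \rangle$. Because $pr_\Omega$ is a homomorphism that merely erases generators outside $\Omega$, it carries $W(S_k)$ into $W(S_k)$ and $\langle S_j \rangle$ into $\langle S_j \rangle$ (using Lemma \ref{SpecialSubgroupsEmbed}); hence $pr_\Omega(t) = c.pr_\Omega(z)$ with $c := pr_\Omega(pr_{S_k}(t)) \in W(S_k)$ and $pr_\Omega(z) \in \langle S_j \rangle$ centralizing $v_j$. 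Therefore $pr_\Omega(t) v_j pr_\Omega(t)^{-1} = c v_j c^{-1} \in W(S_k) = \langle S_k \rangle$, which establishes the claim and finishes the commutator relators.

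The main obstacle is exactly this last step: transferring ``a conjugate of $v_j$ lands in $\langle S_k \rangle$'' from the original conjugator $w_k^{-1}w_j$ to its $\Omega$-retraction. The device that makes it work is the double application of Lemma \ref{CentralizerOfAVertex} — first to split $t$ as $pr_{S_k}(t).z$, then to absorb $pr_\Omega(z)$ — combined with the elementary fact that $pr_\Omega$ only deletes generators and so respects the various special subgroups in play. Everything else (the order relators, and the bookkeeping for von Dyck's lemma) is routine.
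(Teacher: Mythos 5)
Your proof is correct, and while it follows the same overall skeleton as the paper's (check the defining relators; the order relators are immediate because $\phi_\Omega(v_i)$ is a conjugate of $v_i$; the commutator relators are the real content, handled via Lemma \ref{CentralizerOfAVertex}), it justifies the key step differently. The paper's proof begins with the assertion that, for adjacent $v_i, v_j$, a conjugating automorphism conjugates both by a \emph{common} element $w$, so that $w_i = w z_i$ and $w_j = w z_j$ with $z_i \in \langle S_i \rangle$, $z_j \in \langle S_j \rangle$; the commutator of $\phi_\Omega(v_i)$ and $\phi_\Omega(v_j)$ is then collapsed by a direct computation using $pr_\Omega(z_i) \in \langle S_i \rangle$ and $pr_\Omega(z_j) \in \langle S_j \rangle$. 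You never invoke that common-conjugator fact (which the paper states without proof): you use only that $\phi(v_j)$ and $\phi(v_k)$ commute, reduce to the transfer claim that $t v_j t^{-1} \in \langle S_k \rangle$ implies $pr_\Omega(t) v_j \, pr_\Omega(t)^{-1} \in \langle S_k \rangle$, and prove it by applying the retraction $pr_{S_k}$ to the equation $t v_j t^{-1} = s$ to obtain the factorization $t = pr_{S_k}(t)\, z$ with $z$ centralizing $v_j$. That factorization (an element of $W(S_k)$ times an element of $\langle S_j \rangle$) is exactly the decomposition that underlies the paper's asserted common conjugator, so in effect your argument supplies a self-contained proof of the ingredient the paper takes for granted, at the cost of a slightly longer reduction; the paper's route is shorter once that structural fact about $\AutZ$ is granted.
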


\begin{proof}
Let $\phi \in \AutZ$ and $w_1, \dots, w_N \in W$ be such that
$\phi(v_i) = w_i v_i w_i^{-1}$ for each $1 \leq i \leq N$.  We must show that the map $\phi_\Omega$ `preserves' the defining relations
of $W$.

Let $1 \leq i \leq N$. Since $\phi_\Omega(v_i)$ is conjugate to $v_i$,  it has the same order as $v_i$ and the relation
$v_i^{\ordermap(i)}$ is preserved.

Let $1 \leq i < j \leq N$ be such that $v_i$ and $v_j$ are adjacent.  Since $\phi \in \AutZ$, it follows that there exists $w \in W$
such that $\phi(v_i) = w v_i w^{-1}$ and $\phi(v_j) = w v_j w^{-1}$.  Then $w_i = w z_i$ for some $z_i \in \langle S_i \rangle$ and
$w_j = w z_j$ for some $z_j \in \langle S_j \rangle$. So $w_i^{-1} w_j = z_i^{-1} z_j$ and
$pr_\Omega(w_i^{-1}).pr_\Omega(w_j) = pr_\Omega(z_i^{-1}).pr_\Omega(z_j).$
We have
\begin{eqnarray*}
& & \phi_\Omega(v_i).\phi_\Omega(v_j).\phi_\Omega(v_i)^{-1}.\phi_\Omega(v_i)^{-1} \\
& = & pr_\Omega(w_i). v_i .pr_\Omega(w_i^{-1}). pr_\Omega(w_j). v_j .pr_\Omega(w_j)^{-1} \\
&   & \;\;\;\;\;\;\;\;\;\;\;\;\;\;\;\;\;\; .pr_\Omega(w_i). v_i^{-1} .pr_\Omega(w_i)^{-1} .pr_\Omega(w_j) .v_j^{-1} .pr_\Omega(w_j)^{-1}\\
& = & pr_\Omega(w_i) .v_i .pr_\Omega(z_i^{-1}) .pr_\Omega(z_j). v_j .pr_\Omega(z_j)^{-1} \\
&   & \;\;\;\;\;\;\;\;\;\;\;\;\;\;\;\;\;\; .pr_\Omega(z_i) .v_i^{-1} .pr_\Omega(z_i)^{-1} .pr_\Omega(z_j). v_j^{-1}. pr_\Omega(w_j)^{-1}\\
& = & pr_\Omega(w_i) .pr_\Omega(z_i^{-1}) .v_i  v_j  v_i^{-1} v_j^{-1} .pr_\Omega(z_j) .pr_\Omega(w_j)^{-1}\\
& = & pr_\Omega(w_i z_i^{-1}) .1 .pr_\Omega(w_j z_j^{-1})^{-1}\\
& = & pr_\Omega(w). pr_\Omega(w)^{-1}\\
& = & 1,
\end{eqnarray*}
and the relation $v_i v_j v_i^{-1} v_j^{-1}$ is preserved.
\end{proof}

\begin{lem}\label{MUpsilonIsAHomomorphism}
For $\phi, \theta \in \AutZ$, $(\phi \circ \theta)_\Omega = \phi_\Omega \circ \theta_\Omega$.
\end{lem}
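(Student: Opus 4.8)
The plan is to check the identity on the generating set $\vertices$ and then conclude by a homomorphism argument. Since $\AutZ$ is a subgroup of $\AutW$ we have $\phi \circ \theta \in \AutZ$, so by Lemma \ref{PhiUpsilonIsAHomomorphism} (applied to $\phi \circ \theta$, and to $\phi$ and $\theta$ separately) both $(\phi \circ \theta)_\Omega$ and $\phi_\Omega \circ \theta_\Omega$ are homomorphisms $W \to W$; hence it suffices to prove that $(\phi \circ \theta)_\Omega(v_i) = \phi_\Omega(\theta_\Omega(v_i))$ for each $1 \leq i \leq N$.

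First I would fix conjugators $w_1, \dots, w_N, u_1, \dots, u_N \in W$ with $\phi(v_i) = w_i v_i w_i^{-1}$ and $\theta(v_i) = u_i v_i u_i^{-1}$. Then $(\phi \circ \theta)(v_i) = \bigl(\phi(u_i) w_i\bigr) v_i \bigl(\phi(u_i) w_i\bigr)^{-1}$, so by the well-definedness of $\phi \mapsto \phi_\Omega$ (Lemma \ref{PhiUpsilonIsWellDefined}) I am free to compute $(\phi \circ \theta)_\Omega$ using the conjugators $\phi(u_i) w_i$, obtaining
\[
(\phi \circ \theta)_\Omega(v_i) = pr_\Omega(\phi(u_i)) . pr_\Omega(w_i) . v_i . pr_\Omega(w_i)^{-1} . pr_\Omega(\phi(u_i))^{-1},
\]
using that $pr_\Omega$ is a homomorphism.

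The key step is the identity $pr_\Omega \circ \phi = \phi_\Omega \circ pr_\Omega$ of homomorphisms $W \to W$, which I would prove by evaluating on a vertex $v_j$: if $v_j \in \Omega$ then $pr_\Omega(v_j) = v_j$ and both sides equal $pr_\Omega(w_j) . v_j . pr_\Omega(w_j)^{-1}$ (using $pr_\Omega(\phi(v_j)) = pr_\Omega(w_j v_j w_j^{-1})$ on the left and the definition of $\phi_\Omega$ on the right), while if $v_j \notin \Omega$ then $pr_\Omega(v_j) = 1$ and both sides equal $1$. Applying this with the argument $u_i$ gives $pr_\Omega(\phi(u_i)) = \phi_\Omega(pr_\Omega(u_i))$; substituting into the display above, and using $\phi_\Omega(v_i) = pr_\Omega(w_i) . v_i . pr_\Omega(w_i)^{-1}$ together with the fact that $\phi_\Omega$ is a homomorphism, yields
\[
(\phi \circ \theta)_\Omega(v_i) = \phi_\Omega(pr_\Omega(u_i)) . \phi_\Omega(v_i) . \phi_\Omega(pr_\Omega(u_i))^{-1} = \phi_\Omega\bigl(pr_\Omega(u_i) . v_i . pr_\Omega(u_i)^{-1}\bigr) = \phi_\Omega(\theta_\Omega(v_i)),
\]
as required. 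I do not expect a genuine obstacle here; the one point that requires care is that the freedom to compute $(\phi \circ \theta)_\Omega$ using the specific conjugators $\phi(u_i) w_i$ — rather than some other choice — rests on Lemma \ref{PhiUpsilonIsWellDefined}.
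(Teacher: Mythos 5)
Your proposal is correct. It proves the same statement the paper proves (the paper works with $(\theta\circ\phi)_\Omega=\theta_\Omega\circ\phi_\Omega$, which is the lemma with the letters renamed), and the core idea coincides: write $(\phi\circ\theta)(v_i)=\bigl(\phi(u_i)w_i\bigr)v_i\bigl(\phi(u_i)w_i\bigr)^{-1}$, project the conjugator, and commute the projection past $\phi$. Where you differ is in how that commutation is justified. The paper verifies it at the level of explicit words: it builds the words $\mathcal{T}_i$ and $\mathcal{T}'_i$ by letter-by-letter substitution and deletion, and its observations (OB1), (OB2) are exactly a word-level proof that $pr_\Omega(\theta(\mathcal{W}_i))=\theta_\Omega(pr_\Omega(\mathcal{W}_i))$ in $W$. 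You instead isolate the intertwining identity $pr_\Omega\circ\phi=\phi_\Omega\circ pr_\Omega$ as an equality of homomorphisms $W\to W$ and prove it by checking it on each vertex $v_j$ (splitting into $v_j\in\Omega$ and $v_j\notin\Omega$), which is legitimate since both composites are homomorphisms and $\vertices$ generates $W$; you then feed in $u_i$ and use Lemma \ref{PhiUpsilonIsWellDefined} to justify computing $(\phi\circ\theta)_\Omega$ with the specific conjugators $\phi(u_i)w_i$ (a point the paper uses implicitly). Your packaging buys a shorter, more structural argument that leans on Lemmas \ref{PhiUpsilonIsWellDefined} and \ref{PhiUpsilonIsAHomomorphism} and avoids the combinatorial bookkeeping; the paper's word-level construction makes the same fact visible by direct manipulation without having to state the intertwining relation separately. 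Both are sound.
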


\begin{proof}
Fix $1 \leq i \leq N$. It suffices to show that $(\theta \circ \phi )_\Omega (v_i) = \theta_\Omega \circ \phi_\Omega(v_i)$.
Write $\vertices^\ast$ for the set of (not necessarily reduced) words in the alphabet $\vertices^{\pm 1}$.
Let $\mathcal{W}_1, \dots, \mathcal{W}_N \in \vertices^\ast$ and $\mathcal{U}_1, \dots, \mathcal{U}_N \in \vertices^\ast$
be such that $\phi(v_j) = \mathcal{W}_j v_j \mathcal{W}_j^{-1}$ and $\theta(v_j) = \mathcal{U}_j v_j \mathcal{U}_j^{-1}$
for each $1 \leq j \leq N$.  Let $\mathcal{T}_i$ be the word constructed from $\mathcal{W}_i$ as follows:
\begin{enumerate}
\item for each $1 \leq j \leq N$ and $\delta = \pm 1$, replace each occurrence of $v_j^\delta$ by $\mathcal{U}_j v_j^\delta \mathcal{U}_j^{-1}$;
\item append the word $\mathcal{U}_i$ to the resulting word;
\item omit those letters not in $\Omega$ from the resulting word.
\end{enumerate}
It is clear that $\mathcal{T}_i = pr_\Omega (\theta(\mathcal{W}_i) \mathcal{U}_i)$ and hence
$(\theta \phi)_\Omega (v_i) = \mathcal{T}_i v_i \mathcal{T}_i^{-1}$ (with equality in $W$).
Observe the following in the construction of $\mathcal{T}_i$:
\begin{enumerate}
\item [(OB1)] if $v_j \not \in \Omega$, then each occurrence of $v_j^\delta$ in $\mathcal{W}_j$ is eventually replaced by the word
$pr_\Omega(U_j) . pr_\Omega(U_j)^{-1}$, which is, of course, trivial in $W$;
\item [(OB2)] if $v_j \in \Omega$, then each occurrence of $v_j^\delta$ in $\mathcal{W}_j$ is eventually replaced by the word
$pr_\Omega(U_j) . v_j^\delta . pr_\Omega(U_j)^{-1}$.
\end{enumerate}
Let $\mathcal{T}'_i$ be the word constructed from $\mathcal{W}_i$ as follows:
\begin{enumerate}
\item for each $1 \leq j \leq N$ such that $v_j \not \in \Omega$ and each $\delta = \pm 1$, omit each occurrence of $v_j^\delta$;
\item for each $1 \leq j \leq N$ such that $v_j \in \Omega$ and each $\delta = \pm 1$,
replace each occurrence of $v_j^\delta$ by $pr_\Omega(\mathcal{U}_j). v_j^\delta .pr_\Omega(\mathcal{U}_j)^{-1}$;
\item append the word $pr_\Omega(\mathcal{U}_i)$ to the resulting word.
\end{enumerate}
It follows from (OB1) and (OB2) that $\mathcal{T}'_i = \mathcal{T}_i$ (with equality in $W$).
It is clear from the construction of $\mathcal{T}'_i$ that $\mathcal{T}'_i = (\theta_\Omega \circ pr_\Omega(\mathcal{W}_i)) . pr_\Omega(\mathcal{U}_i)$ (with equality in $W$).
We calculate the following (with all equalities in $W$):
\begin{eqnarray*}
 (\theta \phi)_\Omega (v_i) & = & \mathcal{T}_i v_i \mathcal{T}_i^{-1}\\
 & = & \mathcal{T}'_i v_i (\mathcal{T}'_i)^{-1} \\
 & = & (\theta_\Omega \circ pr_\Omega(\mathcal{W}_i)) . pr_\Omega(\mathcal{U}_i) . v_i . pr_\Omega(\mathcal{U}_i)^{-1} . (\theta_\Omega \circ pr_\Omega(\mathcal{W}_i))^{-1}\\
 & = & \theta_\Omega (pr_\Omega(\mathcal{W}_i)) . \theta_\Omega(v_i) . \theta_\Omega(pr_\Omega(\mathcal{W}_i)^{-1})\\
 & = & \theta_\Omega (pr_\Omega(\mathcal{W}_i). v_i . pr_\Omega(\mathcal{W}_i)^{-1})\\
 & = & \theta_\Omega \circ \phi_\Omega (v_i).
\end{eqnarray*}
\end{proof}

We now prove the main result of the subsection.

\begin{proof}[Proof of Theorem \ref{RetractionRewritingLemma}]
By Lemma \ref{MUpsilonIsAHomomorphism}, $(\phi^{-1})_\Omega \circ \phi_\Omega = (\phi^{-1} \circ \phi)_\Omega = id_\Omega = id$ and
$\phi_\Omega$ is an automorphism of $W$.  So the map $\phi \mapsto \phi_\Omega$ is a map $\AutZ \to \AutZ$.
It follows from Lemma \ref{MUpsilonIsAHomomorphism} that $\phi \mapsto \phi_\Omega$ is a homomorphism $\AutZ \to \AutZ$.
It is clear from the definitions that $$(\chi_{iK})_\Omega  = \left\{\begin{array}{cl}
                                                             \chi_{iK} & \text{if } \chi_{iK} \in \PC_\Omega \\
                                                                id & \text{if } \chi_{iK} \not \in \PC_\Omega. \\
                                                              \end{array}\right.$$
It follows that $\phi \mapsto \phi_\Upsilon$ is a retraction homomorphism  $\AutZ \to \langle \PC_\Omega \rangle$.
\end{proof}

We conclude this subsection by noting the following immediate application of The Restricted Alphabet Rewriting Lemma.

\begin{cor}\label{PCMinimal}
The set $\PC$ is a minimal generating set for $\AutZ$.
\end{cor}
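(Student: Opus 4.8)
The plan is to show that no single partial conjugation $\chi_{iK}$ lies in the subgroup generated by the others, so that $\PC$ cannot be shrunk while still generating $\AutZ$. Fix $\chi_{iK} \in \PC$, where $K$ is a connected component of $\Gamma \setminus S_i$, and suppose for contradiction that $\chi_{iK} \in \langle \PC \setminus \{\chi_{iK}\}\rangle$. The idea is to find a subgraph $\Omega \subseteq \Gamma$ for which the retraction $\phi \mapsto \phi_\Omega$ of Theorem~\ref{RetractionRewritingLemma} (equivalently, the Restricted Alphabet Rewriting Lemma, Corollary~\ref{RestrictedAlphabetRewriting}) kills every element of $\PC \setminus \{\chi_{iK}\}$ that could appear in a word for $\chi_{iK}$, yet does not kill $\chi_{iK}$ itself — producing a contradiction.

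First I would pick a single vertex $v_k \in K$ and set $\Omega$ to be the subgraph on $\{v_i, v_k\}$ (if $v_i$ and $v_k$ happen to be adjacent this is a complete graph, but that causes no problem; if one prefers a cleaner statement one can instead take $\Omega$ on $\{v_i\} \cup \{v_k\}$ and note $v_i \notin S_i$ forces $v_i \notin \Omega$-trivialities, so $pr_\Omega(w_i)$ involves only $v_i, v_k$). Observe that $\chi_{iK}(v_k) = v_i v_k v_i^{-1}$ while $\chi_{iK}(v_j) = v_j$ for all $j \neq k$ with $v_j \notin K$, and for the remaining $v_j \in K$ we have $\chi_{iK}(v_j) = v_i v_j v_i^{-1}$; in all cases the conjugator lies in $\langle v_i\rangle \subseteq W(\Omega)$. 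Hence $(\chi_{iK})_\Omega = \chi_{iK}$ by the displayed formula in the proof of Theorem~\ref{RetractionRewritingLemma}, since $\chi_{iK} \in \PC_\Omega$ (its operating letter $v_i$ lies in $\Omega$). Now apply the retraction to a hypothetical word $\chi_{iK} = \prod_\ell \chi_{i_\ell K_\ell}^{\pm 1}$ with each $\chi_{i_\ell K_\ell} \neq \chi_{iK}$: by Corollary~\ref{RestrictedAlphabetRewriting} we may delete every factor not in $\PC_\Omega$, so $\chi_{iK} = (\chi_{iK})_\Omega$ is a product of partial conjugations from $\PC_\Omega \setminus \{\chi_{iK}\}$. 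But $\PC_\Omega$ consists only of partial conjugations with operating letter $v_i$ or $v_k$ and domain a connected component of $\Gamma \setminus S_i$ resp. $\Gamma \setminus S_k$; I must argue this forces $\chi_{iK}$ itself back into the list, or else that the product of the remaining ones cannot equal $\chi_{iK}$.

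The cleanest way to finish is to evaluate both sides at $v_k$ after a further restriction, or to exploit that the operating letter and domain of a partial conjugation are recoverable from its action. Concretely: the generators in $\PC_\Omega$ with operating letter $v_k$ fix $v_i$ (since $v_i \notin K$... careful: $v_i$ may or may not lie in the relevant component of $\Gamma\setminus S_k$) — so I would instead abelianize or pass to the induced action on $W(\Omega)$, where $\chi_{iK}$ restricts to the single map $v_k \mapsto v_i v_k v_i^{-1}$, $v_i \mapsto v_i$, which is a well-understood element of $\Aut^0 W(\Omega)$, and show the other members of $\PC_\Omega$ generate a subgroup not containing it — e.g.\ by a length/syllable argument in the free product $\langle v_i\rangle * \langle v_k\rangle$-type quotient, noting $\chi_{iK}$ is not inner in $W(\Omega)$ while candidate relations are controlled. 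The main obstacle is precisely this last combinatorial step: ruling out that $\chi_{iK}$ equals a nontrivial product of the \emph{other} partial conjugations supported on $\{v_i,v_k\}$; one wants to choose $\Omega$ small enough (perhaps even a single well-chosen edge or non-edge, or a three-vertex subgraph isolating $v_i$, $v_k$, and one witness of disconnection) that $\PC_\Omega \setminus \{\chi_{iK}\}$ is visibly too small — ideally empty, which happens when $\Gamma\setminus S_i$ has $v_k$ in a singleton component and $\Gamma \setminus S_k$ restricted to $\Omega$ has no nontrivial component, making $\PC_\Omega = \{\chi_{iK}\}$ outright and giving the contradiction immediately.
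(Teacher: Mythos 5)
Your overall strategy is the right one---apply the Restricted Alphabet Rewriting Lemma to a hypothetical word for $\chi_{iK}$ in the other generators---but the proposal stops exactly where the proof has to be made: you concede that ``the main obstacle is precisely this last combinatorial step,'' namely ruling out that $\chi_{iK}$ is a product of the \emph{other} elements of $\PC_\Omega$, and the fallback you sketch (hoping $\PC_\Omega \setminus \{\chi_{iK}\}$ is empty, or an unspecified length/syllable argument) does not work in general. The difficulty is self-inflicted by the choice $\Omega = \{v_i, v_k\}$: the hypothesis of Corollary \ref{RestrictedAlphabetRewriting} concerns only the conjugating elements $z_j$, and for $\chi_{iK}$ these all lie in $\langle v_i \rangle$, so the domain vertex $v_k$ need not be put into $\Omega$ at all. (Incidentally, your worry that $v_i$ and $v_k$ might be adjacent is vacuous: $v_k \in K \subseteq \Gamma \setminus S_i$ forces $d(v_i, v_k) \geq 2$.) Including $v_k$ drags all partial conjugations with operating letter $v_k$ into $\PC_\Omega$, and these are precisely the generators you then cannot dispose of.

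The paper's proof takes $\Omega = \{v_i\}$, so that $\PC_\Omega = \{\chi_{iQ} \; | \; Q \hbox{ a connected component of } \Gamma \setminus S_i\}$. After rewriting, the word $\U'$ involves only these generators, which pairwise commute and have pairwise disjoint domains; since every $\chi_{iQ}$ with $Q \neq K$ fixes each vertex of $K$, while $\chi_{iK}(v_k) = v_i v_k v_i^{-1} \neq v_k$, the letter $\chi_{iK}$ must occur in $\U'$ with exponent sum $1$, hence it already occurred in the original word $\U$. Thus no word in $\bigl(\PC \setminus \{\chi_{iK}\}\bigr)^{\pm 1}$ spells $\chi_{iK}$. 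If you shrink your $\Omega$ to $\{v_i\}$ (or, with your $\Omega$, perform the further restriction to $\{v_i\}$ that you allude to but never carry out), your argument closes and coincides with the paper's; as written, the key exclusion step is a genuine gap.
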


\begin{proof}
Let $\chi_{iK} \in \PC$ and let $\U$ be a
word in the alphabet $\PC^{\pm 1}$ such that
$\U = \chi_{iK}$ (with equality in $\AutZ$). It follows from the Restricted Alphabet Rewriting
Lemma that, simply by omitting some letters, $\U$ may be rewritten
as a word $\U'$ in the alphabet
$$\{\chi_{iQ} \; | \; Q \hbox{ a connected component of } \Gamma \setminus S_i\}^{\pm 1}.$$
But the letters in $\U'$ commute pairwise, so we must have
that $\chi_{iK}$ appears with exponent sum 1 in $\U'$ and hence also
with exponent sum 1 in $\U$.  Thus no word in the alphabet
$$\Bigl(\PC \setminus \{\chi_{iK}\}\Bigr)^{\pm 1}$$
can spell $\chi_{iK}$.
\end{proof}

\subsection{A splitting of $\AutZ$}\label{SplittingAutZSection}

We now define a subset $\PCMinus \subset \PC$.  We will show that $\OutZ$,
the subgroup generated by $\PCMinus$, does not include any non-trivial inner automorphisms
and that it is isomorphic to $\OutZQ$.  Informally, one might understand the construction of $\PCMinus$ from $\PC$
as removing `just enough' automorphisms to prevent the elements of $(\PCMinus)^{\pm 1}$ from spelling a non-trivial inner-automorphism.

Let $\basicinners$ denote the following set of inner automorphisms $$\basicinners = \{w \mapsto v_i w v_i^{-1} \; | \; 1 \leq i \leq N \hbox{ and } \Gamma \setminus S_i \neq \emptyset\}.$$
It is clear that $\basicinners$ generates $\InnW$. The commuting product $\prod_K \chi_{iK}$, taken over all connected components $K$ of $\Gamma \setminus S_i$, is the inner automorphism
$(w \mapsto v_i w v_i^{-1}) \in \basicinners$.
If, starting with $\PC$, we remove one $\chi_{iK}$ for each $i$ such that $\Gamma \setminus S_i \neq \emptyset$, then the union of the resulting set
and $\basicinners$ is a generating set for $\AutZ$.  We now do so systematically.

\begin{defn}[$\PCMinus$ and $\OutZ$]\label{DefnOfQ}
For each $1 \leq i \leq N$ such that $\Gamma \setminus S_i \neq \emptyset$, let
$j_i$ be minimal such that $v_{j_i} \in \Gamma \setminus S_i$.
Define
$$\PCMinus := \{\chi_{i K} \in \PC \; | \; v_{j_i} \not \in K \}.$$
Write $\OutZ$ for the subgroup of $\AutZ$ generated by
$\PCMinus$.  (In $\S$\ref{ExampleSection} we write down $\PCMinus$ for an example.)
\end{defn}

\begin{rem}\label{RemarkOnQ}
Observe the following properties of $\PCMinus$:
\begin{enumerate}
\item As in Corollary \ref{PCMinimal}, the Restricted Alphabet Rewriting
Lemma may be used to show that the set $\basicinners \cup \PCMinus$
is a minimal generating set for $\AutZ$.
\item For each $1 \leq i \leq N$, either $\Gamma \setminus S_i =
\emptyset$ or the set $\PC \setminus \PCMinus$
contains exactly one element of the form $\chi_{i K}$.
\item \label{PropertyOfQ} If $\chi_{i K} \in \PCMinus$, then $v_1 \not \in K$.
If $\chi_{i K} \in \PCMinus$ and $d(v_1, v_i) \leq 1$, then $v_2 \not
\in K$.  In general, if $\chi_{i K} \in \PCMinus$ and $d(v_j, v_i) \leq 1$ for each $1 \leq j \leq k,$ then $v_{k+1} \not \in K$.
\item The set $\PCMinus$ depends on the ordering of $\vertices$
defined by the indexing.  For the work in this section, the ordering
is unimportant.
\end{enumerate}
\end{rem}

\begin{lem}\label{CInnerAndQ}
$\OutZ \cap \Inner = \{id\}.$
\end{lem}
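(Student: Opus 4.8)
The plan is to use the Restricted Alphabet Rewriting Lemma (Corollary \ref{RestrictedAlphabetRewriting}) to reduce to a combinatorial statement about commuting partial conjugations, in the spirit of the proof of Corollary \ref{PCMinimal}. First I would take a non-trivial element $\phi \in \OutZ \cap \Inner$. Being inner, $\phi$ is of the form $w \mapsto z w z^{-1}$; write $z$ as a reduced word $v_{k_1}^{\alpha_1} \cdots v_{k_r}^{\alpha_r}$ with $r$ minimal. By Remark \ref{WLOGAssumeConnected} we may (and I would) assume $\Gamma$ is connected. The element $\phi$ is also spelled by some word $\U$ in the alphabet $\PCMinus^{\pm 1}$. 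The key point is that for each vertex $v_i$, the generators of $\PCMinus$ with operating letter $v_i$ are of the form $\chi_{iK}$ with $v_{j_i} \notin K$, so omitting one component from the `full conjugation by $v_i$'; in particular the product over all those $K$ conjugates $v_{j_i}$ by $v_i$ only trivially — but more importantly these partial conjugations have a restricted domain.

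Next I would pick the letter $v_{k_1}$ appearing first in the reduced word for $z$ and apply the Restricted Alphabet Rewriting Lemma with a carefully chosen $\Omega$. The natural choice is to look at where $v_{j_i}$ lives: since every $\chi_{iK} \in \PCMinus$ fixes $v_{j_i}$ (as $v_{j_i} \notin K$), the automorphism $\phi$ fixes $v_{j_i}$ for the appropriate index — but inner conjugation $w \mapsto z w z^{-1}$ fixes $v_{j_i}$ only if $z$ commutes with $v_{j_i}$, i.e.\ $z \in \langle S_{j_i}\rangle$ by Lemma \ref{CentralizerOfAVertex}. Actually the cleaner route: by property (\ref{PropertyOfQ}) of Remark \ref{RemarkOnQ}, for \emph{any} $\chi_{iK} \in \PCMinus$ we have $v_1 \notin K$, so $\chi_{iK}$ fixes $v_1$; hence $\phi(v_1) = v_1$, forcing $z$ to commute with $v_1$, so $z \in \langle S_1 \rangle$, i.e.\ $z$ is a word in $v_1$ and its neighbours. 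Then property (\ref{PropertyOfQ}) again: since $d(v_j, v_1) \le 1$ for every $v_j$ occurring in (a reduced word for) $z$, we get $v_{k+1} \notin K$ for all the relevant $K$, and iterating this argument up the indexing shows each $\chi_{iK} \in \PCMinus$ occurring in $\U$ actually fixes every vertex of a reduced word for $z$; combined with the Restricted Alphabet Rewriting Lemma applied with $\Omega = \supp(z) \cup \{v_i : \chi_{iK} \text{ appears}\}$ this should pin $z$ down to the point where the only possibility is $z = 1$.

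The main obstacle I anticipate is making the inductive descent along the indexing rigorous: knowing $\phi$ fixes $v_1, \dots, v_k$ does not literally force the word $z$ to avoid those letters (inner automorphisms by central-ish elements can fix a vertex without the conjugator avoiding it), so I will have to track the structure of $z$ modulo the centralizers $\langle S_i \rangle$ carefully — realistically, I would argue that $z$ lies in the subgroup generated by vertices $v_i$ with $\Gamma \setminus S_i \ne \emptyset$ (which is where $\Inner$ sits, by Lemma \ref{CenterOfW}) and then run the Restricted Alphabet Rewriting Lemma to rewrite $\U$ over $\PCMinus_\Omega^{\pm 1}$ for $\Omega$ a suitable star, reducing to a genuinely smaller graph and invoking induction on $N$. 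The base cases ($N$ small, or $\Gamma$ complete so $\AutZ$ trivial) are immediate, and the inductive step is exactly the bookkeeping that Definition \ref{DefnOfQ} was rigged to make work: $\PCMinus$ has been thinned precisely enough that no non-empty reduced word in $\PCMinus^{\pm 1}$ can have all its "defect components" cancel to produce a full conjugation.
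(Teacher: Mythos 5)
Your first step is exactly the paper's: every $\chi_{iK} \in \PCMinus$ fixes $v_1$ by Remark \ref{RemarkOnQ}(\ref{PropertyOfQ}), so $\phi(v_1)=v_1$, hence $z \in W(S_1)$ by Lemma \ref{CentralizerOfAVertex}, and the Restricted Alphabet Rewriting Lemma with $\Omega = S_1$ lets you rewrite $\U$ over the generators whose operating letter lies in $S_1$. But the iteration is where your proposal has a genuine gap, and you half-acknowledge it yourself. You invoke Remark \ref{RemarkOnQ}(\ref{PropertyOfQ}) with the hypothesis ``$d(v_j,v_1)\le 1$ for every $v_j$ occurring in a reduced word for $z$'', but that property says nothing of the sort: its hypothesis is that the \emph{operating letter} $v_i$ of a generator $\chi_{iK}\in\PCMinus$ satisfies $d(v_j,v_i)\le 1$ for $j=1,\dots,k$, and only then does it conclude $v_{k+1}\notin K$. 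Knowing where $z$ is supported gives you no control over the domains $K$; what does is knowing which operating letters survive the rewriting. Your fallback plan --- rewrite over $\PC_\Omega$ for $\Omega$ a star and ``reduce to a genuinely smaller graph, invoking induction on $N$'' --- is also not justified as stated: the elements of $\PC_\Omega$ are partial conjugations of $W$ whose domains are components of $\Gamma\setminus S_j$, not partial conjugations of the graph product on $\Omega$, so restricting to a star does not produce an instance of the same lemma for a smaller labeled graph, and $\PCMinus$ for the subgraph need not correspond to $\PCMinus\cap\PC_\Omega$.

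The paper closes exactly this gap without any induction on $N$ or passage to a subgraph. Set $\PCT_i := \{\chi_{jK}\in\PCMinus \mid v_j \in S_i\}$ and induct on the vertex index $i$: if $\phi$ is spelled by a word over $(\PCT_1\cap\dots\cap\PCT_i)^{\pm1}$, then every generator in that alphabet has operating letter adjacent to each of $v_1,\dots,v_i$, so Remark \ref{RemarkOnQ}(\ref{PropertyOfQ}), applied to those operating letters, shows every such generator fixes $v_{i+1}$; hence $\phi(v_{i+1})=v_{i+1}$, so $z\in W(S_{i+1})$ by Lemma \ref{CentralizerOfAVertex}, and the Restricted Alphabet Rewriting Lemma with $\Omega=S_{i+1}$ deletes the generators whose operating letter is outside $S_{i+1}$, yielding a word over $(\PCT_1\cap\dots\cap\PCT_{i+1})^{\pm1}$. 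After $N$ steps the alphabet consists of generators whose operating letter is adjacent to every vertex of $\Gamma$; for such a vertex $\Gamma\setminus S_j=\emptyset$, so no partial conjugation with that operating letter exists, the alphabet is empty, and $\phi=id$. This is the bookkeeping your sketch is missing: the descent is along all vertices in the fixed ordering, driven by the operating letters of the surviving generators, not by the support of $z$.
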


\begin{proof}
For each $1 \leq i \leq N$, write
$\PCT_i := \{\chi_{j K} \in \PCMinus \; | \; v_j \in S_i\}.$  Write $\PCT := {\cap}_{i=1}^N \PCT_i$.
Suppose that $\phi \in \Inner \cap \OutZ$, say $\phi(v_j) = w v_j w^{-1}$ for each $1 \leq j \leq N$.  We shall use induction to show that
$\phi \in \langle \PCT \rangle$.

Since $\phi \in \OutZ$, $\phi$ may be written as a word $\Phi_0$ in the alphabet $(\PCMinus)^{\pm 1}$.
By Remark \ref{RemarkOnQ}(\ref{PropertyOfQ}), each element of
$\PCMinus$ acts trivially on $v_1$. It follows from The Deletion
Condition that $w$ is in the centralizer of $v_1$. By Lemma \ref{CentralizerOfAVertex}, $w \in W(S_1)$. By
the Restricted Alphabet Rewriting Lemma, $\phi$ may be written as a
product $\Phi_1$ in the alphabet $\PCT_1^{\pm 1}$ (starting with
$\Phi_0$, delete those letters not in $\PCT_{1}^{\pm 1}$). Now let $i$
be an integer such that $1 \leq i < N$ and suppose that $\phi$ may
be written as a product $\Phi_i$ in the alphabet $\bigl(\PCT_1\cap
\dots \cap \PCT_i \bigr)^{\pm 1}.$ By Remark
\ref{RemarkOnQ}(\ref{PropertyOfQ}), each element of $\PCT_1\cap
\dots \cap \PCT_i$ acts trivially on $v_{i+1}$. It follows that $w$
is in the centralizer of $v_{i+1}$.  Hence $w \in W(S_{i+1})$.  By the
Restricted Alphabet Rewriting Lemma, $\phi$ may be written as a
product $\Phi_{i+1}$ in the alphabet $\bigl(\PCT_1\cap \dots \cap
\PCT_i \cap \PCT_{i+1} \bigr)^{\pm 1}$ (starting with $\Phi_i$,
delete those letters not in $\PCT_{i+1}^{\pm 1}$).  By
induction we have that $\phi$ may be written as a product $\Phi_{N}$
in the alphabet $\PCT^{\pm 1}$.

Now  $\chi_{j K} \in \PCT$ if and only if $v_j$ is adjacent to each
vertex in $\Gamma$. But for such $v_j$, $\Gamma \setminus S_j =
\emptyset$ and $\PC$ (and hence $\PCMinus$)
contains no partial conjugations with operating letter
$v_j$.  Thus $\PCT = \emptyset$, $\Phi_{N}$ is the empty word and
$\phi$ is the trivial automorphism.
\end{proof}

\begin{proof}[Proof of Theorem \ref{CSemiDirect}]
This follows immediately from Lemma \ref{CInnerAndQ}, the fact that
$\Inner$ is a normal subgroup of $\AutZ$ and the fact that $\I \sqcup \PCMinus
$ generates $\AutZ$.
\end{proof}

\section{Some subgroups of $\AutZ$}\label{ConnectedSubSection}

In this section we prove Theorem \ref{QiEmbedsInAutZ}, which anticipates our study of $\OutZ$.

We first define the link points of a partial conjugation and some associated subsets of $\PC$.
\begin{defn}\label{LinkPointDefn}
For a partial conjugation $\chi_{j Q} \in \PC$ and a vertex $v_i$, we say that $v_i$ is a \emph{link point of} $\chi_{j Q}$ if
$v_j \in L_i$ and $Q \cap L_i \neq \emptyset$.  We write $$\PartitionSet_i := \{\chi_j Q \in \PC \; | \; v_i \hbox{ is a link point of } \chi_{jQ}\}.$$
\end{defn}
\begin{eg}
For example, $v_4$ is the unique link point of $\chi_{2 \{v_8, v_{15}, v_{16}\}}$ in the example examined in $\S$\ref{ExampleSection}.
\end{eg}
For an element $g \in W$, we write $\supp{g}$ for
the minimal subset of $V$ such that $g$ may be written as a word in the alphabet $(\supp{g})^{\pm 1}$ (cf. Lemma \ref{EquivalenceOfReducedWords}).

\begin{proof}[Proof of Theorem \ref{QiEmbedsInAutZ}]
Let $1 \leq i \leq N$, let $\Gamma'$ be the connected component of $\Gamma$ which contains $v_i$ and
let $\phi \in \langle \PartitionSet_i \rangle$ be such that $\phi$ acts as
the identity on $W(L_i)$.   It is immediate from the definitions that $\phi(v_i) = v_i$ and $\phi(v_k) = v_k$ for each
$v_k \in S_i \cup \Gamma \setminus \Gamma'$.  Fix
$j$ such that $v_j \in \Gamma' \setminus S_i$.  It suffices to show
that $\phi(v_j) = v_j$.

Define
\begin{eqnarray*}
H_i^j & := & \{v_\ell \in L_i \; | \; \hbox{each path from } L_i
\hbox{ to } v_j \hbox{ passes through } S_\ell\} \\
\mathcal{H}_i^j & := & \{\chi_{\ell K} \in  \PartitionSet_i \; | \; v_\ell \in
H_i^j\}.
\end{eqnarray*}
Observe that if $\chi_{\ell K} \in
\mathcal{H}_i^j$, then $v_j$ and $L_i \setminus S_\ell$ are in distinct connected components of $\Gamma \setminus S_\ell$; hence
$v_j \not \in K$ and $\chi_{\ell K}(v_j) = v_j$.
Thus $\psi(v_j) = v_j$ for each $\psi \in \langle \mathcal{H}_i^j \rangle$.



In this paragraph we prove that there exists
an element $w \in \langle H_i^j \rangle$ such that $\phi(v_j) = w v_j w^{-1}$.  Let $1 \leq m \leq N$ be such that
$v_m \neq v_j$ and $v_m \in \supp{\phi(v_j)}$. Since $\phi \in \langle \PartitionSet_i \rangle$, $v_m \in L_i$.
We must show that each path from $L_i$ to $v_j$ passes through $S_m$ (and hence $v_m \in H_i^j$). Let $v_{j_1}, v_{j_2}, \dots,
v_{j_\ell} \in \vertices$ be the successive vertices of a path from $v_{j_1} \in L_i$ to $v_{j_\ell} = v_j$.
Let $w_{j_1}, w_{j_2}, \dots, w_{j_\ell} \in W$ be minimal length words such
that $\phi(v_{j_k}) = w_{j_k} v_{j_k} w_{j_k}^{-1}$.  By hypothesis, $\phi$
acts as the identity on $W(L_i)$ and $w_{j_1} = 1$. Also by hypothesis,
$v_m \in \supp{w_{j_\ell}}$.  Let $k$ be minimal such
that $v_m \not \in \supp{w_{k-1}}$ and $v_m \in \supp{w_k}$.
Since $v_{j_{k-1}}$ and $v_{j_k}$ commute and $\phi$ is an
automorphism, we have that $w_{j_{k-1}} v_{j_{k-1}} w_{j_{k-1}}^{-1}$ and
$w_{j_k} v_{j_k} w_{j_k}^{-1}$ commute.  It follows that $w_{j_k}^{-1} w_{j_{k-1}} v_{j_{k-1}} w_{j_{k-1}}^{-1} w_{j_k}$ is in the centralizer of $v_{j_k}$,
which equals $W(S_{j_k})$ by Lemma \ref{CentralizerOfAVertex}.
The following facts are consequences of the Deletion Condition:
\begin{enumerate}
\item $v_m \in \supp{w_{j_k}^{-1} w_{j_{k-1}}}$;
\item if $v_m \not \in \supp{w_{j_k}^{-1} w_{j_{k-1}} v_{j_{k-1}} w_{j_{k-1}}^{-1} w_{j_k}}$, then $v_m \in S_{j_{k-1}}$;
\item if $v_m \in \supp{w_{j_k}^{-1} w_{j_{k-1}} v_{j_{k-1}} w_{j_{k-1}}^{-1} w_{j_k}}$, then $v_m \in S_{j_k}$.
\end{enumerate}
Hence $d(v_{j_{k-1}}, v_m) \leq 1$ or $d(v_{j_k}, v_m) \leq 1$ and the path $v_{j_1}, v_{j_2}, \dots, v_{j_\ell}$ passes
through $S_m$, as required

By the paragraph above and Lemma \ref{RetractionRewritingLemma}, there exists $\psi \in
\langle \mathcal{H}_i^j \rangle$ such that $\psi(v_j) =
\phi(v_j)$ (note that this equality need not hold for all $j$). Hence $\phi(v_j)= v_j$ as required.
\end{proof}

\section{The group $\OutZ$}\label{CardinalityOfOutZ}

In this section we prove Theorem \ref{OutZAbelianIffSIL} and Corollary \ref{OutWInfiniteIffSIL}.
We first investigate the ways in which the connected components of $\Gamma \setminus S_i$ and $\Gamma \setminus S_j$ may interact.

\begin{lem}\label{Distance2TNotInK}
Let $\chi_{i K}, \chi_{j Q} \in \PC$.  If
$d(v_i, v_j) \geq 2$ and $v_j \not \in K$, then $K \cap Q =
\emptyset$ or $K \subset Q$.
\end{lem}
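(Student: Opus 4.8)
The statement is a purely combinatorial fact about how connected components of $\Gamma\setminus S_i$ and $\Gamma\setminus S_j$ intersect when $d(v_i,v_j)\geq 2$. First I would record the basic observations forced by the hypotheses. Since $d(v_i,v_j)\geq 2$, the vertex $v_j$ lies in $\Gamma\setminus S_i$, so $v_j$ lies in some connected component $K'$ of $\Gamma\setminus S_i$; the hypothesis $v_j\not\in K$ says precisely $K'\neq K$. Likewise $v_i\in\Gamma\setminus S_j$ lies in some component of $\Gamma\setminus S_j$; I would check whether $v_i\in Q$ or not, but in fact the argument will hinge on the location of the whole component $K$ relative to $S_j$.

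The core step is the following dichotomy: the component $K$ is a connected subgraph of $\Gamma\setminus S_i$ that does not contain $v_j$, and I claim $K$ either misses $S_j$ entirely or else $v_i\in K$. Indeed, suppose some vertex $u\in K$ is adjacent to $v_j$, i.e.\ $u\in S_j\setminus\{v_j\}$ with $u\in K$; I must then derive a contradiction with the plan being to show this can't happen unless... — actually the cleaner route is: since $K$ is connected and disjoint from $S_i$, and $d(v_i,v_j)\ge 2$ means $v_i\notin S_j$, the component $K$ lies inside $\Gamma\setminus S_i$; within $\Gamma\setminus S_j$ the connected set $K$ (minus any vertices it shares with $S_j$) must lie in a single component of $\Gamma\setminus S_j$. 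So I would argue: $K\setminus S_j$ is connected (care needed if $K$ meets $S_j$ — here one uses that removing $S_j$ from the connected graph $K$ which avoids $v_j$ behaves well because $v_j\notin K$), hence $K\setminus S_j$ is contained in a single connected component of $\Gamma\setminus S_j$; this component is either $Q$ or disjoint from $Q$. In the first case I get $K\subseteq Q$ up to the vertices of $K\cap S_j$, and then I must rule out $K$ containing a vertex of $S_j$ other than by it being absorbed — this is where the hypothesis $d(v_i,v_j)\geq 2$ is used again: $v_i\notin S_j$, and any vertex of $K\cap S_j$ would be adjacent to $v_j$ but not in $S_i$, which is consistent, so I actually need to handle the $K\cap S_j\neq\emptyset$ case by showing it forces $K\subseteq Q$ anyway or forces $K\cap Q=\emptyset$. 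In the second case ($K\setminus S_j$ in a component disjoint from $Q$), since $Q$ itself is disjoint from $S_j$, we get $K\cap Q=\emptyset$.

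The subtle point — and the main obstacle — is the interaction of $K$ with $S_j$: a priori $K$ could straddle $S_j$, meeting both $Q$ and another component of $\Gamma\setminus S_j$, which would break the conclusion. The plan is to show this straddling is impossible using connectivity of $K$ together with $v_j\notin K$: if $x\in K\cap Q$ and $y\in K$ with $y$ in a different component of $\Gamma\setminus S_j$ (or $y\in S_j$), take a path in $K$ from $x$ to $y$; this path, lying in $\Gamma\setminus S_i$ and avoiding $v_j$, must cross $S_j\setminus\{v_j\}$, and I would examine the first vertex $z$ on this path lying in $S_j$. Since $z\in S_j\setminus\{v_j\}$, $z$ is adjacent to $v_j$; the segment of the path before $z$ stays in $Q$ (it avoids $S_j$ and is connected to $x\in Q$). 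Now the key is to show $z\in Q$ as well or to get a contradiction: here one invokes that $z\notin S_i$ (as $z\in K$) and $d(v_i,v_j)\ge2$. I expect the cleanest formulation is to prove directly that \emph{if $K\cap Q\neq\emptyset$ then $K\subseteq Q$}, by showing $K$ contains no vertex of $S_j$: a vertex $z\in K\cap S_j$ is adjacent to $v_j$; consider whether $v_i$ is adjacent to $z$ — if so, since $z\in S_j$, $z$ would be in $S_i\cap S_j$... but $z\in K$ forbids $z\in S_i$, contradiction; if $v_i$ is not adjacent to $z$ then $z\in\Gamma\setminus S_i$, fine, and one pushes on. This vertex-by-vertex bookkeeping, using repeatedly that membership in $K$ forbids membership in $S_i$, is the technical heart; everything else is routine graph-connectivity manipulation. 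I would organize it so that the case analysis is "$K\cap Q=\emptyset$" versus "$K\cap Q\neq\emptyset$, whence $K\subseteq Q$", with the latter proved by showing no path in $K$ can leave $Q$ without passing through $v_j\notin K$.
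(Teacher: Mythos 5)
There is a genuine gap, and it sits exactly at the point you yourself flag as ``the technical heart.'' Your set-up matches the paper's proof: assume $K\cap Q\neq\emptyset$ and $K\not\subset Q$, take $v_m\in K\cap Q$ and $v_k\in K\setminus Q$, join them by a path inside the connected set $K$, and observe that this path must contain a vertex $z$ adjacent to $v_j$ (it leaves $Q$, and it cannot pass through $v_j$ since $v_j\notin K$). But you never close the argument from there. The decisive observation is: since $d(v_i,v_j)\geq 2$, the vertex $v_j$ itself lies in $\Gamma\setminus S_i$; and $z\in K\subseteq\Gamma\setminus S_i$ is adjacent to $v_j$; so $z$ and $v_j$ lie in the same connected component of $\Gamma\setminus S_i$, which forces $v_j\in K$ --- contradicting the hypothesis $v_j\notin K$. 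This one line is the entire content of the lemma (it shows in fact that $K\cap S_j=\emptyset$, so the connected set $K$ lies in a single component of $\Gamma\setminus S_j$, which is either $Q$ or disjoint from $Q$). Your proposal gestures at the right ingredients (``one invokes that $z\notin S_i$ and $d(v_i,v_j)\geq 2$'') but never performs this deduction.

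Instead, you branch on whether $v_i$ is adjacent to $z$, and that case split leads nowhere: the first branch only re-derives $z\notin S_i$, which you already know from $z\in K$, and the second branch --- the only substantive one --- is explicitly left open (``fine, and one pushes on''). Similarly, the earlier claims in your plan are shaky as stated: ``$K$ either misses $S_j$ entirely or else $v_i\in K$'' cannot be right since $v_i\in S_i$ while $K\subseteq\Gamma\setminus S_i$, and the assertion that $K\setminus S_j$ is connected is not justified (it becomes vacuously fine only after one knows $K\cap S_j=\emptyset$, which again requires the missing step). So the skeleton of the argument is correct and is the same as the paper's, but the proof is not complete: supply the observation that an edge from $K$ to $v_j$ inside $\Gamma\setminus S_i$ would place $v_j$ in $K$, and everything else collapses to the two-line dichotomy you wanted.
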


\begin{proof}
Assume that $d(v_i, v_j) \geq 2$, $v_j \not \in K$ and $K \cap Q \neq \emptyset$.  Suppose that $K \not \subset Q$.  Let
$v_m \in K \cap Q$, let $v_k \in K \setminus Q$ and let $\alpha$ be a
path in $K$ from $v_m$ to $v_k$. Since $v_m \in Q$ but $v_k \not \in Q$, there
exists a vertex $v_a$ on $\alpha$ such that $d(v_j, v_a) = 1$.
Since $v_j, v_a \in \Gamma \setminus S_i$ and $d(v_j, v_a) = 1$, the vertices $v_a$ and $v_j$ are contained in the same connected
component of $\Gamma \setminus S_i$.  Hence
$v_j \in K$, contradicting the hypothesis.
\end{proof}

\begin{lem}\label{GeometryOfPCsLemma}
Let $\chi_{i K}, \chi_{j Q} \in \PC$. If $\Gamma$ is connected, then exactly
one of the following thirteen cases holds:
\begin{enumerate}
\item [(1)] $d(v_i, v_j) \leq 1$;
\item [(2)] $d(v_i, v_j) = 2$, $v_i \in Q$, $v_j \in K$, $K \cap Q = \emptyset$;
\item [(3)] $d(v_i, v_j) = 2$, $v_i \in Q$, $v_j \in K$, $K \cap Q \neq \emptyset$;
\item [(4)] $d(v_i, v_j) = 2$, $v_i \in Q$, $v_j \not \in K$, $K \cap Q = \emptyset$;
\item [(5)] $d(v_i, v_j) = 2$, $v_i \in Q$, $v_j \not \in K$, $K \subset Q$;
\item [(6)] $d(v_i, v_j) = 2$, $v_i \not \in Q$, $v_j \in K$, $K \cap Q = \emptyset$;
\item [(7)] $d(v_i, v_j) = 2$, $v_i \not \in Q$, $v_j \in K$, $K \supset Q$;
\item [(8)] $d(v_i, v_j) = 2$, $v_i \not \in Q$, $v_j \not \in K$, $K \cap Q = \emptyset$;
\item [(9)] $d(v_i, v_j) = 2$, $v_i \not \in Q$, $v_j \not \in K$, $K = Q$.
\item [(10)] $d(v_i, v_j) \geq 3$, $v_i \not \in Q$, $v_j \not \in K$, $K \cap Q = \emptyset$;
\item [(11)] $d(v_i, v_j) \geq 3$, $v_i \in Q$, $v_j \not \in K$, $K \subset Q$;
\item [(12)] $d(v_i, v_j) \geq 3$, $v_i \not \in Q$, $v_j \in K$, $K \supset Q$;
\item [(13)] $d(v_i, v_j) \geq 3$, $v_i \in Q$, $v_j \in K$, $K \cup Q = \Gamma$.
\end{enumerate}
The relation $\chi_{iK} \chi_{j Q} = \chi_{j Q} \chi_{iK}$ holds in
cases (1), (5), (7), (8), (10), (11) and (12). The relation
$\chi_{iK} \chi_{j Q} = \chi_{j Q} \chi_{iK}$ fails in cases (2), (3),
(4), (6), (9) and (13).
\end{lem}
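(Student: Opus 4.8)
The plan is to prove the dichotomy on cases in two stages: first establish the exhaustive list of thirteen cases, and then decide commutativity in each. For the first stage, I would organize the analysis by the distance $d(v_i,v_j)$ and by the membership relations $v_i \in Q$ (or not) and $v_j \in K$ (or not). The case $d(v_i,v_j)\le 1$ is case (1), so I may assume $d(v_i,v_j)\ge 2$. In that regime $v_i \notin S_j$ and $v_j \notin S_i$, so $v_i$ lies in some connected component of $\Gamma \setminus S_j$ and $v_j$ in some component of $\Gamma \setminus S_i$; the questions ``is $v_i \in Q$?'' and ``is $v_j \in K$?'' are then meaningful and split things into four gross subcases. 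Within each, the relationship between $K$ and $Q$ is controlled by Lemma~\ref{Distance2TNotInK} (applied with the roles of $(i,K)$ and $(j,Q)$ as needed): whenever $v_j \notin K$ we get $K \cap Q = \emptyset$ or $K \subset Q$, and symmetrically whenever $v_i \notin Q$ we get $K \cap Q = \emptyset$ or $Q \subset K$. The remaining work is to rule out the combinatorially impossible combinations and to see why the $d\ge 3$ cases collapse to only four possibilities: when $d(v_i,v_j)\ge 3$ one shows that $v_i \in Q$ forces $K \subseteq Q$ (any component of $\Gamma\setminus S_j$ other than the one containing $v_i$ cannot meet $K$ in the ``wrong'' way without creating a short path) and similarly $v_j \in K$ forces $Q \subseteq K$, and if both hold then $K \cup Q$ must be all of $\Gamma$ since $\Gamma$ is connected and $\Gamma \setminus (K \cup Q) \subseteq S_i \cap S_j$, which is forced to be empty by $d(v_i,v_j)\ge 3$. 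The main obstacle is this bookkeeping: being careful that the list is genuinely exhaustive (no legal configuration omitted) and genuinely exclusive (the conditions are pairwise incompatible), and in particular handling the boundary between $d=2$ and $d\ge 3$ correctly.

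For the second stage I would compute $\chi_{iK}\chi_{jQ}$ and $\chi_{jQ}\chi_{iK}$ directly on each vertex $v_k$. Writing out the effect of the composite: $\chi_{iK}\chi_{jQ}(v_k)$ differs from $\chi_{jQ}\chi_{iK}(v_k)$ only in how $v_i$ and $v_j$ (as conjugating letters) interact with the domains. The only vertices where a discrepancy can arise are $v_i$, $v_j$, and vertices lying in $K \cap Q$ or in the ``asymmetric'' parts $K\setminus Q$, $Q\setminus K$. The clean way to say it: the two composites agree iff (a) conjugating by $v_j$ does not move $v_i$ into or out of a position affected by the $\chi_{iK}$-conjugation and vice versa — i.e. roughly $v_j \notin K$ or $v_i \notin Q$ — and (b) the domains are ``nested or disjoint'' rather than ``crossing''. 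Going case by case: in (1) the letters commute in $W$ and $v_i \in S_j$, $v_j \in S_i$, so neither partial conjugation disturbs the other's operating letter, giving commutativity. In (5),(7),(11),(12) we have $K$ and $Q$ nested (one contained in the other) with the outer operating letter outside the inner domain, and one checks the nesting makes the two orders agree. In (8) and (10) the domains are disjoint and neither operating letter lies in the other's domain, so the conjugations act on disjoint sets of vertices and trivially commute. For the failures: in (2),(3),(4) we have $v_i \in Q$, so $\chi_{jQ}$ conjugates $v_i$ by $v_j$, and then the two orders produce visibly different words on some vertex of $K$ — one would exhibit an explicit $v_k$ (e.g. an element of $K$, using $v_j \in K$ in (2),(3) or a vertex of $Q \cap K$ or $K$ in (4)) on which the images differ as normal forms, invoking Lemma~\ref{EquivalenceOfReducedWords} to certify inequality in $W$. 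Case (6) is symmetric to (4) with the roles of $(i,K)$ and $(j,Q)$ swapped. In (9), $K = Q$ with $v_i,v_j$ both outside; here $\chi_{iK}$ and $\chi_{jK}$ move every vertex of $K$ by $v_i$ resp. $v_j$, and since $d(v_i,v_j)=2$ these do not commute as elements of $W$ acting on $K$, so the composites differ. Case (13) is the genuinely subtle failure: $K \cup Q = \Gamma$, $v_i \in Q$, $v_j \in K$, and one tests on, say, a vertex of $K \cap Q$ (nonempty since $K,Q$ cover a connected $\Gamma$ and are not disjoint) to see the orders disagree.

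The honest assessment: the first stage (exhaustiveness/exclusivity of the thirteen cases) is the part most likely to hide an error, because it is a finite but fiddly case split where an omitted or redundant branch is easy to miss; the key lever throughout is Lemma~\ref{Distance2TNotInK} together with connectedness of $\Gamma$ and the observation that $\Gamma \setminus (K\cup Q)$ is squeezed inside $S_i \cap S_j$. The second stage is more mechanical: for the commuting cases one checks agreement vertex-by-vertex using that the relevant operating letter lies in the relevant star or outside the relevant domain, and for the non-commuting cases one pins down a single witnessing vertex and appeals to the Normal Form Lemma (Lemma~\ref{EquivalenceOfReducedWords}) to conclude the two images are distinct elements of $W$. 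I would present the commuting cases together by extracting the two structural features ``$v_j \notin K$ or $v_i \notin Q$'' and ``$K,Q$ nested or disjoint,'' prove that their conjunction implies $\chi_{iK}\chi_{jQ} = \chi_{jQ}\chi_{iK}$ once and for all, verify that cases (1),(5),(7),(8),(10),(11),(12) all satisfy it, and then dispatch the six remaining cases with explicit witnesses.
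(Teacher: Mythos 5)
Your treatment of the exhaustiveness of the thirteen cases follows the paper's own route (Lemma~\ref{Distance2TNotInK} for $d\le 2$, then the four membership subcases plus connectivity for $d\ge 3$), and with the implications restated with their companion hypotheses it goes through; but be aware that, as written, ``$v_i\in Q$ forces $K\subseteq Q$'' is false without also assuming $v_j\notin K$ (on the path $u_1{-}u_2{-}\cdots{-}u_6$ with $v_i=u_2$, $v_j=u_5$ one has $v_i\in Q=\{u_1,u_2,u_3\}$, $v_j\in K=\{u_4,u_5,u_6\}$, and $K\not\subseteq Q$), and your justification of case (13), namely ``$\Gamma\setminus(K\cup Q)\subseteq S_i\cap S_j$'', is precisely the statement to be proved rather than an observation: it requires the path argument the paper gives (for $v_c\notin K$ a geodesic to $v_j$ meets $S_i$ in a vertex $v_a$ with $d(v_a,v_j)\ge 2$, so $v_i,v_a,v_c$ lie in one component of $\Gamma\setminus S_j$, i.e.\ $v_c\in Q$).

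The genuine error is in your proposed packaging of the commutation claims (which the paper leaves to the reader, so these checks are part of what you must supply correctly). The conjunction ``($v_j\notin K$ or $v_i\notin Q$) and ($K,Q$ nested or disjoint)'' does \emph{not} imply $\chi_{iK}\chi_{jQ}=\chi_{jQ}\chi_{iK}$: cases (4), (6) and (9) all satisfy it and are non-commuting. Concretely, in the star with center $c$ and leaves $l_1,l_2,l_3$, take $v_i=l_1$, $v_j=l_2$, $K=\{l_3\}$, $Q=\{l_1\}$ (case (4)): then $\chi_{iK}\chi_{jQ}(l_3)=l_1l_3l_1^{-1}$ while $\chi_{jQ}\chi_{iK}(l_3)=l_2l_1l_2^{-1}\,l_3\,l_2l_1^{-1}l_2^{-1}$, which differ by Lemma~\ref{EquivalenceOfReducedWords}; taking $Q=\{l_3\}$ instead gives case (9), again satisfying your criterion and again non-commuting. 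Moreover the claim that the seven commuting cases satisfy ``nested or disjoint'' is also false for case (1): in the $5$-cycle $a{-}b{-}k{-}x{-}m{-}a$ with $v_i=a$, $v_j=b$ one has $K=\{k,x\}$ and $Q=\{x,m\}$ crossing, so (1) must be handled (as you did earlier) via $v_iv_j=v_jv_i$, not via your unified lemma. What actually distinguishes the commuting from the non-commuting configurations is the interaction of membership with containment: e.g.\ in (5) and (11) the check for $x\in K$ uses $v_i\in Q$ and $K\subset Q$ together, and it is exactly the replacement of $K\subset Q$ by $K\cap Q=\emptyset$ that breaks it in (4). Finally, your witness for case (13) need not exist: in the path example above $K\cap Q=\emptyset$ while $K\cup Q=\Gamma$, so ``$K,Q$ are not disjoint'' is wrong; one should instead test a vertex of $Q$ (or of $K$), using $v_j\in K$ (resp.\ $v_i\in Q$), and conclude with the Normal Form Lemma.
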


\begin{proof}
It follows immediately from Lemma \ref{Distance2TNotInK} that the
cases (1)-(9) are an exhaustive list of the possibilities when
$d(v_i, v_j) \leq 2$. Thus we may assume that $d(v_i, v_j) \geq 3$.
\begin{description}
\item [Case $v_i \not \in Q$, $v_j \not \in K$] By Lemma \ref{Distance2TNotInK}, either $K \cap Q = \emptyset$ or $K = Q$.
Suppose that $K = Q$.  Let $v_k \in K$ (hence $v_k \in Q$) be such that
$d(v_i, v_k) = 2$ and let $v_{k'} \in \vertices$ be such that $d(v_i, v_{k'})
= d(v_{k'}, v_k) = 1$. By the triangle inequality, $d(v_j, v_{k'}) \geq 2$.
Since $d(v_k, v_{k'}) = 1$, $v_k$ and $v_{k'}$ are in the same connected
component of $\Gamma \setminus S_j$.  Thus $v_{k'} \in Q = K$, a
contradiction to the fact that $d(v_i, v_{k'}) = 1$.  Hence $K \cap Q =
\emptyset$.
\item [Case $v_i \in Q$ and $v_j \not \in K$]
Let $v_k \in K$ be such that $d(v_i, v_k) = 2$ and let $v_{k'}$ be such that $d(v_i, v_{k'}) = d(v_{k'}, v_k) = 1$.
By the triangle inequality, $d(v_j, v_{k'}) \geq 2$ and $d(v_j, v_k) \geq 1$.  Since $v_j \not \in K$, $d(v_j, v_k) > 1$.
Since $v_i, v_{k'}, v_k \in \Gamma \setminus S_i$ and $d(v_i, v_{k'}) = d(v_{k'}, v_k)=1$, the vertices $v_i, v_{k'}$ and $v_k$ are contained in the same connected component of
$\Gamma \setminus S_j$.  Thus $v_k \in Q$ and $K \cap Q \neq \emptyset$.  By Lemma \ref{Distance2TNotInK}, $K \subset Q$.
\item [Case $v_i \not \in Q$ and $v_j \in K$]
The proof is similar to the case $v_i \in Q$ and $v_j \not \in K$
above.
\item [Case $v_i \in Q$ and $v_j \in K$]
Let $v_c$ be a vertex in $\Gamma \setminus K$. Let $\alpha$ be a minimal length
path from $v_c$ to $v_j$. Since $v_j \in K$ and $v_c \not \in K$,
there exists a vertex $v_a$ on $\alpha$ such that $d(v_a, v_i) \leq 1$.  By the triangle inequality, $d(v_a, v_j) \geq
2$.  It follows that $d(v_c, v_j) \geq 2$ also.  Since $v_i, v_a, v_c \in \Gamma \setminus S_j$ and $d(v_i, v_a) \leq 1$ and the subpath of $\alpha$
from $v_a$ to $v_c$ lies in $\Gamma \setminus S_j$, the vertices $v_i$, $v_a$ and $v_c$ are contained in a single connected
component of $\Gamma \setminus S_j$. Hence $v_c \in Q$ and $Q \cup K =
\Gamma$.
\end{description}
We leave the reader to verify the statements about commuting
products.
\end{proof}

\begin{rem}\label{Case13ImpossibleRemark}
Assume that $\Gamma$ is connected and let $\chi_{i K}, \chi_{j Q} \in \PCMinus$.  It follows from the definition of $\PCMinus$ that $v_1 \not \in K \cup Q$ and Case (13) of Lemma \ref{GeometryOfPCsLemma}
is impossible.  Hence if $d(v_i, v_j) \neq 2$, then the relation $\chi_{iK} \chi_{j Q} = \chi_{j Q} \chi_{iK}$ holds.
\end{rem}

\begin{lem}\label{CCofIndividualAndCombined}
Let $1 \leq i < j \leq N$ be such that $d(v_i, v_j) \geq 2$ and let $R$ be a subgraph of $\Gamma$.  Then $R$ is a connected component
of $\Gamma \setminus S_i$ and $\Gamma \setminus S_j$ if and only if $R$ is a connected component of $\Gamma \setminus (L_i \cap L_j)$
and $v_i, v_j \not \in R$.
\end{lem}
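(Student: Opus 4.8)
The plan is to prove the two implications directly from the definitions, leaning on one elementary principle: if two vertices of a full subgraph $\Delta \subseteq \Gamma$ are adjacent in $\Gamma$, then they lie in the same connected component of $\Delta$. I will also use repeatedly that a vertex is never adjacent to itself, so $v_i \notin L_i$ and $v_j \notin L_j$, and hence both $v_i$ and $v_j$ lie in $\Gamma \setminus (L_i \cap L_j)$. No appeal to Lemma \ref{Distance2TNotInK} is needed; a short self-contained argument works.

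For the forward implication, suppose $R$ is at once a connected component of $\Gamma \setminus S_i$ and of $\Gamma \setminus S_j$. Since $R \subseteq \Gamma \setminus S_i$, every vertex of $R$ is at distance at least $2$ from $v_i$; in particular $v_i \notin R$ and $R \cap L_i = \emptyset$, and symmetrically $v_j \notin R$ and $R \cap L_j = \emptyset$. So $R$ is a connected subgraph of $\Gamma \setminus (L_i \cap L_j)$ containing neither $v_i$ nor $v_j$, and it remains only to verify maximality. If a vertex $u$ of $\Gamma \setminus (L_i \cap L_j)$ is adjacent to some $r \in R$, then $u \notin L_i$ or $u \notin L_j$; in the first case $u \neq v_i$ as well (as $r$ is not adjacent to $v_i$), so $u \in \Gamma \setminus S_i$ and therefore $u \in R$, and the second case is symmetric. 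Hence $R$ is a connected component of $\Gamma \setminus (L_i \cap L_j)$ with $v_i, v_j \notin R$.

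For the converse, suppose $R$ is a connected component of $\Gamma \setminus (L_i \cap L_j)$ with $v_i, v_j \notin R$. The first --- and only mildly delicate --- step is to show $R \subseteq \Gamma \setminus S_i$ and, symmetrically, $R \subseteq \Gamma \setminus S_j$: we already have $v_i \notin R$, and if some $r \in R$ were adjacent to $v_i$, then, since $r$ and $v_i$ both lie in $\Gamma \setminus (L_i \cap L_j)$, the edge joining them would place $v_i$ in the component $R$, a contradiction; so $R \cap S_i = \emptyset$. Given this, maximality is immediate: if $u \in \Gamma \setminus S_i$ is adjacent to some $r \in R$, then $u \notin L_i$, hence $u \notin L_i \cap L_j$, hence $u \in \Gamma \setminus (L_i \cap L_j)$, so $u \in R$. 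Thus $R$ is a connected component of $\Gamma \setminus S_i$, and symmetrically of $\Gamma \setminus S_j$. The only real obstacle is getting both inclusions $R \subseteq \Gamma \setminus S_i$, $R \subseteq \Gamma \setminus S_j$ in place before arguing maximality; beyond that the proof is bookkeeping, and the hypothesis $d(v_i, v_j) \geq 2$ is not actually needed for the argument --- it simply reflects the situation (a SIL, cf. Definition \ref{SILDefn}) in which the lemma will be used.
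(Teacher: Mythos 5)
Your proof is correct. Both your argument and the paper's come down to the same basic move --- verify maximality of $R$ by taking a vertex just outside $R$ that is adjacent to a vertex of $R$ and showing it must lie in the forbidden set --- but you organize it quite differently. The paper splits into the cases $d(v_i,v_j)\geq 3$ and $d(v_i,v_j)=2$: for $d\geq 3$ it notes $L_i\cap L_j=\emptyset$ and appeals to Lemma \ref{GeometryOfPCsLemma} to see that a common component of $\Gamma\setminus S_i$ and $\Gamma\setminus S_j$ is a whole component of $\Gamma$, and for $d=2$ it works inside the component $\Gamma'$ containing $v_i,v_j$ (treating components of $\Gamma$ missing $v_i,v_j$ separately) and uses the identity $S_i\cap S_j=L_i\cap L_j$, which is where the distance hypothesis enters. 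You instead give one uniform argument: in the forward direction you rule out the exceptional possibilities $u=v_i$ or $u=v_j$ directly from $R\cap S_i=R\cap S_j=\emptyset$ rather than via $S_i\cap S_j=L_i\cap L_j$, and in the converse you first establish $R\cap S_i=R\cap S_j=\emptyset$ from the fact that $v_i,v_j$ themselves lie in $\Gamma\setminus(L_i\cap L_j)$, exactly as the paper does but without the case split. What the paper's route buys is transparency of the $d\geq 3$ case and consistency with its earlier classification lemma; what yours buys is brevity, no reliance on Lemma \ref{GeometryOfPCsLemma} (whose statement assumes $\Gamma$ connected), and the observation --- which checks out --- that the hypothesis $d(v_i,v_j)\geq 2$ is not actually used, since it only serves to match the SIL setting where the lemma is applied.
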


\begin{proof}

If $d(v_i, v_j) \geq 3$, then $L_i \cap L_j = \emptyset$ and each connected component of $\Gamma \setminus (L_i \cap L_j)$ is a connected component
of $\Gamma$.  If $R$ is a connected component of $\Gamma \setminus S_i$ and $\Gamma \setminus S_j$, then $v_i, v_j \not \in R$ and it follows from
Lemma \ref{GeometryOfPCsLemma} that $R$ is a connected component of $\Gamma$. The result follows.

Now assume that $d(v_i, v_j) = 2$.  Let $\Gamma'$ denote the connected component of $\Gamma$ which contains $v_i$ and $v_j$.
If $R$ is not a subgraph of $\Gamma'$, then $R$ is a connected component
of $\Gamma \setminus S_i$ and $\Gamma \setminus S_j$ if and only if $R$ is a connected component of $\Gamma$
and $v_i, v_j \not \in R$.  The result follows. Assume that $R$ is a connected component of $\Gamma' \setminus S_i$ and $\Gamma' \setminus S_j$.  Clearly, $v_i, v_j \not \in R$.
Since $R$ is a connected subgraph of $\Gamma' \setminus S_i$ and $L_i \cap L_j \subset S_i$, $R$ is a connected subgraph of
$\Gamma' \setminus (L_i \cap L_j)$.  Suppose that $R$ is not a connected component of $\Gamma' \setminus (L_i \cap L_j)$.  Then there exist $v_x \in R$,
$v_y \in \Gamma' \setminus (R \cup (L_i \cap L_j))$ such that $d(v_x, v_y) = 1$.  Since $v_x \in R$ and $v_y \not \in R$ and $R$ is a connected component of
$\Gamma' \setminus S_i$, $v_y \in S_i$. Similarly, $v_y \in S_j$.  Thus $v_y \in S_i \cap S_j = L_i \cap L_j$---a contradiction.
Hence $R$ is a connected component of $\Gamma' \setminus (L_i \cap L_j)$.


Now assume that $d(v_i, v_j) = 2$ and $R$ is a connected component of $\Gamma' \setminus (L_i \cap L_j)$ and $v_i, v_j \not \in R$.  Since $v_i \not \in R$,
$S_i \cap R = \emptyset$ and $R$ is a connected subgraph of $\Gamma' \setminus S_i$.  Suppose that $R$ is not a connected component of $\Gamma' \setminus S_i$.
Then there exist $v_x \in R$, $v_y \in \Gamma' \setminus (R \cup S_i)$ such that $d(v_x, v_y) = 1$.  Since $v_x \in R$ and $v_y \not \in R$ and
$R$ is a connected component of $\Gamma' \setminus (L_i \cap L_j)$, $v_y \in L_i \cap L_j \subset S_i$---a contradiction.
Hence $R$ is a connected component of $\Gamma' \setminus S_i$. Similarly,
$R$ is a connected component of $\Gamma' \setminus S_j$.
\end{proof}

Recall the definition of an SIL (Definition \ref{SILDefn}).

\begin{lem}\label{KQLinkEverythingLemma}
Assume that $\Gamma$ does not contain a SIL.  Let $1 \leq i < j \leq N$ be such that $d(v_i, v_j) =2$, let $K_j$ be
the connected component of $\Gamma \setminus S_i$ which contains $v_j$ and let $Q_i$ be the connected component of
$\Gamma \setminus S_j$ which contains $v_i$.  Then
$\Gamma = K_j \cup Q_i \cup (L_i \cap L_j)$.
\end{lem}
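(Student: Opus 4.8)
The plan is to peel off the easy part of the claim by a direct set-theoretic computation, reduce the whole statement to a single assertion about the connected components of $\Gamma\setminus S_i$, and then invoke the no-SIL hypothesis (via Lemma \ref{CCofIndividualAndCombined}) to kill the one remaining configuration.

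First I would record the elementary facts forced by $d(v_i,v_j)=2$: we have $v_i\notin S_j$ and $v_j\notin S_i$, hence $S_i\cap S_j=L_i\cap L_j$ and $S_i\setminus S_j=\{v_i\}\cup(L_i\setminus L_j)$; moreover each vertex of $L_i\setminus S_j$ is adjacent to $v_i$ and lies in $\Gamma\setminus S_j$, so it lies in the same component of $\Gamma\setminus S_j$ as $v_i$, i.e.\ in $Q_i$. Thus $L_i\setminus S_j\subseteq Q_i$ and, symmetrically, $L_j\setminus S_i\subseteq K_j$; in particular $S_i\setminus S_j\subseteq Q_i$. Writing $\Gamma$ as the disjoint union of $S_i$, of $K_j$, and of the remaining connected components of $\Gamma\setminus S_i$, and using $L_i\cap L_j\subseteq S_i$ together with the inclusions just listed (which dispose of the vertices lying in $S_i$), it is routine to see that the identity $\Gamma=K_j\cup Q_i\cup(L_i\cap L_j)$ is equivalent to the single assertion: \emph{every connected component $D$ of $\Gamma\setminus S_i$ with $D\neq K_j$ satisfies $D\subseteq Q_i$}. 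This is the statement I would prove.

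So fix a connected component $D\neq K_j$ of $\Gamma\setminus S_i$. The first step is to show $D\cap S_j=\emptyset$: since $v_j\in K_j$ and distinct components are disjoint, $v_j\notin D$, so $D\cap S_j=D\cap L_j$; but $D\subseteq\Gamma\setminus S_i$ gives $D\cap L_j\subseteq L_j\setminus S_i\subseteq K_j$, which is disjoint from $D$. Hence $D$ is a connected subgraph of $\Gamma\setminus S_j$ and therefore lies inside a unique connected component $D'$ of $\Gamma\setminus S_j$. The second, and most delicate, step is to show that if $D'\neq Q_i$ then in fact $D=D'$: any vertex $v_y\in D'\setminus D$ adjacent to a vertex of $D$ cannot lie in $\Gamma\setminus S_i$ (otherwise it would lie in the same component of $\Gamma\setminus S_i$ as that vertex, namely $D$), so $v_y\in S_i$; but $v_y\in D'\subseteq\Gamma\setminus S_j$, whence $v_y\in S_i\setminus S_j\subseteq Q_i$, contradicting $D'\neq Q_i$. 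So no such $v_y$ exists and $D=D'$.

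To finish, suppose for contradiction that $D\not\subseteq Q_i$. Since $D\subseteq D'$ this forces $D'\neq Q_i$, so by the previous step $D=D'$ is simultaneously a connected component of $\Gamma\setminus S_i$ and of $\Gamma\setminus S_j$; being a component of $\Gamma\setminus S_i$ it omits $v_i$, and being a component of $\Gamma\setminus S_j$ it omits $v_j$. By Lemma \ref{CCofIndividualAndCombined}, $D$ is then a connected component of $\Gamma\setminus(L_i\cap L_j)$ with $v_i,v_j\notin D$, and since $d(v_i,v_j)=2\geq2$ this is a SIL (Definition \ref{SILDefn}), contrary to hypothesis. Hence $D\subseteq Q_i$; as $D$ was an arbitrary component of $\Gamma\setminus S_i$ other than $K_j$, this proves the lemma. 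I expect the main obstacle to be the second step above, i.e.\ ruling out the possibility that $D$ fills only part of a component of $\Gamma\setminus S_j$; the facts that drive both that step and the preliminary reduction are the two inclusions $L_i\setminus S_j\subseteq Q_i$ and $L_j\setminus S_i\subseteq K_j$ (equivalently $S_i\setminus S_j\subseteq Q_i$ and $S_j\setminus S_i\subseteq K_j$).
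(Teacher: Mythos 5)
Your proof is correct, but it takes a genuinely different route from the paper's. The paper argues directly at the level of $\Gamma \setminus (L_i \cap L_j)$: the no-SIL hypothesis immediately forces this graph to have at most two connected components (each component must contain $v_i$ or $v_j$); in the two-component case the components are identified with $K_j$ and $Q_i$, and in the connected case an arbitrary vertex $v_x$ is routed to $v_i$ by a minimal-length path in $\Gamma \setminus (L_i \cap L_j)$, with $v_x$ landing in $K_j$ or $Q_i$ according to whether that path meets $S_j$. You instead work component-by-component in $\Gamma \setminus S_i$: after reducing the lemma to the claim that every component $D \neq K_j$ lies in $Q_i$, you show $D$ is disjoint from $S_j$ (via $L_j \setminus S_i \subseteq K_j$), that $D$ either sits inside $Q_i$ or equals a full component of $\Gamma \setminus S_j$ (a clean boundary-vertex argument using $S_i \setminus S_j \subseteq Q_i$), and then convert the bad configuration into a SIL by quoting Lemma \ref{CCofIndividualAndCombined} -- a lemma the paper proves but does not actually invoke in its own proof of this statement. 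Your version is longer but makes every step explicit, isolates the single point where the no-SIL hypothesis enters, and avoids the minimal-path case analysis (which in the paper is left rather terse); the paper's version is more economical and exploits the SIL definition globally, at the cost of leaving the two-component identification and the path argument to the reader. Both are valid proofs.
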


\begin{proof}
Since $\Gamma$ does not contain a SIL, $\Gamma \setminus (L_i \cap L_j)$ has at most two connected components.
If $\Gamma \setminus (L_i \cap L_j)$ has two connected components, they are $K_j$ and $Q_i$ and the result is clear.  Assume that
$\Gamma \setminus (L_i \cap L_j)$ is connected.  Let $v_x \in \Gamma \setminus (L_i \cap L_j)$ and let $\alpha$ be a minimal length path in
$\Gamma \setminus (L_i \cap L_j)$ from $v_x$ to $v_i$.  If $\alpha$ passes through $S_j$, then $v_x \in K_j$.
If $\alpha$ does not pass through $S_j$, then $v_x \in Q_i$.  Hence the result.
\end{proof}

Combined with Remark \ref{WLOGAssumeConnected}, the following lemma allows us to assume that $\Gamma$ is connected
when proving Theorem \ref{OutZAbelianIffSIL} and Corollary \ref{OutWInfiniteIffSIL}.  The lemma is immediate.

\begin{lem}\label{SILPlusIFFSIL}
Let $\Gamma^+$ be as in Remark \ref{WLOGAssumeConnected}.  Then $\Gamma$ has a SIL if and only if $\Gamma^+$ has a SIL.
\end{lem}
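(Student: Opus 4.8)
The plan is to show that conditions (1) and (2) of Definition \ref{SILDefn} are witnessed by exactly the same pairs of vertices in $\Gamma$ as in $\Gamma^+$, so the biconditional is essentially a tautology once the new vertex $v_0$ is accounted for.

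First I would dispose of $v_0$. Since $\Gamma^+$ is obtained from $\Gamma$ by adding the vertex $v_0$ together with all edges $\{v_0, v_k\}$, $1 \leq k \leq N$, we have $d_{\Gamma^+}(v_0, v_k) = 1$ for every such $k$. Hence no pair involving $v_0$ can satisfy condition (1) of Definition \ref{SILDefn}, so any SIL of $\Gamma^+$ must be witnessed by a pair $v_i, v_j$ with $1 \leq i < j \leq N$; a SIL of $\Gamma$ is of course witnessed by such a pair as well. It therefore suffices to fix $1 \leq i < j \leq N$ and check that conditions (1) and (2) hold for $(v_i, v_j)$ in $\Gamma$ if and only if they hold for $(v_i, v_j)$ in $\Gamma^+$.

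For condition (1): since passing from $\Gamma$ to $\Gamma^+$ only adds edges incident to $v_0$, the vertices $v_i, v_j$ are adjacent in $\Gamma$ if and only if they are adjacent in $\Gamma^+$, and when they are not adjacent we have $d_{\Gamma^+}(v_i, v_j) = 2$ via the path $v_i v_0 v_j$; so $d_\Gamma(v_i, v_j) \geq 2$ iff $d_{\Gamma^+}(v_i, v_j) \geq 2$. For condition (2): the link of $v_k$ in $\Gamma^+$ is $L_k \cup \{v_0\}$ for each $1 \leq k \leq N$, so $L_i^+ \cap L_j^+ = (L_i \cap L_j) \cup \{v_0\}$; deleting this set from $\Gamma^+$ deletes $v_0$ and all edges incident to it, leaving precisely the full subgraph of $\Gamma$ on $\vertices \setminus (L_i \cap L_j)$. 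Thus $\Gamma^+ \setminus (L_i^+ \cap L_j^+)$ and $\Gamma \setminus (L_i \cap L_j)$ are the same graph, hence have the same connected components, and condition (2) (``there is a component not containing $v_i$ or $v_j$'') transfers verbatim. Combining the two items finishes the argument.

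There is no genuine obstacle here; the only point needing a moment's care is confirming that a SIL of $\Gamma^+$ cannot secretly involve $v_0$, which is exactly the distance-one observation in the second step. Everything else is bookkeeping about links and full subgraphs.
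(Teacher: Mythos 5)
Your proof is correct, and it is exactly the routine verification the paper has in mind when it declares the lemma ``immediate'': pairs involving $v_0$ are excluded since $v_0$ is adjacent to everything, and for $1 \leq i < j \leq N$ both the distance condition and the graph $\Gamma \setminus (L_i \cap L_j)$ (hence its components) are unchanged in passing to $\Gamma^+$. Nothing is missing; the paper simply omits this bookkeeping.
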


We now prove the two main results of the section.

\begin{proof}[Proof of Theorem \ref{OutZAbelianIffSIL}]
Assume that $W$ is a graph product of directly-inde-composable
cyclic groups.  By Remark \ref{WLOGAssumeConnected} and Lemma \ref{SILPlusIFFSIL}, we may assume without loss that $\Gamma$ is connected.

Suppose $\Gamma$ contains a SIL with $i$, $j$ and $R$ as in
Definition \ref{SILDefn}.  Let $K$ denote the connected component of $\Gamma \setminus S_i$ which contains $v_j$ and let $Q$ denote the
the connected component of $\Gamma \setminus S_j$ which contains $v_i$.  By Lemma \ref{CCofIndividualAndCombined},
$\chi_{iR}, \chi_{jR} \in \PC$.  If $R$ does not contain the least element of $\Gamma \setminus L_i \cap L_j$, then
$\chi_{iR}, \chi_{jR} \in \PCMinus$.  If $R$ does contain the least element of $\Gamma \setminus L_i \cap L_j$, then
$\chi_{iK}, \chi_{jQ} \in \PCMinus$.  Calculation confirms that
$\chi_{iR} \chi_{jR} \neq \chi_{jR} \chi_{iR}$ and $\chi_{iK} \chi_{jQ} \neq \chi_{jQ} \chi_{iK}$.  Hence $\OutZ$ is not abelian and Property (1) implies Property (2).

Now suppose that $\Gamma$ does not contain a SIL and let $\chi_{i K},
\chi_{j Q} \in \PCMinus$.  By Remark \ref{Case13ImpossibleRemark}, the relation $\chi_{iK} \chi_{jQ} = \chi_{jQ} \chi_{iK}$ holds whenever
$d(v_i, v_j) \neq 2$.  Assume that $d(v_i, v_j) = 2$.
By Lemma \ref{KQLinkEverythingLemma}, $\Gamma = K_j \cup Q_i \cup (L_i \cap L_j)$ for $K_j$ and $Q_i$ as in the statement of the Lemma.  Without loss, assume that the least element
of $\Gamma \setminus (L_i \cap L_j)$ is contained in $K_j$.  By the definition of $\PCMinus$, $K \neq K_j$.  Thus $v_j \not \in K$ and $K \subset Q_i$.  If $Q=Q_i$, then
$K \subset Q$ and case (5) of Lemma \ref{GeometryOfPCsLemma} holds.  If $Q \not = Q_i$, then $v_i \not \in Q$ and $K \cap Q =\emptyset$ and case (8)
of Lemma \ref{GeometryOfPCsLemma} holds.  In either case, the relation $\chi_{iK} \chi_{jQ} = \chi_{jQ} \chi_{iK}$ holds.
Thus $\OutZ$ is an abelian group and Property (2) implies Property (1).
\end{proof}

\begin{proof}[Proof of Corollary \ref{OutWInfiniteIffSIL}]
Assume that $W$ is a graph product of primary
cyclic groups.  By Remark \ref{WLOGAssumeConnected} and Lemma \ref{SILPlusIFFSIL}, we may assume without loss that $\Gamma$ is connected.

Since each partial conjugation has finite order, it is clear that Property (1) implies Property (3).
Suppose $\Gamma$ contains a SIL with $i$, $j$ and $R$ as in
Definition \ref{SILDefn}.  Let $v_r$ be a vertex in $R$.  Calculation confirms that
$(\chi_{iR} \chi_{jR})^n(v_r) = (v_j v_i)^n v_r (v_j v_i)^{-n}$ and $(\chi_{iR} \chi_{jR})^n(v_i) = v_i$ for each positive integer $n$.
It follows that no power of $\chi_{iR} \chi_{jR}$ is an inner automorphism.  Hence $\OutW$, and $\OutZ$, have infinite order and Property (3) implies Property (2).
\end{proof}

\section{The group $\OutZ$ in the case that $\Gamma$ is a tree}\label{TreeSection}

Theorems \ref{TitsSplittingRecovered} and \ref{CSemiDirect} reduce the study of $\CAutW$ to the study of $\OutZ$.  In this section we describe the
structure of $\OutZ$ in the special case that $\Gamma$ is a tree.  The reader may wish to switch back and forth between
this section and $\S$\ref{ExampleSection}, in which we follow a specific example.

Throughout, we assume that $\Gamma$ is a tree with at
least three vertices.  In particular, each $W(L_i)$ is a free product of cyclic groups.  By reindexing, if necessary, we may further assume that
indices have been assigned to elements of $\vertices$ so that
$v_1$ is a leaf (that is, adjacent to exactly one vertex) and
if $d(v_1, v_i) < d(v_1, v_j)$, then $i < j$.

Because $\Gamma$ is a tree, each partial conjugation has a unique link point and we may define a natural
partition of $\PCMinus$ as follows:
for each $i = 1, 2, \dots, N$, define
$$\PartitionSet^0_i :=  \{\chi_{j Q} \in \PCMinus \; | \; v_i \hbox{ is the link point of } \chi_{j Q}\}.$$
Observe the following properties:
\begin{enumerate}
\item $\PartitionSet^0_i = \PartitionSet_i \cap \PCMinus$;
\item $\PCMinus  = \PartitionSet^0_N \sqcup \PartitionSet^0_{N-1} \sqcup \dots \sqcup \PartitionSet^0_1$;
\item if $v_i$ is a leaf, then $\PartitionSet^0_i = \emptyset$.
\end{enumerate}

\begin{lem}\label{LisCommuteInTree}
If $\chi_{iK}, \chi_{j Q} \in \PCMinus$ are in distinct elements of the partition $\PartitionSet^0_N \sqcup \PartitionSet^0_{N-1} \sqcup \dots \sqcup \PartitionSet^0_1$,
then $\chi_{iK}$ and $\chi_{j Q}$ commute.
\end{lem}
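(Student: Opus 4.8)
The statement is that two partial conjugations in $\PCMinus$ with distinct link points commute. Since $\Gamma$ is a tree, the case analysis of Lemma \ref{GeometryOfPCsLemma} is available, and by Remark \ref{Case13ImpossibleRemark} (which applies because $\chi_{iK}, \chi_{jQ} \in \PCMinus$) the only potentially non-commuting situation is $d(v_i, v_j) = 2$, i.e.\ cases (2)--(9) of Lemma \ref{GeometryOfPCsLemma}. So the plan is: assume $d(v_i, v_j) = 2$ and $\chi_{iK}$ and $\chi_{jQ}$ do not commute, and derive that they must have a common link point, contradicting the hypothesis.

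First I would pin down the geometry of the distance-$2$ configuration in a tree. If $d(v_i, v_j) = 2$, there is a unique vertex $v_c$ with $d(v_i, v_c) = d(v_c, v_j) = 1$, and $v_c = L_i \cap L_j$ (a single vertex, since $\Gamma$ is a tree). Removing $S_i$ from the tree splits it into components, one of which contains $v_j$; call it $K_j$. Similarly $Q_i$ is the component of $\Gamma \setminus S_j$ containing $v_i$. I would next show, using that $\Gamma$ is a tree (so every two vertices are joined by a unique path and removing a star disconnects exactly along the edges at that star), that in each of the non-commuting cases (2), (3), (4), (6), (9) the two domains $K$ and $Q$ must satisfy $v_c \in K$ and $v_c \in Q$; equivalently, the relevant components both contain the middle vertex $v_c$. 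The key local fact is: if $v_j \in K$ (cases 2,3,6) then the path from $v_j$ back toward $v_i$ enters $K$, and its first vertex outside $S_i$ is $v_j$ itself only if $v_c \notin K$ would force $v_j$'s component to be separated from $v_c$ — but $v_c \in S_i$, so the component $K$ containing $v_j$ is exactly the set of vertices reachable from $v_j$ in $\Gamma \setminus S_i$, and $v_c \notin K$. I need to be careful here: I would instead argue directly that non-commuting forces the \emph{link point} of $\chi_{iK}$ to be $v_c$. Recall $v_c$ is a link point of $\chi_{iK}$ iff $v_i \in L_c$ (true, since $d(v_i,v_c)=1$) and $K \cap L_c \neq \emptyset$. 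Since $L_c$ consists of $v_i$, $v_j$, and the other neighbors of $v_c$, and since (one checks from Lemma \ref{GeometryOfPCsLemma}'s non-commuting cases) $K$ must meet $\{v_j\}\cup(L_c\setminus\{v_i\})$, we get that $v_c$ is a link point of $\chi_{iK}$; symmetrically $v_c$ is a link point of $\chi_{jQ}$. As each has a unique link point (tree hypothesis), both link points equal $v_c$, so they lie in the same $\PartitionSet^0_c$, contradicting that they are in distinct parts.

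The cleanest route, and the one I would actually write, is to prove the contrapositive form of the commuting criterion specialized to trees: if $\chi_{iK}$ and $\chi_{jQ}$ do not commute, then they share a link point. Given Remark \ref{Case13ImpossibleRemark}, reduce to $d(v_i,v_j)=2$; let $v_c$ be the midpoint. In the non-commuting cases of Lemma \ref{GeometryOfPCsLemma}, $K$ and $Q$ overlap or "cross" in a way that forces each domain to contain a neighbor of $v_c$ other than the operating letter of the other conjugation — and in a tree this is exactly the condition that $v_c$ is a link point. I would verify this claim case-by-case but only sketch it, since each check is a short path-in-a-tree argument: e.g.\ in case (3), $v_j \in K$ and $K\cap Q \neq\emptyset$, and since $v_j$'s only neighbor on the $v_i$ side is $v_c$, the component $K$ of $\Gamma\setminus S_i$ containing $v_j$ must contain $v_j$ and hence $v_c \in L_i$ witnesses $v_c$ as a link point of $\chi_{iK}$ because $v_j \in K \cap L_c$; symmetrically for $\chi_{jQ}$ via $v_i \in Q \cap L_c$ since $v_i \in Q$. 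The main obstacle is organizing the case distinctions of Lemma \ref{GeometryOfPCsLemma} efficiently and confirming in each non-commuting case that \emph{both} domains meet the appropriate link — but since $\Gamma$ is a tree and $d(v_i,v_j)=2$ forces a unique middle vertex $v_c$, every path between the two sides is pinned through $v_c$, which makes each verification essentially immediate. Once every non-commuting case yields $v_c$ as a common link point, and uniqueness of link points in a tree collapses this to membership in a common $\PartitionSet^0_c$, the lemma follows.
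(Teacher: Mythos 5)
Your proposal is correct and follows essentially the same route as the paper: reduce to the distance-two non-commuting cases of Lemma \ref{GeometryOfPCsLemma} via Remark \ref{Case13ImpossibleRemark}, then use tree geometry to show that in each such case the midpoint $v_c$ of the path from $v_i$ to $v_j$ is a shared (and, in a tree, unique) link point, contradicting membership in distinct parts of the partition. The only cosmetic difference is that the paper observes case (3) cannot occur in a tree, whereas you absorb it into the common-link-point argument, which is equally valid.
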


\begin{proof}
Let $\chi_{iK}, \chi_{j Q} \in \PCMinus$ be elements which do not commute.
By Remark \ref{Case13ImpossibleRemark}, one of cases (2), (3), (4), (6) or (9) in Lemma \ref{GeometryOfPCsLemma}
must hold.  We leave the reader to verify that the definition of $\PCMinus$ and
the simple geometry of a tree imply that case (3) is impossible, and cases (2), (4), (6) and (9) may only hold if $\chi_{iK}, \chi_{j Q}$
have a common link point.  Thus the result.
\end{proof}

\begin{cor}\label{DirectProductStructureForTree}
$\OutZ = \langle \PartitionSet^0_N \rangle \times \langle \PartitionSet^0_{N-1} \rangle \times \dots \times \langle \PartitionSet^0_1 \rangle.$
\end{cor}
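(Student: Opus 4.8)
The plan is to realize $\OutZ$ as the image of the evident surjection from the direct product and then check that this surjection is injective. By Lemma \ref{LisCommuteInTree} the subgroups $\langle\PartitionSet^0_N\rangle,\dots,\langle\PartitionSet^0_1\rangle$ commute elementwise, so each is normal in $\OutZ$; and since $\PCMinus=\PartitionSet^0_N\sqcup\dots\sqcup\PartitionSet^0_1$ generates $\OutZ$, the map $(\phi_N,\dots,\phi_1)\mapsto\phi_N\cdots\phi_1$ is a well-defined surjective homomorphism $\langle\PartitionSet^0_N\rangle\times\dots\times\langle\PartitionSet^0_1\rangle\twoheadrightarrow\OutZ$. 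By the standard internal-direct-product criterion for a product of pairwise-commuting subgroups, it suffices to prove that for each $1\le m\le N$ one has $\langle\PartitionSet^0_m\rangle\cap\langle\PCMinus\setminus\PartitionSet^0_m\rangle=\{id\}$, using that $\PCMinus\setminus\PartitionSet^0_m=\bigsqcup_{j\neq m}\PartitionSet^0_j$.

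So fix $m$ and let $\phi$ lie in this intersection; we show $\phi=id$ (we may assume $v_m$ is not a leaf, else $\PartitionSet^0_m=\emptyset$ and there is nothing to prove). A partial conjugation $\chi_{jQ}$ has $v_m$ as a link point only if $v_j\in L_m$, so every generator of $\langle\PartitionSet^0_m\rangle$ has operating letter in $L_m$; hence, because $\phi\in\langle\PartitionSet^0_m\rangle$, there exist $w_1,\dots,w_N\in W(L_m)$ with $\phi(v_i)=w_iv_iw_i^{-1}$ for each $i$. Now invoke $\phi\in\langle\PCMinus\setminus\PartitionSet^0_m\rangle$: express $\phi$ as a word in $(\PCMinus\setminus\PartitionSet^0_m)^{\pm1}$ and apply The Restricted Alphabet Rewriting Lemma (Corollary \ref{RestrictedAlphabetRewriting}) with $\Omega=L_m$; deleting the letters whose operating vertex lies outside $L_m$ yields a word, still spelling $\phi$, over the alphabet $\{\chi_{jQ}\in\PCMinus\setminus\PartitionSet^0_m\;:\;v_j\in L_m\}^{\pm1}$. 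Each such letter fixes $W(L_m)$ pointwise: since $\PartitionSet^0_m=\PartitionSet_m\cap\PCMinus$, the hypothesis $\chi_{jQ}\in\PCMinus\setminus\PartitionSet^0_m$ forces $\chi_{jQ}\notin\PartitionSet_m$, i.e.\ $v_m$ is not a link point of $\chi_{jQ}$; as $v_j\in L_m$, this says precisely that $Q\cap L_m=\emptyset$, so $\chi_{jQ}$ fixes every vertex of $L_m$. Consequently $\phi$, a product of such automorphisms, fixes $W(L_m)$ pointwise.

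Finally, $\langle\PartitionSet^0_m\rangle\subseteq\langle\PartitionSet_m\rangle$ and $\langle\PartitionSet_m\rangle$ preserves $W(L_m)$ (its generators have operating letter in $L_m$), so $\Restriction_m(\phi)$ is defined and equals the identity of $W(L_m)$; Theorem \ref{QiEmbedsInAutZ} says $\Restriction_m$ is injective, so $\phi=id$. The delicate point is the coordination in the middle paragraph: it is membership of $\phi$ in the left-hand factor that places all conjugators $w_i$ in $W(L_m)$ and so licenses the rewriting lemma with $\Omega=L_m$, while membership in the right-hand factor is what forces the rewritten word to act trivially on $W(L_m)$. Granting this, the embedding theorem and routine direct-product bookkeeping complete the proof.
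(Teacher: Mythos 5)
Your proof is correct. The paper itself states Corollary \ref{DirectProductStructureForTree} with no argument beyond Lemma \ref{LisCommuteInTree}, i.e.\ it treats the decomposition as immediate from the fact that generators in distinct parts of the partition commute; strictly speaking, elementwise commuting of the factors together with generation only exhibits $\OutZ$ as a commuting (central) product, and the internal direct product requires in addition that each $\langle \PartitionSet^0_m \rangle$ meet $\langle \PCMinus \setminus \PartitionSet^0_m \rangle$ trivially. Your middle two paragraphs supply exactly that missing verification, and they do so with the paper's own machinery: membership in $\langle \PartitionSet^0_m \rangle$ places all conjugators in $W(L_m)$ (since every element of $\PartitionSet^0_m$ has operating letter in $L_m$), which licenses Corollary \ref{RestrictedAlphabetRewriting} with $\Omega = L_m$ applied to a word over $(\PCMinus \setminus \PartitionSet^0_m)^{\pm 1}$; the surviving letters have operating vertex in $L_m$ but are not in $\PartitionSet_m$, hence have domain disjoint from $L_m$ and fix $W(L_m)$ pointwise; injectivity of $\Restriction_m$ (Theorem \ref{QiEmbedsInAutZ}) then forces $\phi = id$. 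So your route is the same in skeleton (partition plus Lemma \ref{LisCommuteInTree}) but is genuinely more complete than what the paper records: it buys a rigorous upgrade from ``commuting product'' to ``direct product,'' which is what Theorem \ref{NiceStructureOfOutZForTree} actually needs, at the modest cost of re-invoking the rewriting lemma and the embedding theorem.
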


The following proposition completes the proof of Theorem \ref{NiceStructureOfOutZForTree}.  In the statements below, we write
$\Integer_{\ordermap(j)}$ for the cyclic group of order $m(j)$.

\begin{prop}\label{QiInCaseGammaATree}
Suppose that $\Gamma$ is a tree with at least three vertices.  Let $1 \leq i \leq N$ and
let $L_i = \{v_{k_1}, v_{k_2}, \dots, v_{k_M}\}$ with $k_1 < k_2 < \dots < k_M$.
If $M = 1$ (that is, $v_i$ is a leaf) or $M > 1$ and $v_{k_2}$ is the minimal element of $\Gamma \setminus S_{k_1}$, then
$$\langle \PartitionSet^0_i \rangle \cong \OutZ(L_i);$$
otherwise,
$$\langle \PartitionSet^0_i \rangle \cong \Integer_{\ordermap(k_1)} \times \OutZ(L_i).$$
\end{prop}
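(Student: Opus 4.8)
The plan is to transfer everything to the link graph $L_i$. Since $\Gamma$ is a tree it contains no triangles, so $L_i$ is a discrete graph; hence $W(L_i)$ is the free product $\Integer_{\ordermap(k_1)} \ast \cdots \ast \Integer_{\ordermap(k_M)}$, and when $M \geq 2$ this is a free product of at least two non-trivial groups, so it has trivial center and the inner automorphism $\iota_{k_1}\colon w \mapsto v_{k_1} w v_{k_1}^{-1}$ of $W(L_i)$ has order $\ordermap(k_1)$. Thinking of $\Gamma$ as rooted at the leaf $v_1$, the indexing convention forces $v_{k_1}$ to be the neighbour of $v_i$ on the path towards $v_1$, with $v_{k_2},\dots,v_{k_M}$ the neighbours of $v_i$ pointing away from $v_1$; in particular $v_{k_1}$ has the smallest index in $L_i$.

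First I would upgrade Theorem~\ref{QiEmbedsInAutZ} to an isomorphism: the restriction map $\Restriction_i$ carries $\langle\PartitionSet_i\rangle$ isomorphically onto $\AutZ(L_i)$. Injectivity is Theorem~\ref{QiEmbedsInAutZ}. For surjectivity, note that if $\chi_{jQ}\in\PartitionSet_i$ then $v_j\in L_i$ and, by the tree structure, $Q\cap L_i$ is a single vertex $v_m$, so $\Restriction_i(\chi_{jQ})$ is the partial conjugation of $W(L_i)$ carrying $v_m$ to $v_j v_m v_j^{-1}$ and fixing the remaining generators; conversely every partial conjugation of $W(L_i)$ is obtained in this way, and these partial conjugations generate $\AutZ(L_i)$. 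Therefore $\langle\PartitionSet^0_i\rangle\cong\langle\Restriction_i(\PartitionSet^0_i)\rangle\leq\AutZ(L_i)$, and it remains only to identify the subset $\Restriction_i(\PartitionSet^0_i)$ of the partial conjugations of $W(L_i)$ and the subgroup it generates.

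The crux is to decide, for $\chi_{jQ}\in\PartitionSet_i$, whether $\chi_{jQ}\in\PCMinus$, i.e.\ whether the least-index vertex of $\Gamma\setminus S_j$ lies outside $Q$. When $j=k_\ell$ with $\ell\geq 2$, that vertex is $v_1$, which lies in the component of $\Gamma\setminus S_{k_\ell}$ containing $v_{k_1}$ but in no component containing another neighbour $v_{k_{\ell'}}$ of $v_i$; so the surviving restrictions with operating letter $v_{k_\ell}$ are exactly those sending some $v_{k_{\ell'}}$ ($\ell'\geq 2$, $\ell'\neq\ell$) to $v_{k_\ell}v_{k_{\ell'}}v_{k_\ell}^{-1}$. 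When $j=k_1$, the least-index vertex of $\Gamma\setminus S_{k_1}$ is $v_{k_2}$ exactly in the first alternative of the statement; in the remaining case it has index smaller than $k_2$, which forces it not to be a descendant of $v_i$, hence to lie outside every component of $\Gamma\setminus S_{k_1}$ that meets $L_i\setminus\{v_{k_1}\}$, so all $M-1$ restrictions with operating letter $v_{k_1}$ survive. Matching this against the definition of $\PCMinus(L_i)$ for the induced ordering of $L_i$, I obtain $\Restriction_i(\PartitionSet^0_i)=\PCMinus(L_i)$ in the first alternative, and $\Restriction_i(\PartitionSet^0_i)=\PCMinus(L_i)\sqcup\{\chi\}$ in the second, where $\chi$ is the partial conjugation $v_{k_2}\mapsto v_{k_1}v_{k_2}v_{k_1}^{-1}$. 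I expect this bookkeeping---keeping track of which connected component of $\Gamma\setminus S_j$ a prescribed domain is, as the position of the defining least-index vertex varies, and in particular handling the degenerate subcase $v_{k_1}=v_1$ (which occurs only when $i=2$ and always falls into the first alternative)---to be the main obstacle.

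It then remains to read off the two conclusions. In the first alternative---which also covers $M=1$, where both sides are trivial---we get $\langle\PartitionSet^0_i\rangle\cong\langle\PCMinus(L_i)\rangle=\OutZ(L_i)$ by definition. In the second alternative, every element of $\PCMinus(L_i)$ fixes $v_{k_1}$ (as $k_1$ is least in $L_i$), so $\OutZ(L_i)$ centralises $\iota_{k_1}$; moreover $\iota_{k_1}$ is the product of $\chi$ with the $M-2$ partial conjugations $v_{k_\ell}\mapsto v_{k_1}v_{k_\ell}v_{k_1}^{-1}$ ($\ell\geq 3$), each of which lies in $\PCMinus(L_i)$, so $\langle\Restriction_i(\PartitionSet^0_i)\rangle=\langle\OutZ(L_i),\chi\rangle=\langle\OutZ(L_i),\iota_{k_1}\rangle$. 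Since $\iota_{k_1}$ has order $\ordermap(k_1)$ while $\InnW(L_i)\cap\OutZ(L_i)=\{id\}$ by Theorem~\ref{CSemiDirect} applied to $W(L_i)$, this group is the internal direct product $\langle\iota_{k_1}\rangle\times\OutZ(L_i)\cong\Integer_{\ordermap(k_1)}\times\OutZ(L_i)$, as required.
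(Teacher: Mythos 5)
Your proposal is correct and follows essentially the same route as the paper: injectivity of the restriction $\Restriction_i$ via Theorem \ref{QiEmbedsInAutZ}, an explicit identification of $\Restriction_i(\PartitionSet^0_i)$ inside the partial conjugations of $W(L_i)$ split according to whether the minimal vertex of $\Gamma\setminus S_{k_1}$ is $v_{k_2}$, and in the second case trading the extra generator $\chi_{k_1\{k_2\}}$ for the inner automorphism $\iota_{k_1}$ to obtain $\Integer_{\ordermap(k_1)}\times\OutZ(L_i)$. You simply carry out in detail the bookkeeping and the trivial-intersection check (via $\InnW(L_i)\cap\OutZ(L_i)=\{id\}$) that the paper states more tersely.
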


\begin{proof}
If $M = 1$, then $\PartitionSet^0_i = \emptyset$, $\OutZ(L_i)$ is trivial and the result holds.
So we may assume that $M > 1$.
Let $\rho_i: \langle \PartitionSet^0_i \rangle \to \AutW(L_i)$ denote the homomorphism determined by restriction, that is,
$\chi_{iK} \mapsto \chi_{iK}|_{L_i}.$  Theorem \ref{QiEmbedsInAutZ} gives that $\rho_i$ is injective.  We must show that the image
$\rho_i(\langle \PartitionSet^0_i \rangle)$ is as described in the
conclusion of the Proposition.

Assume that the minimal element of $\Gamma \setminus S_{k_1}$ is $v_{k_2}$.  Using the notation
$\chi_{k_j \{k_\ell\}}:= \chi_{k_j \{v_{k_\ell}\}}$, the image $\rho_i(\PartitionSet^0_i)$ is as follows:
$$\rho_i(\PartitionSet^0_i) = \{\chi_{k_j \{k_\ell\}} \; | \; 1 \leq j, \ell \leq M, j \neq \ell\} \setminus \{\chi_{k_1 \{k_2\}}, \chi_{k_2 \{k_1\}}, \dots, \chi_{k_M \{k_1\}} \}.$$
This is a generating set for $\OutZ(L_i)$.

Now assume that $v_{k_2}$ is not the minimal element of $\Gamma \setminus S_{k_1}$ (so the minimal element of $\Gamma \setminus S_{k_1}$ is not contained in $L_i$). The image $\rho_i(\PartitionSet^0_i)$ is as follows:
$$\rho_i(\PartitionSet^0_i) = \{\chi_{k_j \{k_\ell\}} \; | \; 1 \leq j, \ell \leq M, j \neq \ell\} \setminus \{\chi_{k_2 \{k_1\}}, \dots, \chi_{k_M \{k_1\}} \}.$$
If we replace $\chi_{k_1 \{k_2\}}$ by the product $\chi_{k_1 \{k_2\}} \dots \chi_{k_1 \{k_M\}}$, then the resulting set still generates
$\langle \rho_i(\PartitionSet^0_i) \rangle$.  Observe that $\chi_{k_1 \{k_2\}} \dots \chi_{k_1 \{k_M\}}$ commutes with each element in the set
$\rho_i(\PartitionSet^0_i) \setminus \{\chi_{k_1 \{k_2\}}\}$
and $\rho_i(\PartitionSet^0_i) \setminus \{\chi_{k_1 \{k_2\}}\}$ generates $\OutZ(L_i)$.  Thus $\rho_i(\PartitionSet^0_i)$ generates a subgroup of $\AutZ(L_i)$ which is isomorphic to $\Integer_{\ordermap(k_1)} \times \OutZ(L_i)$.
\end{proof}

\begin{rem}
Our hypotheses on the indexing of $\vertices$ ensure that $v_1$ and $v_N$ are leaves.
One may omit the corresponding terms $\OutZ(L_1)$ and $\OutZ(L_N)$ (and
any other terms corresponding to leaves) from the statement of Theorem \ref{NiceStructureOfOutZForTree}.  However, by including these terms we ensure that
the statement stays valid even if the hypotheses
on the indexing is dropped.
\end{rem}

\begin{rem}[Presenting $\CAutW$ in the case that $\Gamma$ is a tree]\label{PresentingCAutWRemark}
Since $\Ab$ is the direct product $\prod_{k \in K} \; \Integer_{\ordermap(k)}$, where $K$ is the set
\begin{multline*}
\{k \; | \; \exists i \;\; 1 \leq i, k \leq N, v_k \hbox{ is the minimal vertex in } L_i \hbox{ and } \\L_i \hbox{ does not contain the minimal vertex of } \Gamma \setminus S_k\},
\end{multline*}
one may write down a finite presentation of $\Ab$.  Since
each $W(L_i)$ is a free product of cyclic groups, one may use work
of Fouxe-Rabinovitch \cite{FRI} (see also \cite[footnote 1,
p.1]{McCulloghAndMiller}) and Gilbert \cite{GilbertAutOfFreeProduct}
to write down a finite presentation for $\OutZ(L_i)$.  Combining
these presentations in the standard way for presenting a direct
product gives a finite presentation for $\OutZ$.
Further, a finite presentation for $\InnW$ is well-known (cf. Lemma \ref{CenterOfW}) and, because each maximal complete subgroup is a direct product of two
cyclic groups, it is an easy exercise to write
down a finite presentation of $\QAutW$.  Combining the presentations of $\InnW$, $\OutZ$ and $\QAutW$ in the standard way for
presenting semi-direct products (including computing the image of
each generator of the normal factor under conjugation by each
generator of the other factor) one is then able to write down a
finite presentation of $\CAutW$ (cf. \cite{LaurenceThesis} \cite{Bernie}).
\end{rem}

\begin{rem}\label{SometimesSemiDirectProduct}
Consider the case that $\Gamma$ is an arbitrary connected graph.  Without loss of generality, assume that indices have been assigned to
elements of $\vertices$ so that if $d(v_1, v_i) < d(v_1, v_j)$, then $i < j$.
Unlike the tree case, a partial conjugation may have more than one link point and the sets $\PartitionSet_i^0$
do not partition $\PCMinus$.  However, taking inspiration from the tree case, we define a
partition $\PCMinus$ inductively as follows: write $\M_1 := \PCMinus$ and for each $1 \leq i \leq N$,
\begin{eqnarray*}
\PartitionSet'_i  & := & \{\chi_{j Q} \in \M_i \; | \; v_i \hbox{ is a link point of } \chi_{j Q}\} \\
\M_{i+1}  & := & \M_{i} \setminus \PartitionSet'_i.
\end{eqnarray*}
In some cases, but not all, this partition corresponds to a semi-direct product decomposition of $\OutZ$.
\end{rem}

\section{Applications}\label{ApplicationsSection}

In this section we describe a number of applications of the results above.
We begin with some applications of Corollary \ref{OutWInfiniteIffSIL}.

For a one ended word hyperbolic group
$G$, $\Out(G)$ is infinite if and only if $G$ splits over a
virtually cyclic subgroup with infinite center, either as an arbitrary HNN
extension or as an amalgam of groups with finite center \cite{Levitt}.  The
following corollary demonstrates that such splittings are not
possible in the case that $W$ is a graph product of primary cyclic groups.
The proof uses the fact that a graph product of primary cyclic groups is
word hyperbolic if and only if every circuit in $\Gamma$ of length four contains a chord \cite{MeierHyperbolicGraphProducts} and the fact that
each separating subgraph of $\Gamma$ corresponds to a splitting of $W$ as a free product with amalgamation (with the separating
subgraph generating the amalgamated subgroup).

\begin{cor}\label{OutWInfiniteMeansNotHyperbolic}
If $W$ is a graph product of primary cyclic groups and $W$ is a one ended word hyperbolic group, then $\OutW$ is finite.
\end{cor}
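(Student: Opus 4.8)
The plan is to prove the contrapositive: assuming $W$ is one-ended but $\OutW$ is infinite, I would exhibit a circuit of length four in $\Gamma$ with no chord, contradicting Meier's word-hyperbolicity criterion. So suppose $W$ is a graph product of primary cyclic groups, $W$ is one-ended, and $\OutW$ is infinite. By Corollary \ref{OutWInfiniteIffSIL}, since $\OutW$ is infinite, $\Gamma$ contains a SIL: there are vertices $v_i, v_j$ with $d(v_i,v_j)\geq 2$ and a connected component $R$ of $\Gamma\setminus(L_i\cap L_j)$ with $v_i,v_j\notin R$. First I would observe that the one-endedness of $W$ forces $\Gamma$ to be connected (a disconnected $\Gamma$ gives a free product decomposition of $W$, hence infinitely many ends, using Lemma \ref{CenterOfW} to rule out the degenerate cases), and in fact forces $L_i\cap L_j$ to be nonempty: if $L_i\cap L_j=\emptyset$ then $\Gamma\setminus(L_i\cap L_j)=\Gamma$ is connected and cannot contain a component $R$ missing both $v_i$ and $v_j$ while also being all of $\Gamma$. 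Hence $d(v_i,v_j)=2$ and $L_i\cap L_j\neq\emptyset$; pick $v_k\in L_i\cap L_j$, so $v_i,v_k,v_j$ is a path.

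Next I would use the fact, recalled just before the statement, that a separating subgraph of $\Gamma$ yields a splitting of $W$ as an amalgamated free product over the special subgroup generated by the separating subgraph. The subgraph $L_i\cap L_j$ separates $R$ from $\{v_i,v_j\}$, so it is a separating subgraph, giving $W = W(A)\ast_{W(L_i\cap L_j)} W(B)$ for appropriate full subgraphs $A,B$ with $A\cup B=\Gamma$, $A\cap B = L_i\cap L_j$, $R\subseteq A$ (say), $v_i,v_j\in B$. Since $W$ is one-ended, this amalgam must not be trivial in the sense that witnesses more than one end; the relevant point is that a one-ended group splitting as an amalgam over a subgroup $H$ forces $H$ to be infinite (if $H$ were finite the group would have infinitely many ends by Stallings, since both factors are proper). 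Therefore $W(L_i\cap L_j)$ is infinite. But $L_i\cap L_j$ is a complete graph — it is the intersection of two links, hence an independent-looking set, but actually I must be careful: $L_i\cap L_j$ need not be complete in general.

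Here is where the main obstacle lies, and I would resolve it as follows. The key extra input is that a graph product of primary cyclic groups has $W(\Delta)$ infinite exactly when $\Delta$ is not complete (a complete graph of primary cyclic groups is a finite abelian group; a non-complete $\Delta$ contains two non-adjacent vertices generating an infinite free product). So $W(L_i\cap L_j)$ infinite means $L_i\cap L_j$ contains two non-adjacent vertices $v_k, v_\ell$. Then $v_i$ is adjacent to both $v_k$ and $v_\ell$ (they lie in $L_i$), and likewise $v_j$ is adjacent to both. Thus $v_k, v_i, v_\ell, v_j$ is a circuit of length four in $\Gamma$; its only possible chords are $\{v_k,v_\ell\}$ and $\{v_i,v_j\}$, and both are absent since $v_k,v_\ell$ are non-adjacent and $d(v_i,v_j)=2$. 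This chordless four-circuit contradicts Meier's criterion for word-hyperbolicity, so $W$ cannot be word hyperbolic, completing the contrapositive. The one subtle step to check carefully is the Stallings-type argument that one-endedness forces the amalgamated subgroup to be infinite; I would cite this as standard, or alternatively argue directly that if $L_i\cap L_j$ were complete then $R$ together with $L_i\cap L_j$ and its complement would realize $W$ as an amalgam over a finite group, contradicting one end.
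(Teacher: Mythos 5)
Your proposal is correct and follows essentially the same route as the paper's proof: it combines Corollary \ref{OutWInfiniteIffSIL} with the dichotomy on $L_i \cap L_j$, using the amalgam over the separating subgraph $W(L_i\cap L_j)$ and the Hopf--Stallings Ends Theorem when $L_i\cap L_j$ is complete, and a chordless square violating Meier's hyperbolicity criterion when it is not. Phrasing the argument as a contrapositive rather than a contradiction, and spelling out the connectivity of $\Gamma$ and that $d(v_i,v_j)=2$, are only cosmetic differences.
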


\begin{proof}
Let $W$ be a graph product of primary cyclic groups which is one ended and word hyperbolic.  Suppose that $\Gamma$
contains a SIL.  By Theorem \ref{OutWInfiniteIffSIL}, there exist
$i, j, R$ as in Definition \ref{SILDefn}.  If $L_i \cap L_j$ is a complete graph, then $W(L_i \cap L_j)$ is finite and it follows from the Ends Theorem of Hopf and Stallings
(see, for example, \cite[Theorem I.8.32]{MartinsBook}) that
$W$ has infinitely-many ends---a contradiction to the hypothesis.  Thus $L_i \cap L_j$ is not a complete subgraph
and there exist non-adjacent vertices $v_x, v_y \in L_i \cap L_j$. Then $v_i v_x v_j v_y$
is a non-chordal square and $W$ is not word hyperbolic---again, a
contradiction to the hypothesis.
\end{proof}

\begin{rem}\label{ExamplesLikeMNSRemark}
In \cite{ExamplesOfHyperbolicGroups}, the authors construct one-ended hyperbolic groups with finite outer automorphism group and a non-trivial
JSJ decomposition in the sense of Bowditch (that is, the group has a non-trivial graph of groups decomposition with
two-ended edge groups and vertex groups which are either two-ended, maximal ``hanging fuchsian'', or non-elementary
quasiconvex subgroups not of the previous two types---for more details, see \cite{BoCutPts}).
Such groups necessarily have only the trivial JSJ decomposition in the sense of Sela since the outer automorphism groups are finite.
Using Corollary \ref{OutWInfiniteIffSIL}, one may construct examples of right-angled Coxeter groups with similar properties to the groups
described in \cite{ExamplesOfHyperbolicGroups}. In particular, if $W$ is a right-angled Coxeter group and the following properties hold:
\begin{enumerate}
\item $\Gamma \setminus \Delta$ is connected for each complete subgraph $\Delta$;
\item every circuit in $\Gamma$ of length four contains a chord;
\item $\Gamma \setminus \Lambda$ is disconnected for some subgraph $\Lambda$ which generates a virtually abelian group;
\item $\Gamma$ has no SIL;
\end{enumerate}
then $W$ is a one-ended hyperbolic group with a non-trivial JSJ decomposition in the sense of Bowditch and $\OutW$ is finite.
For example, the graph $\Gamma$ of Figure \ref{ExampleLikeMNS} has the desired properties
(with $\Lambda = \{v_1, v_4\}$).
\end{rem}

\begin{figure}
\centering
\includegraphics[scale=0.3]{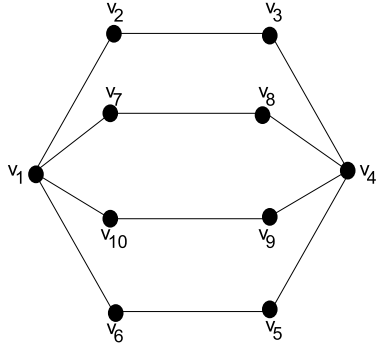}
\caption{The graph $\Gamma$ for Remark \ref{ExamplesLikeMNSRemark}. \label{ExampleLikeMNS}}
\end{figure}

If $W$ is a graph product of primary cyclic groups, then there exists a geometric action of $W$ on a CAT(0) space \cite{MeierHyperbolicGraphProducts}.
We say that $W$ has \emph{isolated flats} if there exists a geometric action of $W$ on a CAT(0) space with
isolated flats (see \cite{HruskaAndKleiner}).
To prove the lemma below we shall need only the following property of such
groups, which follows from the results in \cite{HruskaAndKleiner}:
\begin{enumerate}
\item [($\ast$)] if $W$ has isolated flats and $S_1, S_2 \subseteq W$ are subgroups
isomorphic to $\Integer \times \Integer$ and $S_1 \cap S_2 \neq \{id\}$, then $\langle S_1, S_2 \rangle$ is virtually-abelian.
\end{enumerate}

\begin{lem}\label{IsolatedFlatsLinkLemma}
If $W$ is a graph product of primary cyclic groups and $W$ is one ended with isolated flats and $1 \leq i < j \leq N$ are
such that $d(v_i, v_j) = 2$, then $W(L_i \cap L_j)$ is
virtually abelian.
\end{lem}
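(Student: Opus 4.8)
The plan is to prove this by contradiction, using property~($\ast$): I would assume that $W(L_i \cap L_j)$ is \emph{not} virtually abelian and then produce a pair of ``overlapping flats'', i.e.\ two subgroups of $W$ each isomorphic to $\Integer \times \Integer$, whose intersection is non-trivial but whose join is not virtually abelian. That configuration is forbidden by~($\ast$), so the assumption must fail.

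First I would record the elementary commuting structure forced by $d(v_i,v_j)=2$. By definition of the link, every vertex of $L_i \cap L_j$ is adjacent to both $v_i$ and $v_j$, so every element of the special subgroup $W(L_i \cap L_j)$ commutes with $v_i$ and with $v_j$, hence with $t := v_i v_j$. Since $d(v_i,v_j)=2$ the vertices $v_i,v_j$ are non-adjacent, so $\langle v_i,v_j\rangle \cong \langle v_i\rangle \ast \langle v_j\rangle$ by Lemma~\ref{SpecialSubgroupsEmbed}, and in particular $t$ has infinite order. Using the Deletion Condition (Lemma~\ref{DeletionCondition}) and the Normal Form Lemma (Lemma~\ref{EquivalenceOfReducedWords}) one checks that every non-trivial power of $t$ is a reduced word with support $\{v_i,v_j\}$; as $v_i,v_j \notin L_i \cap L_j$, this yields $\langle t\rangle \cap W(L_i \cap L_j) = \{id\}$. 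Consequently $\langle t\rangle W(L_i \cap L_j) = \langle t\rangle \times W(L_i \cap L_j)$ is a subgroup of $W$ in which $t$ is central.

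Now suppose for contradiction that $W(L_i \cap L_j)$ is not virtually abelian. Being a graph product of finite cyclic groups it satisfies the Tits alternative (for instance because it acts properly and cocompactly on a CAT(0) cube complex, or because $W$ has isolated flats and so is relatively hyperbolic with virtually abelian peripheral subgroups), so it contains a free subgroup $\langle a,b\rangle \cong F_2$; as elements of a free group, $a$ and $b$ have infinite order. Inside $\langle t\rangle \times W(L_i \cap L_j)$ set $S_1 := \langle t,a\rangle$ and $S_2 := \langle t,b\rangle$. Since $t$ commutes with $a$ and $\langle t\rangle \cap \langle a\rangle \subseteq \langle t\rangle \cap W(L_i \cap L_j) = \{id\}$ (and likewise for $b$), each $S_k$ is the internal direct product of two infinite cyclic groups, so $S_1 \cong S_2 \cong \Integer \times \Integer$. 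They meet non-trivially, as $t \in S_1 \cap S_2$ has infinite order, whereas $\langle S_1,S_2\rangle \supseteq \langle a,b\rangle \cong F_2$ is not virtually abelian. This contradicts property~($\ast$); hence $W(L_i \cap L_j)$ is virtually abelian.

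I expect the only genuine obstacle to be the appeal to the Tits alternative for $W(L_i \cap L_j)$, i.e.\ the assertion that a graph product of finite cyclic groups which is not virtually abelian must contain a rank-two free subgroup; the remaining ingredients (infinite order of $t$, the support/normal-form bookkeeping that identifies the two direct-product decompositions, and the fact that no virtually abelian group contains $F_2$) are routine. I would also note that one-endedness of $W$ plays no role in this particular argument: only the isolated flats hypothesis is used, via property~($\ast$).
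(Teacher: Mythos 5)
Your proposal is correct and follows essentially the same route as the paper: assume $W(L_i \cap L_j)$ is not virtually abelian, extract a free subgroup $\langle a,b\rangle$ via the Tits alternative (plus the fact that virtually solvable subgroups of CAT(0) groups are virtually abelian), and observe that $S_1=\langle v_iv_j,a\rangle$ and $S_2=\langle v_iv_j,b\rangle$ violate property~($\ast$). The only cosmetic difference is the justification of the Tits alternative — the paper cites that graph products of primary cyclic groups embed in Coxeter groups and are hence linear, rather than your cube-complex/relative-hyperbolicity route — and your observation that one-endedness is not used in this step matches the paper's argument.
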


\begin{proof}
Let $W$, $i$ and $j$ be as in the hypothesis of the lemma.  Suppose that $W(L_i \cap L_j)$ is
not virtually abelian. Graph products of primary cyclic groups are subgroups of Coxeter groups \cite[Corollary 5.11]{JanusAndSwicat} and hence
linear and satisfy the Tits Alternative.  Further, since $W$ acts geometrically on a CAT(0) space,
each virtually solvable subgroup is virtually abelian \cite[p. 249]{MartinsBook}.  It follows that there exist elements $a, b \in W(L_i \cap L_j)$
such that $\langle a, b \rangle$ is a free group of rank two. The subgroups $S_1=\langle v_iv_j,a\rangle$ and $S_2=\langle v_iv_j,b\rangle$
witness that $W$ does not have property $(\ast)$, since $\langle S_1\cup S_2\rangle$ contains the subgroup $\langle a, b \rangle$.
\end{proof}

\begin{cor}\label{IsolatedFlatsAndOutWInfinite}
If $W$ is a graph product of primary cyclic groups and $W$ is one ended with isolated flats and $\OutW$ is infinite,
then $W$ splits as a free product with amalgamation $W = A \ast_C B$ where
\begin{enumerate}
\item $A$ and $B$ are special subgroups; and
\item $C$ is an infinite virtually abelian special subgroup.
\end{enumerate}
\end{cor}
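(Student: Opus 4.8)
The plan is to combine Corollary~\ref{OutWInfiniteIffSIL}, which gives us a SIL, with Lemma~\ref{IsolatedFlatsLinkLemma}, which controls the group generated by the intersection of links. First I would invoke Corollary~\ref{OutWInfiniteIffSIL}: since $W$ is a graph product of primary cyclic groups and $\OutW$ is infinite, $\Gamma$ contains a SIL, so there exist $1 \leq i < j \leq N$ and a connected component $R$ of $\Gamma \setminus (L_i \cap L_j)$ with $d(v_i, v_j) \geq 2$ and $v_i, v_j \not\in R$. The presence of such an $R$ means $L_i \cap L_j$ is a separating subgraph of $\Gamma$: the vertex set of $\Gamma$ partitions as $(L_i \cap L_j) \sqcup R \sqcup R'$ where $R'$ is the (non-empty, since it contains $v_i$ and $v_j$) union of the remaining components. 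As recalled in the discussion preceding Corollary~\ref{OutWInfiniteMeansNotHyperbolic}, a separating subgraph of $\Gamma$ corresponds to a splitting of $W$ as a free product with amalgamation in which the separating subgraph generates the amalgamated subgroup: writing $A = W(R \cup (L_i \cap L_j))$, $B = W(R' \cup (L_i \cap L_j))$ and $C = W(L_i \cap L_j)$, we get $W = A \ast_C B$ with $A$, $B$, $C$ all special subgroups, establishing conclusion~(1).

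Next I would show $C = W(L_i \cap L_j)$ is an infinite virtually abelian group, which is conclusion~(2). If $d(v_i, v_j) = 2$, then Lemma~\ref{IsolatedFlatsLinkLemma} applies directly and gives that $W(L_i \cap L_j)$ is virtually abelian. One must still handle the possibility $d(v_i, v_j) \geq 3$; but in that case $L_i \cap L_j = \emptyset$, so $C$ would be trivial, which cannot occur since a trivial amalgamated subgroup would make $W$ a non-trivial free product and hence not one-ended---contradicting the hypothesis. Thus in fact $d(v_i, v_j) = 2$ necessarily, and Lemma~\ref{IsolatedFlatsLinkLemma} gives virtual abelianness of $C$.

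It remains to see that $C$ is infinite. Since $W$ is one-ended, the amalgam $W = A \ast_C B$ cannot have $C$ finite: by the Ends Theorem of Hopf and Stallings (cited in the proof of Corollary~\ref{OutWInfiniteMeansNotHyperbolic}), a non-trivial amalgamated free product over a finite subgroup has infinitely many ends. More concretely, $C = W(L_i \cap L_j)$ has finite order if and only if $L_i \cap L_j$ is a complete graph (using that $W$ is a graph product of primary cyclic groups, so every vertex has finite order); so it suffices to rule out $L_i \cap L_j$ being complete, which again follows from one-endedness via the Ends Theorem exactly as in the first paragraph of the proof of Corollary~\ref{OutWInfiniteMeansNotHyperbolic}. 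Hence $C$ is an infinite virtually abelian special subgroup, completing conclusion~(2).

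I expect the only real subtlety to be bookkeeping: making sure the decomposition $V(\Gamma) = (L_i \cap L_j) \sqcup R \sqcup R'$ genuinely yields $A \ast_C B$ with the claimed properties (in particular that $R'$ is non-empty and that $A$, $B$ properly contain $C$, so the amalgam is non-trivial), and confirming that $d(v_i, v_j) \geq 3$ really is incompatible with one-endedness rather than needing separate treatment. Both of these are routine given the machinery already assembled---the genuinely hard geometric input is entirely contained in Lemma~\ref{IsolatedFlatsLinkLemma} and Corollary~\ref{OutWInfiniteIffSIL}, which we are free to cite.
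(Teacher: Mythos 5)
Your proposal is correct and follows essentially the same route as the paper's proof: extract a SIL via Corollary \ref{OutWInfiniteIffSIL}, take the amalgam over the separating special subgroup $W(L_i \cap L_j)$, apply Lemma \ref{IsolatedFlatsLinkLemma} for virtual abelianness, and use one-endedness to rule out $C$ finite. In fact you are somewhat more careful than the paper, which silently assumes $d(v_i,v_j)=2$ and leaves the splitting implicit, whereas you dispose of the $d(v_i,v_j)\geq 3$ case (disconnected $\Gamma$ contradicts one-endedness) and write out the amalgam explicitly.
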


\begin{proof}[Proof of Corollary \ref{IsolatedFlatsAndOutWInfinite}]
Let $W$ and $\OutW$ be as in the hypothesis of the corollary.
By Theorem \ref{OutWInfiniteIffSIL}, there exist $1 \leq i < j \leq N$
such that $d(v_i, v_j) = 2$ and $L_i \cap L_j$ separates $\Gamma$.  By
Lemma \ref{IsolatedFlatsLinkLemma}, $W(L_i \cap L_j)$ is virtually abelian.  Since $W$ is one-ended, $W(L_i \cap L_j)$ is not finite.
The result follows, with $C = L_i \cap L_j$.
\end{proof}

We say that $W$ has \emph{property (NLC)} if for every CAT(0)
space $X$ on which $W$ acts geometrically, the visual boundary $\partial X$ (see \cite[p. 264]{MartinsBook}) is not locally connected.

\begin{cor}\label{OutWInfiniteMeansNLC}
If $W$ is a right-angled Coxeter group and $\OutW$ is infinite,
then $W$ has property (NLC).
\end{cor}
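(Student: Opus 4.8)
The plan is to extract a SIL from the hypothesis, turn it into an amalgamated product decomposition of $W$ over the special subgroup generated by $L_i \cap L_j$, and then feed the resulting configuration into the known criteria for non-local-connectedness of CAT(0) boundaries.

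Since a right-angled Coxeter group is a graph product of primary cyclic groups, Corollary \ref{OutWInfiniteIffSIL} gives that $\Gamma$ contains a SIL; fix $1 \le i < j \le N$ with $d(v_i, v_j) \ge 2$ and a connected component $R$ of $\Gamma \setminus (L_i \cap L_j)$ with $v_i, v_j \notin R$, as in Definition \ref{SILDefn}. Put $\Gamma_0 := L_i \cap L_j$, let $\Gamma_1 := R \cup \Gamma_0$, and let $\Gamma_2$ be the union of $\Gamma_0$ with all connected components of $\Gamma \setminus \Gamma_0$ other than $R$. Since $\Gamma_0$ separates $\Gamma$, the discussion preceding Corollary \ref{OutWInfiniteMeansNotHyperbolic} gives a splitting $W = W(\Gamma_1) *_{W(\Gamma_0)} W(\Gamma_2)$. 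As $v_i, v_j \notin \Gamma_0$ (a vertex never lies in its own link) and $v_i, v_j \notin R$, we have $v_i, v_j \in \Gamma_2 \setminus \Gamma_0$, and $R \ne \emptyset$ forces $W(\Gamma_0) \subsetneq W(\Gamma_1)$. Moreover, because $\Gamma_0 \subseteq L_i$ and $\Gamma_0 \subseteq L_j$, Lemma \ref{CentralizerOfAVertex} shows that both $v_i$ and $v_j$ centralise $W(\Gamma_0)$; because $v_i$ and $v_j$ are non-adjacent, the Normal Form lemma (Lemma \ref{EquivalenceOfReducedWords}) shows that $\langle v_i, v_j\rangle$ is infinite dihedral and that $\langle v_i, v_j\rangle \cap W(\Gamma_0) = \{1\}$. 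Consequently $W(\Gamma_0)$ is normalised in $W(\Gamma_2)$ by infinitely many distinct cosets, and $[W(\Gamma_2) : W(\Gamma_0)]$ is infinite.

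Let $X$ be any CAT(0) space on which $W$ acts geometrically; we must show $\partial X$ is not locally connected. Suppose first that $\Gamma_0$ is a complete graph (including the empty graph, which occurs when $d(v_i, v_j) \ge 3$), so that $W(\Gamma_0)$ is finite. Then the splitting exhibits $W$ as an amalgam over a finite subgroup with one factor of infinite index over it, so $W$ has infinitely many ends; and it is standard (see \cite{MartinsBook}, and it is also a consequence of the Mihalik--Ruane analysis recalled below) that a group acting geometrically on a CAT(0) space with more than two ends does not have locally connected visual boundary. Suppose instead that $\Gamma_0$ is not complete, so that $W(\Gamma_0)$ is infinite and finitely generated. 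Then the splitting $W = W(\Gamma_1) *_{W(\Gamma_0)} W(\Gamma_2)$, in which the infinite edge group $W(\Gamma_0)$ is normalised in $W(\Gamma_2)$ by the infinite-dihedral subgroup $\langle v_i, v_j\rangle$ meeting it trivially, is precisely the configuration treated by the non-local-connectedness criterion of Mihalik and Ruane for amalgamated products: in the Bass--Serre tree $T$ of the splitting, the vertex fixed by $\langle v_i, v_j\rangle$ has infinitely many incident edges all carrying the stabiliser $W(\Gamma_0)$, and the limit set of $W(\Gamma_0)$ in $\partial X$ locally separates $\partial X$ into the infinitely many ``pieces'' accumulating there, so that no point of that limit set possesses a neighbourhood basis of connected open sets. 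In either case $\partial X$ is not locally connected, and as $X$ was arbitrary, $W$ has property (NLC).

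The one genuinely technical step is the passage, in the case $W(\Gamma_0)$ infinite, from the combinatorics of the Bass--Serre tree to non-local-connectedness of the CAT(0) visual boundary itself; this is the content of the Mihalik--Ruane machinery, so the remaining work is merely to check that the SIL does produce their hypotheses --- a finitely generated infinite edge group normalised by infinitely many cosets in one vertex group --- which is exactly what the second paragraph establishes. The finite-edge-group case is routine once one observes, as above, that a SIL forces $W$ to have infinitely many ends.
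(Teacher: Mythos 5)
Your overall strategy coincides with the paper's: use Corollary \ref{OutWInfiniteIffSIL} to produce a SIL, dispose of the case of infinitely many ends by the standard fact about disconnected boundaries, and hand the remaining case to the Mihalik--Ruane machinery. The paper, however, does the last step by citing a precise statement, \cite[Theorem 3.2(2)]{LocalConnectivityofRACG}, and verifying its two hypotheses: that $(L_i\cap L_j,\,L_i\cap L_j,\,\{v_i,v_j\})$ is a \emph{virtual factor separator} and that $L_i\cap L_j$ is \emph{not a suspended separator}. You instead appeal to a ``Mihalik--Ruane criterion for amalgamated products'' whose hypothesis you state as ``a finitely generated infinite edge group normalised by infinitely many cosets in one vertex group,'' and you support its applicability with a heuristic picture of the Bass--Serre tree and limit sets rather than by checking the hypotheses of a theorem you have stated. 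That hypothesis is not the hypothesis of the Mihalik--Ruane results: what their arguments actually use is the ``virtual factor'' structure --- an infinite-order element commuting (up to finite index) with the edge group --- which, to your credit, your second paragraph does establish via the infinite dihedral group $\langle v_i, v_j\rangle$ centralising $W(L_i\cap L_j)$ and meeting it trivially, but which you then do not feed into the criterion you quote; and, in the right-angled Coxeter version the paper invokes, also a non-degeneracy condition of exactly the kind you omit, namely that the separator is not a suspended separator.

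That omitted condition is doing real work and cannot simply be dropped. For $\Gamma$ a $4$-cycle, the special subgroup generated by two opposite vertices is an infinite separating special subgroup, centralised by the infinite dihedral group on the other two vertices and meeting it trivially, yet $W\cong D_\infty\times D_\infty$ has visual boundary a circle, which is locally connected; the paper's Remark \ref{NoConverseForNLC} likewise illustrates how delicate the boundary criterion is. So the centralising data you establish is not by itself sufficient, and everything hinges on the exact index/suspendedness hypotheses of the theorem being cited --- which your proof never pins down (your index computation $[W(\Gamma_2):W(\Gamma_0)]=\infty$ points in the right direction but is not attached to any stated theorem, and the sketch of ``the limit set locally separates $\partial X$ into infinitely many pieces'' is a description of a hoped-for mechanism, not a verification of hypotheses). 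To close the gap, replace the black box by the actual statement used in the paper, \cite[Theorem 3.2(2)]{LocalConnectivityofRACG}, and check its hypotheses --- in particular address the suspended-separator condition --- or else quote the general Mihalik--Ruane amalgam theorem verbatim and verify each of its hypotheses from your splitting $W = W(\Gamma_1) \ast_{W(\Gamma_0)} W(\Gamma_2)$.
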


\begin{proof}
Assume that $\OutW$ is infinite.  By Theorem
\ref{OutWInfiniteIffSIL}, there exist $i, j, R$ as in Definition
\ref{SILDefn}. It follows that $W$ is not finite or two-ended.
If $W$ has infinitely-many ends, then $W$ has property (NLC).
Assume that $W$ is one ended.  Since $(L_i \cap L_j, L_i \cap
L_j, \{v_i, v_j\})$ is a `virtual factor separator' \cite[Definition 3.1]{LocalConnectivityofRACG} and $L_i \cap L_j$ is not a
`suspended separator' \cite[Definition 3.1]{LocalConnectivityofRACG},  we may apply \cite[Theorem 3.2(2)]{LocalConnectivityofRACG} to conclude that
$W$ has property (NLC).
\end{proof}

\begin{rem}\label{NoConverseForNLC}
We now demonstrate that the converse to Corollary \ref{OutWInfiniteMeansNLC} does not hold.
Let $W$ be the right-angled Coxeter group corresponding to the graph $\Gamma$ in
Figure \ref{BadExampleFigure}.  Observe that $\Gamma$ does not contain a
SIL, but $(\{v_2, v_3, v_4\}$, $\{v_2, v_3, v_4\}$, $\{v_1, v_6\})$ is
a virtual factor separator and $\{v_2, v_3, v_4\}$ is not a suspended separator.  Thus
$\OutW$ is finite, by Theorem \ref{OutWInfiniteIffSIL}, and $W$ has
property (NLC), by \cite[Theorem 3.2(2)]{LocalConnectivityofRACG}.
\begin{figure}
\centering
\includegraphics[scale=0.3]{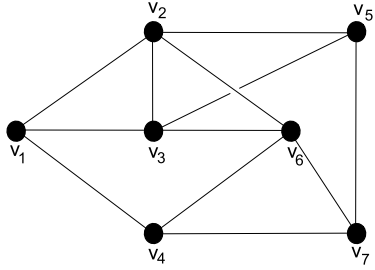}
\caption{The graph $\Gamma$ for Remark \ref{NoConverseForNLC}. \label{BadExampleFigure}}
\end{figure}
\end{rem}

The following application offers a glimpse of some geometry of $\AutW$.

\begin{cor}\label{AutWHyperbolic}
Let $W$ be a graph product of primary cyclic groups. Then $\AutW$ is word hyperbolic if and only if the following conditions are satisfied:
\begin{enumerate}
\item $\Gamma$ has no SIL;
\item every circuit in $\Gamma$ of length four contains a chord.
\end{enumerate}
\end{cor}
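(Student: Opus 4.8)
The plan is to prove both directions by combining Corollary~\ref{OutWInfiniteIffSIL} with the known hyperbolicity criterion for $W$ itself and a structural analysis of $\AutW$. First observe that since $W$ is a graph product of primary cyclic groups, $\CAutW = \AutW$ by Lemma~\ref{WhenGAutWEqualsAutW}(\ref{GPPCGCondition}), and $\QAutW$ is finite (as remarked after Theorem~\ref{TitsSplittingRecovered}). By Theorems~\ref{TitsSplittingRecovered} and~\ref{CSemiDirect}, $\AutW = (\InnW \rtimes \OutZ) \rtimes \QAutW$, so $\AutW$ is (virtually) an extension of $\OutZ$ by $\InnW$, and $\InnW$ is a finite-index subgroup of $W$ by the discussion following Lemma~\ref{CenterOfW}. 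A group is word hyperbolic if and only if a finite-index subgroup is, so it suffices to understand when the finite-index subgroup $\InnW \rtimes \OutZ$ is hyperbolic.

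For the ``if'' direction, suppose $\Gamma$ has no SIL and every four-circuit has a chord. The second condition means $W$ is word hyperbolic~\cite{MeierHyperbolicGraphProducts}, hence so is $\InnW$. The first condition, via Corollary~\ref{OutWInfiniteIffSIL}, gives that $\OutW$ is finite; since $\InnW$ has finite index in $W$ and $\OutZ \cong \OutZQ \leq \OutW$ (Theorem~\ref{CSemiDirect}), $\OutZ$ is finite. Thus $\InnW \rtimes \OutZ$ is a finite extension of the hyperbolic group $\InnW$, hence hyperbolic, and therefore $\AutW$ is hyperbolic.

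For the ``only if'' direction, suppose $\AutW$ is word hyperbolic. Then the finite-index subgroup $\InnW \rtimes \OutZ$ is hyperbolic, and so is its subgroup $\InnW \cong W$ up to finite index; since $W$ is then hyperbolic, every four-circuit in $\Gamma$ has a chord~\cite{MeierHyperbolicGraphProducts}, giving condition~(2). For condition~(1), suppose toward a contradiction that $\Gamma$ has a SIL, with $i,j,R$ as in Definition~\ref{SILDefn}. As in the proof of Corollary~\ref{OutWInfiniteIffSIL}, pick $v_r \in R$; the element $v_iv_j \in W$ has infinite order and commutes with the infinite-order partial conjugation $\chi_{iR}\chi_{jR}$ acting on $\langle v_r \rangle$'s conjugates --- more precisely, inside $\AutW$ the inner automorphism $\mathrm{ad}(v_iv_j)$ and the outer element $\overline{\chi_{iR}\chi_{jR}}$ generate a subgroup isomorphic to $\Integer^2$ (one checks they commute by direct computation and that neither has finite order, the latter for $\chi_{iR}\chi_{jR}$ exactly as in the proof of Corollary~\ref{OutWInfiniteIffSIL}). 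A hyperbolic group contains no $\Integer^2$, a contradiction. Hence $\Gamma$ has no SIL.

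The main obstacle I expect is the $\Integer^2$ construction in the ``only if'' direction: one must verify carefully that the relevant inner and partial-conjugation automorphisms genuinely commute in $\AutW$ (not merely up to inner automorphism) and that the subgroup they generate really is free abelian of rank two rather than, say, a Klein-bottle group or a finite-by-cyclic group. This requires pinning down the precise action of $\chi_{iR}\chi_{jR}$ relative to conjugation by $v_iv_j$; the computations in the proof of Corollary~\ref{OutWInfiniteIffSIL} --- where $(\chi_{iR}\chi_{jR})^n(v_i) = v_i$ and $(\chi_{iR}\chi_{jR})^n(v_r) = (v_jv_i)^n v_r (v_jv_i)^{-n}$ --- give exactly the raw material needed, so this should be routine but must be done with care. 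Everything else reduces cleanly to the quasi-isometry invariance of hyperbolicity together with the cited structural theorems.
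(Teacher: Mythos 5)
Your overall route is the paper's: when $\Gamma$ has a SIL you produce $\Integer \times \Integer \leq \AutW$ from the inner automorphism $\iota_{v_iv_j}$ and the product $\chi_{iR}\chi_{jR}$, and when $\Gamma$ has no SIL you use finiteness of $\OutW$ to reduce hyperbolicity of $\AutW$ to hyperbolicity of $W$ and then invoke the chordality criterion. The commutation check you flag as the main obstacle is indeed routine and is exactly what the paper asserts: since $v_i, v_j \notin R$, the automorphism $\chi_{iR}\chi_{jR}$ fixes $v_iv_j$, so it commutes with $\iota_{v_iv_j}$ on the nose (as $\phi\,\iota_g\,\phi^{-1} = \iota_{\phi(g)}$); evaluating at $v_i$ (which $\chi_{iR}\chi_{jR}$ fixes, while $(v_iv_j)^m$ lies outside the centralizer $W(S_i)$ for $m \neq 0$) shows the two infinite cyclic subgroups meet trivially, so the subgroup generated is genuinely $\Integer \times \Integer$.

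There is, however, a genuine misstep in how you obtain condition (2) in the ``only if'' direction. You argue: $\AutW$ hyperbolic, hence its finite-index subgroup $\InnW \rtimes \OutZ$ is hyperbolic, hence ``so is its subgroup $\InnW$''. At that point you have not yet ruled out a SIL, so you do not know that $\OutZ$ is finite, i.e.\ you do not know $\InnW$ has finite index in $\InnW \rtimes \OutZ$; and a finitely generated (even normal) subgroup of a word hyperbolic group need not be word hyperbolic --- the Rips construction yields finitely generated normal subgroups of hyperbolic groups that are not even finitely presented. The repair is simply to reorder, which is what the paper does: first deduce (1) from the $\Integer \times \Integer$ argument; then, with no SIL, Corollary \ref{OutWInfiniteIffSIL} gives $\OutW$ finite, so $\InnW$ has finite index in $\AutW$ as well as in $W$, whence $\AutW$ and $W$ are commensurable and hence quasi-isometric, and hyperbolicity of $\AutW$ forces $W$ to be hyperbolic, giving the chordal four-circuit condition. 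With that reordering your argument coincides with the paper's proof.
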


\begin{proof}
Assume that $\Gamma$ has an SIL.  Let $i, j, R$ be as in Definition \ref{SILDefn} and let $\iota_{v_i v_j}$ denote the inner automorphism
$w \mapsto v_i v_j w v_j^{-1} v_i^{-1}$.  Since $d(v_i, v_j) \geq 2$, $\iota_{v_i v_j}$ has infinite order.  The product
$\chi_{j R} \chi_{iR}$ also has infinite order and $\langle \iota_{v_i v_j}, (\chi_{j R} \chi_{iR})\rangle \cong \Integer \times \Integer$.  Thus $\AutW$ is not hyperbolic.

Assume that $\Gamma$ has no SIL.  By Corollary \ref{OutWInfiniteIffSIL}, $\OutW$ is finite and $\InnW$ is a
finite-index subgroup of $\AutW$.  But $\InnW$ is also a finite-index subgroup of $W$ (see $\S$\ref{Preliminaries}).  Thus $\AutW$ and $W$ are commensurable,
and hence quasi-isometric (see \cite[Example I.8.8.20(1)]{MartinsBook}).
The result follows immediately
from the characterization of word hyperbolic graph products
of primary cyclic groups described above and the fact that word-hyperbolicity is a quasi-isometry invariant \cite[Theorem III.H.1.9]{MartinsBook}.
\end{proof}

The authors of \cite{MoAndAnton} give sufficient conditions (distinct from those below) for $\AutW$ to split as $\InnW \rtimes \OutW$ in the case that $W$ is a right-angled
Coxeter group.  They then describe an application of their results to group extensions.  We now follow an analogous program for graph products of primary
cyclic groups.  For each $1 \leq i \leq N$, we write
$\Delta_i := \{v_j \in \vertices \; | \; S_i = S_j\}$ (note that $\Delta_i$ is a complete subgraph for each $1 \leq i \leq N$).

\begin{lem}\label{WhenAreSplittingsCompatibleLemma}
If $W$ is a graph product of directly-indecomposable cyclic groups and
$\phi(W(\Delta_i)) = W(\Delta_i)$ for each $1 \leq i \leq N$ and $\phi \in \QAutW$, then the
splittings of Theorems \ref{TitsSplittingRecovered} and \ref{CSemiDirect}
are compatible; that is, one may write $$\CAutW = \InnW \rtimes (\OutZ \rtimes \QAutW) \cong \InnW \rtimes \COutW.$$
\end{lem}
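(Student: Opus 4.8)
The plan is to show that, under the hypothesis, the subgroup $\QAutW$ (which is already a complement to $\AutZ$ in $\CAutW$ by Theorem \ref{TitsSplittingRecovered}) normalizes $\OutZ$ inside $\CAutW$, so that $\OutZ \rtimes \QAutW$ is a genuine subgroup of $\CAutW$ that complements $\InnW$. Combined with Theorem \ref{CSemiDirect} (which gives $\AutZ = \InnW \rtimes \OutZ$ and hence $\CAutW = (\InnW \rtimes \OutZ)\rtimes\QAutW$, i.e. $\InnW \rtimes (\OutZ \rtimes \QAutW)$ once we know the bracketed set is a subgroup), this yields the displayed identity. First I would fix $\phi \in \QAutW$ with $\phi(W(\Delta_i)) = W(\Delta_i)$ for all $i$, and a generator $\chi_{iK} \in \PCMinus \subset \OutZ$, and compute $\phi \chi_{iK} \phi^{-1}$ explicitly. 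Since $\phi$ maps maximal complete subgroups to maximal complete subgroups and fixes each $W(\Delta_i)$ setwise, $\phi$ permutes the vertices up to the abelian action on each $W(\Delta_i)$; in particular $\phi(v_i) \in W(\Delta_i)$, so $\supp{\phi(v_i)}$ has the same star as $v_i$, and $\phi$ induces a permutation $\sigma$ on $\{1,\dots,N\}$ with $S_{\sigma(i)} = S_i$. The key computation is that $\phi \chi_{iK}\phi^{-1}$ is again a conjugating automorphism sending each $v_j$ to a conjugate of itself, with conjugating element supported in $\phi(\langle K \rangle) = W(\sigma(K))$ — so $\phi\chi_{iK}\phi^{-1} \in \AutZ$, and more precisely it is a product of partial conjugations with operating letters in $\Delta_i$-translates of $v_i$ and domain a component of $\Gamma \setminus S_i$.

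The second step is to upgrade ``$\phi\chi_{iK}\phi^{-1} \in \AutZ$'' to ``$\phi\chi_{iK}\phi^{-1} \in \OutZ$'', i.e. to show the conjugate lies in the subgroup generated by $\PCMinus$ and not merely in $\AutZ = \InnW\rtimes\OutZ$. Here I would use the Restricted Alphabet Rewriting Lemma (Corollary \ref{RestrictedAlphabetRewriting}) together with Lemma \ref{CInnerAndQ}: write $\phi\chi_{iK}\phi^{-1}$ as a word in $\PC^{\pm1}$, rewrite it over $\PCMinus^{\pm1} \cup \mathcal{I}$, and argue that the $\mathcal{I}$-part is trivial. The cleanest route is to observe that conjugation by $\phi$ preserves the property defining $\PCMinus$ up to the indexing: because $\sigma$ preserves stars (and one may choose the indexing compatibly, or argue that the inner-automorphism ambiguity is absorbed), $\phi$ sends $\OutZ = \langle\PCMinus\rangle$ into $\AutZ$ and the image avoids $\InnW$ by the same minimality/exponent-sum argument used in Corollary \ref{PCMinimal} and Lemma \ref{CInnerAndQ}; since $\OutZ$ is a complement to $\InnW$ in $\AutZ$, any $\AutZ$-element with no inner part that is a product of conjugates of $\PCMinus$-generators lies in $\OutZ$. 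Doing the same for $\phi^{-1}$ shows $\phi\OutZ\phi^{-1} = \OutZ$.

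Once $\QAutW$ normalizes $\OutZ$, the set $\OutZ \rtimes \QAutW$ is a subgroup $H$ of $\CAutW$; it meets $\InnW$ trivially because $H \cap \InnW \subseteq \AutZ$ and an element of $H$ maps to $\QAutW$ trivially iff it lies in $\OutZ$, while $\OutZ \cap \InnW = \{id\}$ by Lemma \ref{CInnerAndQ}; and $\InnW \cdot H = \CAutW$ since $\InnW\cdot\OutZ = \AutZ$ (Theorem \ref{CSemiDirect}) and $\AutZ\cdot\QAutW = \CAutW$ (Theorem \ref{TitsSplittingRecovered}). As $\InnW$ is normal in $\CAutW$, this gives $\CAutW = \InnW \rtimes H = \InnW \rtimes (\OutZ\rtimes\QAutW)$, and $H \cong \COutWQ = \COutW$ under the quotient map by Theorem \ref{CSemiDirect}, establishing the displayed isomorphism.

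I expect the main obstacle to be the second step: verifying that conjugating a $\PCMinus$-generator by $\phi$ lands back in $\langle\PCMinus\rangle$ rather than just in $\AutZ$. The subtlety is that $\PCMinus$ is defined using a fixed indexing of $\vertices$ (Remark \ref{RemarkOnQ}(4)), and $\phi$ may permute vertices within the $\Delta_i$'s, so the ``omit one partial conjugation per operating letter'' choices need not be respected on the nose. The hypothesis $\phi(W(\Delta_i)) = W(\Delta_i)$ is exactly what controls this — it forces $S_{\sigma(i)} = S_i$ so the components of $\Gamma\setminus S_i$ are permuted by $\sigma$ in a way compatible with the minimal-index choice $j_i$ defining $\PCMinus$ — but spelling this out carefully, and handling the inner-automorphism slack coming from the abelian action of $\phi$ on each $W(\Delta_i)$, is where the real work lies; everything after that is the formal semidirect-product bookkeeping in the last paragraph.
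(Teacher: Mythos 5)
Your overall architecture is the same as the paper's: prove that $\QAutW$ normalizes $\OutZ$ by conjugating the generators $\chi_{i K} \in \PCMinus$, then do the semidirect-product bookkeeping (your final paragraph is fine, and in fact handles $\InnW \cap (\OutZ\rtimes\QAutW) = \{id\}$ cleanly via $\AutZ \cap \QAutW = \{id\}$ and Lemma \ref{CInnerAndQ}). The problem is that the step you yourself flag as ``where the real work lies'' --- showing $\phi^{-1}\chi_{i K}\phi \in \langle \PCMinus\rangle$ rather than merely in $\AutZ$ --- is the entire content of the lemma, and the argument you sketch for it does not work. Saying that the conjugate ``avoids $\InnW$'' and then concluding it lies in $\OutZ$ ``since $\OutZ$ is a complement to $\InnW$ in $\AutZ$'' is invalid: in the decomposition $\AutZ = \InnW \rtimes \OutZ$, an element fails to lie in $\OutZ$ exactly when its $\InnW$-factor is nontrivial, and such an element is typically not an inner automorphism either; so ``not inner'' gives you nothing, and ``trivial inner part'' is precisely what you are trying to prove. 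There are also local inaccuracies: the conjugating element appearing in $\phi^{-1}\chi_{i K}\phi(v_j)$ for $v_j \in K$ is $\phi^{-1}(v_i)$, which is supported in $\Delta_i$, not in $\phi(\langle K \rangle)$; and since $\phi(v_i)$ need not be a single vertex, there is no genuine permutation $\sigma$ of the vertices, only the containment $\supp \phi(v_i) \subseteq \Delta_i$.

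The gap closes with a short direct computation that you never make, and once it is made there is no ``inner-automorphism slack'' to absorb and no need for the Restricted Alphabet Rewriting Lemma. Since $v_j \in K$ forces $\Delta_j \subseteq K$ and $v_j \notin K$ forces $\Delta_j \cap K = \emptyset$, the hypothesis $\phi(W(\Delta_j)) = W(\Delta_j)$ gives $\supp \phi(v_j) \subseteq K$ in the first case and $\supp\phi(v_j) \cap K = \emptyset$ in the second; hence $\phi^{-1}\chi_{i K}\phi$ fixes every $v_j \notin K$ and sends each $v_j \in K$ to $\phi^{-1}(v_i)\, v_j\, \phi^{-1}(v_i)^{-1}$, with $\phi^{-1}(v_i) \in W(\Delta_i)$. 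Writing $\phi^{-1}(v_i) = \prod_{v_\ell \in \Delta_i} v_\ell^{a_\ell}$, the conjugate is exactly $\prod_{v_\ell \in \Delta_i} \chi_{\ell K}^{a_\ell}$: for $v_\ell \in \Delta_i$ one has $S_\ell = S_i$, so $K$ is a connected component of $\Gamma \setminus S_\ell$ and $\chi_{\ell K} \in \PC$, and moreover the minimal vertex of $\Gamma\setminus S_\ell$ coincides with that of $\Gamma \setminus S_i$, which lies outside $K$ because $\chi_{i K} \in \PCMinus$; hence each $\chi_{\ell K}$ is itself in $\PCMinus$. So the indexing worry you raise never materializes --- the hypothesis pins the conjugate down as an explicit product of $\PCMinus$-generators with the same domain $K$ --- and this explicit identity (which is how the paper argues) is what your proposal is missing.
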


\begin{proof}
Let $\phi \in \QAutW$ and $\theta \in \OutZ$. If $\theta$ is not the identity, then the product $\phi \theta$ is not an inner automorphism since
it acts non-trivially on the set of conjugacy classes of cyclically reduced involutions in $W$.  If $\theta$ is the identity but $\phi$ is not, then the product $\phi \theta = \phi$
is not an inner automorphism by Lemma \ref{CInnerAndQ}.  Thus, to show that $\InnW \cap \COutW = \{id\}$ and hence the result, it suffices to show
that $\COutW = \OutZ.\QAutW$.

Let $\phi \in \QAutW$ and $\chi_{i K} \in \PCMinus$.  It suffices to show that $\phi^{-1} \chi_{i K} \phi \in \langle \PCMinus \rangle$.
Let $1 \leq j \leq N$.  If $v_j \not \in K$, then $\Delta_j \cap K = \emptyset$ and the support of $\phi(v_j)$ is disjoint from $K$. Hence
$$\phi^{-1} \chi_{i K} \phi(v_j) = \phi^{-1} \bigl(\chi_{i K}(\phi(v_j))\bigr) = \phi^{-1} \phi(v_j) = v_j.$$
If $v_j \in K$, then $\Delta_j \subseteq K$ and the support of $\phi(v_j)$ is contained in $K$. Hence
$$\phi^{-1} \chi_{i K} \phi(v_j) = \phi^{-1} \bigl(\chi_{i K} (\phi(v_j))\bigr) = \phi^{-1} (v_i \phi(v_j) v_i^{-1}) = \phi^{-1}(v_i) v_j \bigl(\phi^{-1}(v_i)\bigr)^{-1} .$$
Thus we have
$$\phi^{-1} \chi_{i K} \phi(v_j) = \left\{%
\begin{array}{cl}
  v_j\;\;\;\;\;\;\; & \hbox{if } v_j \not \in K, \\
  \phi^{-1}(v_i) v_j \bigl(\phi^{-1}(v_i)\bigr)^{-1} & \hbox{if } v_j \in K.
\end{array}%
\right.$$
By hypothesis, $\phi^{-1}(v_i) \in W(\Delta_i)$.
For each $v_\ell \in \Delta_i$, the least element
of $\Gamma \setminus S_i$ is also the least element of $\Gamma \setminus S_\ell$ and, since $\chi_{i K} \in \PCMinus$, we have
$\chi_{\ell K} \in \PCMinus$.  Thus
$\phi^{-1} \chi_{i K} \phi$ may be written as a product of elements in $(\PCMinus)^{\pm 1}$.
\end{proof}



Recall that the center of $W$ is the special subgroup
generated by those vertices adjacent to every other vertex.  Recall also that
Lemma \ref{WhenGAutWEqualsAutW} gives some sufficient conditions for the equality $\CAutW =\AutW$.

\begin{cor}\label{ExtensionApplication}
If $W$ is a graph product of directly-indecomposable cyclic groups and the following conditions are satisfied:
\begin{enumerate}
\item \label{TrivialCenterHypothesis} $W$ has trivial center;
\item \label{CAutWEverythingHypothesis} $\CAutW =\AutW$;
\item \label{MCSHypothesis}
 $\phi(W(\Delta_i)) = W(\Delta_i)$ for each $1 \leq i \leq N$ and each $\phi \in \QAutW$;
\end{enumerate}
then all extensions
$$1 \to W \to E \to G \to 1$$ are trivial (that is, split extensions).
\end{cor}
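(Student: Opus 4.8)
The plan is to invoke the standard theory of extensions with non-abelian kernel. Recall that an extension $1 \to W \to E \to G \to 1$ determines an \emph{outer action} $\psi \colon G \to \OutW$; that a prescribed outer action $\psi$ is realised by some extension precisely when an obstruction class in $H^3(G, Z(W))$ vanishes; and that, when $\psi$ is realisable, the equivalence classes of extensions realising it form a torsor over $H^2(G, Z(W))$, where $Z(W)$ is the center of $W$, regarded as a $G$-module via $\psi$. Hypothesis (\ref{TrivialCenterHypothesis}) says $Z(W)$ is trivial, so both cohomology groups vanish, and hence every homomorphism $\psi \colon G \to \OutW$ is realised by an extension that is unique up to equivalence.

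With this in hand I would reduce the statement to a lifting problem. Suppose $\psi \colon G \to \OutW$ is the outer action of the given extension, and suppose $\psi$ lifts to a homomorphism $\widetilde\psi \colon G \to \AutW$ (that is, $\widetilde\psi$ composed with the natural map $\AutW \to \OutW$ equals $\psi$). Then the semidirect product $W \rtimes_{\widetilde\psi} G$ is a split extension of $G$ by $W$ whose outer action is again $\psi$; by the uniqueness just noted, the original extension is equivalent to $W \rtimes_{\widetilde\psi} G$ and therefore splits. So it suffices to lift every such $\psi$ to $\AutW$, and for that it is enough to exhibit a single homomorphism $\sigma \colon \OutW \to \AutW$ which is a section of the projection $\AutW \to \OutW$; then $\sigma \circ \psi$ is the required lift.

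Finally I would produce $\sigma$ from the structural results already established. By hypothesis (\ref{CAutWEverythingHypothesis}) we have $\CAutW = \AutW$, so $\OutW = \AutW / \InnW = \CAutW / \InnW$. By hypothesis (\ref{MCSHypothesis}) and Lemma \ref{WhenAreSplittingsCompatibleLemma} we have $\CAutW = \InnW \rtimes \COutW$, so the composite $\COutW \hookrightarrow \CAutW = \AutW \to \OutW$ is an isomorphism; take $\sigma$ to be its inverse.

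The bookkeeping with extension theory — checking that $W \rtimes_{\widetilde\psi} G$ has outer action $\psi$, and that an extension equivalent to a split one is itself split — is routine, and the only substantive ingredient is the existence of the section $\sigma$, which is exactly the content of Lemma \ref{WhenAreSplittingsCompatibleLemma} under the stated hypotheses. I therefore do not anticipate a genuine obstacle; the proof is an assembly of the pieces, with the trivial-center hypothesis doing the essential work of killing the cohomological obstructions so that a lift of the outer action yields a splitting.
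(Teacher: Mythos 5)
Your proposal is correct and follows essentially the same route as the paper: hypotheses (\ref{CAutWEverythingHypothesis}) and (\ref{MCSHypothesis}) together with Lemma \ref{WhenAreSplittingsCompatibleLemma} give $\AutW = \InnW \rtimes \COutW$, hence a section of $\AutW \to \OutW$ used to lift the outer action, and the trivial-center hypothesis gives uniqueness of the extension realising that outer action. The only cosmetic difference is that you spell out the vanishing of $H^2(G,Z(W))$ and $H^3(G,Z(W))$, whereas the paper cites this uniqueness statement directly from Brown (Corollary IV.6.8).
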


\begin{proof}
Conditions (\ref{CAutWEverythingHypothesis}) and (\ref{MCSHypothesis}) and
Lemma \ref{WhenAreSplittingsCompatibleLemma} give that $\AutW = \InnW \rtimes \OutW$.
Thus each homomorphism $\psi : G \to \OutW$ lifts to a homomorphism $\hat{\psi}: G \to
\AutW$ and hence determines a semidirect product $W
\rtimes_{\hat{\psi}} G$.  Condition (\ref{TrivialCenterHypothesis}) of the hypothesis ensures that there is exactly one
extension of $G$ by $W$ (up to equivalence) corresponding to any
homomorphism $\psi: G \to \OutW$ \cite[Corollary IV.6.8 p.104]{KenBrown}.
\end{proof}

In the case that $W$ is a right-angled Coxeter group, we may express the hypotheses of Corollary \ref{ExtensionApplication} entirely in terms
of the labeled-graph $\Gamma$.

\begin{cor}\label{RightAngledExtension}
If $W$ is a right-angled Coxeter group and the following conditions are satisfied:
\begin{enumerate}
\item \label{GraphCenterCondition} $\Gamma \setminus S_i \neq \emptyset$ for each $1 \leq i \leq N$;
\item \label{SymmetryCondition} $f(\Delta_i) = \Delta_i$ for each labeled-graph isomorphism $f\!:(\Gamma, \ordermap) \to (\Gamma, \ordermap)$ and each $1 \leq i \leq N$;
\item \label{StarCondition} $S_i \subseteq S_j$ if and only if $S_i = S_j$ for each $1 \leq i, j \leq N$;
\end{enumerate}
then all extensions
$$1 \to W \to E \to G \to 1$$ are trivial (that is, split extensions).
\end{cor}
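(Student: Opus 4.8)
The plan is to deduce Corollary~\ref{RightAngledExtension} from Corollary~\ref{ExtensionApplication} by verifying that hypotheses~(\ref{GraphCenterCondition})--(\ref{StarCondition}) of the former imply hypotheses~(\ref{TrivialCenterHypothesis})--(\ref{MCSHypothesis}) of the latter. Two of these implications are immediate. For hypothesis~(\ref{TrivialCenterHypothesis}): by Lemma~\ref{CenterOfW} the center of $W$ is the special subgroup on $\{v_i \;:\; d_\Gamma(v_i,v_j)\le 1 \text{ for all } j\} = \{v_i \;:\; \Gamma\setminus S_i = \emptyset\}$, which is empty by~(\ref{GraphCenterCondition}); hence $W$ has trivial center. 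For hypothesis~(\ref{CAutWEverythingHypothesis}): a right-angled Coxeter group is a graph product of primary cyclic groups (each $\ordermap(i)=2=2^1$), so $\CAutW=\AutW$ by Lemma~\ref{WhenGAutWEqualsAutW}(\ref{GPPCGCondition}). It remains to establish hypothesis~(\ref{MCSHypothesis}): that $\phi(W(\Delta_i))=W(\Delta_i)$ for every $\phi\in\QAutW$ and every $i$.

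To this end I would first pin down the structure of $\QAutW$ in the right-angled Coxeter case. By the classification of generators of $\AutW$ for a right-angled Coxeter group (Tits~\cite{Tits}, M{\"u}hlherr~\cite{Bernie}, Laurence~\cite{LaurenceThesis}), $\AutW$ is generated by the inner automorphisms, the automorphisms $f_\ast$ induced by graph automorphisms $f$ of $\Gamma$, the partial conjugations $\PC$, and the transvections $\tau_{ij}\colon v_i\mapsto v_iv_j$ (fixing the other generators), defined for pairs $i\ne j$ with $S_i\subseteq S_j$. One checks that each $f_\ast$ and each $\tau_{ij}$ lies in $\QAutW$: for $\tau_{ij}$, if $v_i\in\Delta\in\MCS$ then every vertex of $\Delta$ lies in $S_j$, so $v_j\in\Delta$ by maximality and $\tau_{ij}(W(\Delta))=W(\Delta)$ using that $W(\Delta)$ is finite, whereas if $v_i\notin\Delta$ then $\tau_{ij}$ fixes $W(\Delta)$ pointwise. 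On the other hand the inner automorphisms and the partial conjugations lie in $\AutZ=\ker r$, where $r\colon\AutW\to\QAutW$ is the retraction of Proposition~\ref{RetractionThm}. Applying $r$ to the generating set above and using that $r$ restricts to the identity on $\QAutW$, we conclude that $\QAutW$ is generated by the $f_\ast$ together with the $\tau_{ij}$.

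Now I would use hypotheses~(\ref{SymmetryCondition}) and~(\ref{StarCondition}) to show that every such generator preserves each $W(\Delta_i)$; since $\{\phi\in\AutW \;:\; \phi(W(\Delta_i))=W(\Delta_i)\text{ for all }i\}$ is a subgroup, this gives hypothesis~(\ref{MCSHypothesis}). For a graph automorphism $f$, hypothesis~(\ref{SymmetryCondition}) gives $f(\Delta_i)=\Delta_i$, hence $f_\ast(W(\Delta_i))=W(f(\Delta_i))=W(\Delta_i)$. For a transvection $\tau_{ij}$ with $i\ne j$, hypothesis~(\ref{StarCondition}) upgrades $S_i\subseteq S_j$ to $S_i=S_j$, i.e.\ $v_j\in\Delta_i$; then for each $\ell$ with $\Delta_\ell\ne\Delta_i$ neither $v_i$ nor $v_j$ belongs to $\Delta_\ell$, so $\tau_{ij}$ fixes $W(\Delta_\ell)$ pointwise, whereas $\tau_{ij}$ sends $W(\Delta_i)$ into itself (mapping $v_i\mapsto v_iv_j\in W(\Delta_i)$ and fixing the remaining generators of $\Delta_i$), hence onto $W(\Delta_i)$ by finiteness. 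This completes the verification of hypothesis~(\ref{MCSHypothesis}), and Corollary~\ref{ExtensionApplication} then yields that every extension $1\to W\to E\to G\to 1$ splits.

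The step demanding the most care is the appeal to the list of generators of $\AutW$ for a right-angled Coxeter group and the resulting description of $\QAutW$; although this is by now classical, it should be stated and referenced precisely, and one must confirm (as sketched above) that each $\tau_{ij}$ genuinely lies in $\QAutW$ so that the retraction argument produces exactly these generators. A more self-contained alternative for hypothesis~(\ref{MCSHypothesis}) is to note that hypothesis~(\ref{StarCondition}) is equivalent to $\Delta_i=\bigcap_{\Delta\in\MCS,\, v_i\in\Delta}\Delta$, and, using the identity $W(A)\cap W(B)=W(A\cap B)$ for special subgroups, to track the permutation of $\MCS$ induced by $\phi$ together with the nerve structure it preserves; this route, however, still ultimately invokes hypotheses~(\ref{SymmetryCondition})--(\ref{StarCondition}) and seems less clean to write out than the generator-based argument.
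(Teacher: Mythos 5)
Your proposal is correct and follows essentially the same route as the paper: verify the three hypotheses of Corollary \ref{ExtensionApplication}, using condition (\ref{GraphCenterCondition}) for trivial center, Lemma \ref{WhenGAutWEqualsAutW}(\ref{GPPCGCondition}) for $\CAutW=\AutW$, and the fact that for a right-angled Coxeter group $\QAutW$ is generated by graph symmetries and the transvections $v_i\mapsto v_iv_j$ with $S_i\subseteq S_j$, which conditions (\ref{SymmetryCondition}) and (\ref{StarCondition}) force to preserve each $W(\Delta_i)$. The only difference is that you justify the generating set of $\QAutW$ (via the classical generators of $\AutW$ and the retraction $r$) where the paper simply cites it as known.
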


\begin{proof}
Condition (\ref{GraphCenterCondition}) implies that $W$ has trivial center.  By Lemma \ref{WhenGAutWEqualsAutW}(\ref{GPPCGCondition}), $\CAutW = \AutW$.  Since $W$ is a right-angled Coxeter group, the group $\QAutW$ is generated by those automorphisms induced by symmetries of $\Gamma$ and
by automorphisms of the form $$v_i \mapsto v_i v_j, \;\;\; v_k \mapsto v_k \text{ for each } k \neq i,$$
for some $1 \leq i, j \leq N$ for which $i \neq j$ and $S_i \subseteq S_j$.
It follows that Conditions (\ref{SymmetryCondition}) and (\ref{StarCondition}) imply that
$\phi(W(\Delta_i)) = W(\Delta_i)$ for each $1 \leq i \leq N$ and each $\phi \in \QAutW$.
\end{proof}

\begin{rem}
We conjecture that, for an arbitrary graph product of directly-indecomposable cyclic groups $W$, the analogue of Corollary \ref{RightAngledExtension}
is true provided we replace Condition (\ref{StarCondition}) by the following:
\begin{enumerate}
\item [(\ref{StarCondition}')] if $S_i \subseteq S_j$ and either $\ordermap(j)$ divides $\ordermap(i)$ or $\ordermap(i) = \infty$, then $S_i = S_j$.
\end{enumerate}

\end{rem}

We now consider some applications of Theorem \ref{NiceStructureOfOutZForTree}.  Recall that we write $\vertices$ (resp. \edges) for the set of vertices
(resp. edges) of $\Gamma$ and $N = \abs{\vertices}$. Let $\leaves \subset \vertices$ denote the set of vertices
which have valence one (the `leaves' of $\Gamma$).  

\begin{cor}\label{OutWVTFandFVCD}
If $W$ is a graph product of primary cyclic groups and $\Gamma$ is a
tree, then $\OutW$ is virtually torsion-free and
$$\vcd(\OutW) = \abs{\leaves}-2.$$
\end{cor}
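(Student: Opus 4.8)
The plan is to use the structural results already proved, in particular Theorems \ref{TitsSplittingRecovered}, \ref{CSemiDirect} and \ref{NiceStructureOfOutZForTree}, to reduce the computation of $\vcd(\OutW)$ to a computation about free products of cyclic groups and their automorphism groups. Since $W$ is a graph product of primary cyclic groups with $\Gamma$ a tree, Corollary \ref{OutWInfiniteIffSIL} together with the fact that a tree has no circuits at all tells us that $\OutZ$ is abelian exactly when $\Gamma$ has no SIL; but in any case, $\QAutW$ is finite (noted after Theorem \ref{TitsSplittingRecovered}), and $\InnW$ has finite index in $W$ (noted after Lemma \ref{CenterOfW}). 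The key point is that $\CAutW = \AutW$ here by Lemma \ref{WhenGAutWEqualsAutW}(\ref{GPPCGCondition}), so $\OutW = \COutWQ \cong \COutW = \OutZ \rtimes \QAutW$ by Theorem \ref{CSemiDirect}. Since $\QAutW$ is finite, $\OutZ$ has finite index in $\OutW$, so $\vcd(\OutW) = \vcd(\OutZ)$ and $\OutW$ is virtually torsion-free provided $\OutZ$ is (a finite-index subgroup of a VTF group need not help, but here we go the other way: $\OutZ \leq \OutW$ finite index, so it suffices that $\OutZ$ itself be virtually torsion-free and then $\OutW$ is too, since $\OutW$ has a finite-index subgroup, namely a torsion-free finite-index subgroup of $\OutZ$).

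Next I would invoke Theorem \ref{NiceStructureOfOutZForTree}: $\OutZ \cong \Ab \times \bigl(\prod_{i=1}^N \OutZ(L_i)\bigr)$ where $\Ab$ is a finite abelian group (case (2) of that theorem, since $W$ is a graph product of primary cyclic groups). Hence $\vcd(\OutZ) = \sum_{i=1}^N \vcd(\OutZ(L_i))$, because vcd is additive over direct products and the finite factor $\Ab$ contributes $0$. Each $W(L_i)$ is a free product of finitely many (primary) cyclic groups: if $v_i$ has valence $d_i$, then $L_i$ has $d_i$ vertices and no edges (since $\Gamma$ is a tree, the neighbours of a vertex are pairwise non-adjacent), so $W(L_i) = \Integer_{m_1} \ast \cdots \ast \Integer_{m_{d_i}}$, a free product of $d_i$ nontrivial finite cyclic groups. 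The virtual cohomological dimension of the outer automorphism group of a free product of finite groups is classically known; for a free product of $k \geq 2$ finite (nontrivial) groups, $\Out$ is virtually torsion-free and $\vcd = k-2$ (this follows from the work of Fouxe-Rabinovitch and Gilbert cited in Remark \ref{PresentingCAutWRemark}, together with the realization of $\Out$ acting on an appropriate contractible complex / outer space for free products — e.g. Krstić–Vogtmann). For $k = 1$, $W(L_i) = \Integer_{m}$ is finite, $\OutZ(L_i)$ is trivial, and $\vcd = 0 = \max(k-2,0)$. For $k = 0$, i.e. $v_i$ isolated, which cannot happen in a tree with $\geq 3$ vertices. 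So $\vcd(\OutZ(L_i)) = \max(d_i - 2, 0)$.

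Therefore $\vcd(\OutW) = \sum_{i=1}^N \max(d_i - 2, 0)$ where $d_i$ is the valence of $v_i$. To finish I would compute this sum for a tree. In a tree with $N$ vertices there are $N-1$ edges, so $\sum_{i=1}^N d_i = 2(N-1)$. Write $|\leaves|$ for the number of leaves (valence-one vertices); every non-leaf has valence $\geq 2$, so $\max(d_i-2,0) = d_i - 2$ for non-leaves and $= 0$ for leaves. Hence
\begin{align*}
\vcd(\OutW) &= \sum_{v_i \notin \leaves} (d_i - 2) = \Bigl(\sum_{v_i \notin \leaves} d_i\Bigr) - 2(N - |\leaves|) \\
&= \Bigl(2(N-1) - |\leaves|\Bigr) - 2N + 2|\leaves| = |\leaves| - 2.
\end{align*}
The main obstacle I anticipate is not the arithmetic but citing and correctly applying the result that $\vcd\bigl(\Out(\Integer_{m_1} \ast \cdots \ast \Integer_{m_k})\bigr) = k - 2$ for $k \geq 2$: one must confirm that the relevant automorphism group appearing as $\OutZ(L_i)$ — generated by the images of the partial conjugations, per Proposition \ref{QiInCaseGammaATree} — is indeed the full ``symmetric / pure'' outer automorphism group of the free product whose vcd is $k-2$, rather than some proper subgroup or extension with different vcd. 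This amounts to checking that the factor automorphisms (which would be needed for the full $\Out$) are already accounted for by $\QAutW$, which is finite and hence does not affect vcd, so that the factor $\OutZ(L_i)$ in Theorem \ref{NiceStructureOfOutZForTree} has the same vcd as the full $\Out$ of the free product. Once that identification is secured, virtual torsion-freeness of $\OutW$ follows from virtual torsion-freeness of each $\Out(W(L_i))$ (again Fouxe-Rabinovitch/Gilbert, or Krstić–Vogtmann) together with finiteness of $\Ab$ and of $\QAutW$.
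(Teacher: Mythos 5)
Your overall route is the same as the paper's: use Lemma \ref{WhenGAutWEqualsAutW}(\ref{GPPCGCondition}) together with Theorems \ref{TitsSplittingRecovered}, \ref{CSemiDirect} and \ref{NiceStructureOfOutZForTree} to see that $\prod_{i=1}^N \OutZ(L_i)$ is (isomorphic to) a finite-index subgroup of $\OutW$, quote the classical results on automorphisms of free products of finite groups for each factor $\OutZ(L_i)$, and finish with the tree/handshake arithmetic, which agrees with the paper's computation of $\sum_i \max\{0,\abs{L_i}-2\} = \abs{\leaves}-2$. Your side worry about whether $\OutZ(L_i)$ is the full outer automorphism group of the free product is not a problem: factor automorphisms and permutations of factors contribute only a finite amount, and the paper cites the literature (Collins for virtual torsion-freeness, Krsti\'c--Vogtmann and McCullough--Miller for $\vcd = \max\{0,\abs{L_i}-2\}$) in a way that applies to the partial-conjugation subgroup, exactly as you anticipate.

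The one genuine gap is the assertion that $\vcd(\OutZ) = \sum_i \vcd(\OutZ(L_i))$ ``because vcd is additive over direct products.'' Additivity is not a general fact: the result in Brown's book gives only the inequality $\vcd(A\times B) \le \vcd(A)+\vcd(B)$, and strict inequality can occur for general groups, so the equality needs a separate justification. The paper supplies the missing lower bound explicitly: inside each $\OutZ(L_i)$ it exhibits a free abelian subgroup of rank $\max\{0,\abs{L_i}-2\}$, generated by products of partial conjugations of the form $\chi_{j_2\{j_3\}}\chi_{j_1\{j_3\}}, \dots, \chi_{j_2\{j_M\}}\chi_{j_1\{j_M\}}$, so that the direct product contains a free abelian subgroup of rank $\sum_i \max\{0,\abs{L_i}-2\}$; since $\vcd$ is monotone under passing to subgroups, this forces equality with the upper bound. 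You should add this lower-bound step (or an equivalent argument) to make the computation of $\vcd$ complete; with it, your proof coincides with the paper's.
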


\begin{proof}
It follows from Lemma \ref{WhenGAutWEqualsAutW}(1) and Theorem \ref{NiceStructureOfOutZForTree} that the product
$\prod_{i=1}^N \OutZ(L_i)$ is isomorphic to a subgroup of finite index in $\OutW$. Thus it suffices to
calculate the virtual cohomological dimension of this product.

For each $i$, $W(L_i)$ is a free product of finite groups and so $\OutZ(L_i)$ is
virtually torsion-free \cite{Collins} and $\vcd(\OutZ(L_i)) =
\max\{0, \abs{L_i} - 2\}$ \cite{KrsticAndVogtmann} \cite[p.
67]{McCulloghAndMiller}.  The direct product of virtually torsion-free groups is virtually torsion-free, so
$\prod_{i=1}^N \OutZ(L_i)$ is virtually torsion-free.

It follows from \cite[Proposition VII.2.4(b) p.187]{KenBrown} (see also \cite[VII.11 exercise 2 p.229]{KenBrown}) that
the virtual cohomological dimension of a direct product is at most the sum of the virtual cohomological dimensions of the factors.  Thus we have
$$\vcd\Bigl(\prod_{i=1}^N \OutZ(L_i)\Bigr) \leq \sum_{i = 1}^{N} \max\{0, \abs{L_i} - 2\}.$$

For each $1 \leq i \leq N$, $\Out^0 \, W(L_i)$ contains a free abelian subgroup of rank $\max\{0, \abs{L_i} - 2\}$ (if $L_i = \{v_{j_1}, \dots, v_{j_M}\}$,
then $\{(\chi_{j_2\{j_3\}} \chi_{j_1\{j_3\}})$, $\dots$, $(\chi_{j_2\{j_M\}} \chi_{j_1\{j_M\}})\}$ generates a free abelian subgroup).  It follows that
the product $\prod_{i=1}^N \OutZ(L_i)$ contains a free abelian subgroup of
rank $$\sum_{i = 1}^{N} \max\{0, \abs{L_i} - 2\}$$  and
$$\vcd\Bigl(\prod_{i=1}^N \OutZ(L_i)\Bigr) = \sum_{i = 1}^{N} \max\{0, \abs{L_i} - 2\}.$$


Finally,
\begin{eqnarray*}
\sum_{i = 1}^{N} \max\{0, \abs{L_i} - 2\} & = & \biggl(\sum_{i = 1}^{N} (\abs{L_i} - 2)\biggr) + \abs{\leaves} \\
 & = & \biggl(\sum_{i = 1}^{N} \abs{L_i}\biggr) - 2 N +\abs{\leaves} \\
 & = &  2\abs{\edges} - 2 \abs{\vertices} + \abs{\leaves} \\
 & = &  2\abs{\edges} - 2 (\abs{\edges}+1) + \abs{\leaves} \\
 & = &  \abs{\leaves}-2.
\end{eqnarray*}
(the first equality holds because $\abs{L_i} - 2 < 0$ if and only if $v_i \in \leaves$ and $\abs{L_i}-2 = -1$,  the third equality holds because
each edge in $\Gamma$ contributes to $\abs{L_i}$ for two values of $i$ and the fourth equality holds because $\abs{\vertices} = \abs{\edges} + 1$).
\end{proof}

The following corollary extends the main results from \cite{MoAndSava}.

\begin{cor}\label{RLofNF}
If $W$ is a right-angled Artin group and $\Gamma$ is a
tree, then there exist regular
languages of normal forms for $\OutZ$ and $\AutZ$.
\end{cor}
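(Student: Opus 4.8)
The plan is to reduce the statement, via Theorems \ref{NiceStructureOfOutZForTree} and \ref{CSemiDirect}, to a short list of ``building-block'' groups whose regular normal forms are either classical or are supplied by \cite{MoAndSava}, and then to use the (routine) fact that the class of groups admitting a regular language of normal forms is closed under finite direct products and finite semidirect products.

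First I would identify the building blocks. Since $\Gamma$ is a tree, no two vertices of a link $L_i$ can be adjacent (a triangle would be a circuit of length three), so each $L_i$ is a discrete graph; as $W$ is a right-angled Artin group, $W(L_i)$ is therefore a free product of $\abs{L_i}$ infinite cyclic groups, that is, a free group of rank $\abs{L_i}$. Under this identification the partial conjugations with operating letter in $L_i$ are precisely the basis-conjugating (McCool) automorphisms $x_k \mapsto x_j x_k x_j^{-1}$ (others fixed), so $\AutZ(L_i)$ and $\OutZ(L_i)$ are exactly the pure symmetric, resp.\ pure symmetric outer, basis-conjugating automorphism groups of a finitely generated free group; the main results of \cite{MoAndSava} provide regular languages of normal forms for them. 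The remaining building blocks are immediate: the free abelian group $\Ab$ of Theorem \ref{NiceStructureOfOutZForTree}(1) has an obvious regular language of normal forms (for instance $\{a_1^{m_1}\cdots a_k^{m_k} : m_i \in \Integer\}$ over $\{a_1^{\pm1},\dots,a_k^{\pm1}\}$); and $\InnW$, which by Lemma \ref{CenterOfW} is isomorphic to a special subgroup of $W$ and hence is itself a right-angled Artin group, has a regular language of normal forms (e.g.\ the language of ShortLex-least representatives). This last point is also where the hypothesis that $W$ is a right-angled Artin group is genuinely used: it is what makes the $W(L_i)$ free groups, so that \cite{MoAndSava} applies.

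Next I would record the closure lemma. Suppose $G_1,\dots,G_r$ admit regular languages of normal forms $L_1,\dots,L_r$, chosen --- after relabelling, which costs nothing --- over pairwise disjoint alphabets. Then $L_1 L_2 \cdots L_r$ is a regular language of normal forms for any group $K$ in which multiplication induces a bijection $G_1 \times \cdots \times G_r \to K$; in particular for the direct product $G_1 \times \cdots \times G_r$, and (taking $r = 2$) for any semidirect product $N \rtimes Q$. Indeed $L_1 \cdots L_r$ is regular because a concatenation of regular languages is regular, and evaluation restricted to it is a bijection because, the alphabets being disjoint, each word in it factors uniquely as an $L_1$-word followed by $\dots$ followed by an $L_r$-word, on which evaluation is the composite of the bijection $L_1 \times \cdots \times L_r \to G_1 \times \cdots \times G_r$ with the bijective multiplication $G_1 \times \cdots \times G_r \to K$.

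Finally, the assembly. By Theorem \ref{NiceStructureOfOutZForTree}, $\OutZ \cong \Ab \times \prod_{i=1}^N \OutZ(L_i)$, and every factor has a regular language of normal forms by the first step, so the closure lemma yields one for $\OutZ$. By Theorem \ref{CSemiDirect}, $\AutZ = \InnW \rtimes \OutZ$, and both factors now have regular languages of normal forms, so a further application of the closure lemma yields one for $\AutZ$. I expect the only mildly delicate point to be the reduction in the second step --- verifying that, for a discrete link, Laurence's partial conjugations are exactly McCool's basis-conjugating generators, so that \cite{MoAndSava} really does apply to $\OutZ(L_i)$ and $\AutZ(L_i)$ --- together with the bookkeeping required to present all the relevant generating sets over disjoint alphabets; the rest is either quoted from the structure theorems proved above or is the standard behaviour of regular languages under concatenation.
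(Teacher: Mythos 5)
Your proposal is correct and follows essentially the same route as the paper: use Theorem \ref{NiceStructureOfOutZForTree} to reduce $\OutZ$ to $\Ab$ and the groups $\OutZ(L_i)$ (free groups since $\Gamma$ is a tree), quote \cite{MoAndSava} for those factors, then concatenate normal-form languages for the direct product and, via Theorem \ref{CSemiDirect}, prepend a regular normal-form language for the right-angled Artin group $\InnW$ (the paper cites automaticity of $\InnW$ where you invoke ShortLex representatives) to handle $\AutZ$. Your explicit closure lemma and the check that partial conjugations on a discrete link are McCool's basis-conjugating generators merely spell out steps the paper leaves implicit.
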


\begin{proof}
Consider the structure of $\OutZ$ as described in Theorem \ref{NiceStructureOfOutZForTree}.
For each $1 \leq i \leq N$, $W(L_i)$ is a free group and there exists a regular
language of normal forms $\mathcal{N}_i$ for $\OutZ(L_i)$
\cite{MoAndSava}. Since $\Ab$ is a finitely-generated free abelian
group, there is a regular language of normal forms
$\mathcal{N}_{\Ab}$ for $\Ab$. The language $\mathcal{N}_{\Ab}
\mathcal{N}_1 \mathcal{N}_2 \dots \mathcal{N}_{N}$ is a regular
language of normal forms for $\OutZ$.

Further, $\InnW$ is a right-angled Artin group and hence is automatic \cite[Theorem B]{Meier2}.  It follows that there is a regular language of normal forms
$\mathcal{N}_{I}$ for $\InnW$.  By Theorem \ref{CSemiDirect}, the language $\mathcal{N}_{I} \mathcal{N}_{\Ab}
\mathcal{N}_1 \mathcal{N}_2 \dots \mathcal{N}_{N}$ is a regular language of normal forms for $\AutZ$.
\end{proof}

\appendix
\section{An example}\label{ExampleSection}

Let $\Gamma$ be the tree depicted in Figure \ref{TreeExampleFigure} and $\ordermap$ an order map on $\Gamma$.
\begin{figure}
\centering
\includegraphics[scale=0.3]{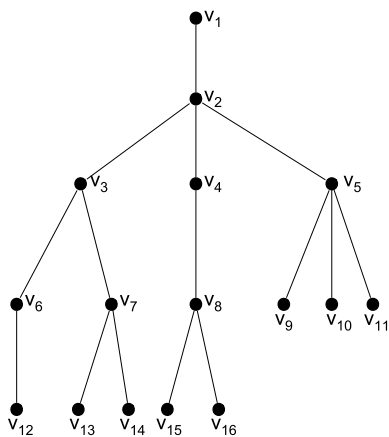}
\caption{The graph $\Gamma$ for $\S$\ref{ExampleSection}. \label{TreeExampleFigure}}
\end{figure}
By Laurence's result \cite[Theorem 4.1]{LaurenceThesis}, $\AutZ$ is generated by the set $\PC$:
\begin{eqnarray*}
\PC & = & \{\chi_{1 \{v_3, v_6, v_7, v_{12}, v_{13}, v_{14}\}}, \chi_{1 \{v_4, v_8, v_{15}, v_{16}\}}, \chi_{1 \{v_5, v_9, v_{10}, v_{11}\}},\\
    &   & \chi_{2 \{v_6, v_{12}\}}, \chi_{2 \{v_7, v_{13}, v_{14}\}}, \chi_{2 \{v_8, v_{15}, v_{16}\}}, v_{2 \{v_9\}}, v_{2 \{v_{10}\}}, v_{2 \{v_{11}\}}, \\
    &   & \chi_{3 \{v_1\}}, \chi_{3 \{v_{12}\}}, \chi_{3, \{v_{13}\}}, \chi_{3 \{v_{14}\}}, \chi_{3 \{v_4, v_8, v_{15}, v_{16}\}}, \chi_{3 \{v_5, v_9, v_{10}, v_{11}\}}, \\
    &   & \chi_{4 \{v_1\}}, \chi_{4 \{v_3, v_6, v_7, v_{12}, v_{13}, v_{14}\}}, \chi_{4 \{v_{15}\}}, \chi_{4 \{v_{16}\}}, \chi_{4 \{v_5, v_9, v_{10}, v_{11}\}}, \\
    &   & \chi_{5 \{v_1\}}, \chi_{5 \{v_3, v_6, v_7, v_{12}, v_{13}, v_{14}\}}, \chi_{5 \{v_4, v_8, v_{15}, v_{16}\}},  \\
    &   & \chi_{6 \{v_1, v_2, v_4, v_5, v_8, v_9, v_{10}, v_{11}, v_{15}, v_{16}\}}, \chi_{6 \{v_7, v_{13}, v_{14}\}},  \\
    &   & \chi_{7 \{v_1, v_2, v_4, v_5, v_8, v_9, v_{10}, v_{11}, v_{15}, v_{16}\}}, \chi_{7 \{v_6, v_{12}\}},  \\
    &   & \chi_{8 \{v_1, v_2, v_3, v_5, v_6, v_7, v_9, v_{10}, v_{11}, v_{12}, v_{13}, v_{14}\}}, \\
    &   & \chi_{9 \{v_1, v_2, v_3, v_4, v_6, v_7, v_8, v_{12}, v_{13}, v_{14}, v_{15}, v_{16}\}}, \chi_{9 \{v_{10}\}}, \chi_{9 \{v_{11}\}},   \\
    &   & \chi_{10 \{v_1, v_2, v_3, v_4, v_6, v_7, v_8, v_{12}, v_{13}, v_{14}, v_{15}, v_{16}\}}, \chi_{10 \{v_{9}\}}, \chi_{10 \{v_{11}\}},\\
    &   & \chi_{11 \{v_1, v_2, v_3, v_4, v_6, v_7, v_8, v_{12}, v_{13}, v_{14}, v_{15}, v_{16}\}}, \chi_{11 \{v_{9}\}}, \chi_{11 \{v_{10}\}},\\
    &   & \chi_{12 \{v_1, v_2, v_3, v_4, v_5, v_7, v_8, v_9, v_{10}, v_{11}, v_{13}, v_{14}, v_{15}, v_{16}\}}, \\
    &   & \chi_{13 \{v_1, v_2, v_3, v_4, v_5, v_6, v_8, v_9, v_{10}, v_{11}, v_{12}, v_{15}, v_{16}\}}, \chi_{13 \{v_{14}\}},\\
    &   & \chi_{14 \{v_1, v_2, v_3, v_4, v_5, v_6, v_8, v_9, v_{10}, v_{11}, v_{12}, v_{15}, v_{16}\}}, \chi_{14 \{v_{13}\}},\\
    &   & \chi_{15 \{v_1, v_2, v_3, v_4, v_5, v_6, v_7, v_9, v_{10}, v_{11}, v_{12}, v_{13}, v_{14}\}}, \chi_{15 \{v_{16}\}},\\
    &   & \chi_{16 \{v_1, v_2, v_3, v_4, v_5, v_6, v_7, v_9, v_{10}, v_{11}, v_{12}, v_{13}, v_{14}\}}, \chi_{16 \{v_{15}\}} \}.
\end{eqnarray*}
The sets $\PartitionSet_1, \dots, \PartitionSet_{16}$ are as follows:
\begin{eqnarray*}
\PartitionSet_1 & = & \PartitionSet_9 = \PartitionSet_{10} = \PartitionSet_{11} = \PartitionSet_{12} = \PartitionSet_{13} = \PartitionSet_{14} = \PartitionSet_{15} = \PartitionSet_{16} = \emptyset \\
\PartitionSet_2 & = & \{\chi_{1 \{v_3, v_6, v_7, v_{12}, v_{13}, v_{14}\}}, \chi_{1 \{v_4, v_8, v_{15}, v_{16}\}}, \chi_{1 \{v_5, v_9, v_{10}, v_{11}\}}, \chi_{3 \{v_1\}}, \chi_{3 \{v_4, v_8, v_{15}, v_{16}\}}, \\
                &   & \chi_{3 \{v_5, v_9, v_{10}, v_{11}\}}, \chi_{4 \{v_1\}}, \chi_{4 \{v_3, v_6, v_7, v_{12}, v_{13}, v_{14}\}}, \chi_{4 \{v_5, v_9, v_{10}, v_{11}\}}, \chi_{5 \{v_1\}}, \\
                &   & \chi_{5 \{v_3, v_6, v_7, v_{12}, v_{13}, v_{14}\}}, \chi_{5 \{v_4, v_8, v_{15}, v_{16}\}}\} \\
\PartitionSet_3 & = & \{\chi_{2 \{v_6, v_{12}\}}, \chi_{2 \{v_7, v_{13}, v_{14}\}}, \chi_{6 \{v_1, v_2, v_4, v_5, v_8, v_9, v_{10}, v_{11}, v_{15}, v_{16}\}}, \chi_{6 \{v_7, v_{13}, v_{14}\}}, \\
                &   & \chi_{7 \{v_1, v_2, v_4, v_5, v_8, v_9, v_{10}, v_{11}, v_{15}, v_{16}\}}, \chi_{7 \{v_6, v_{12}\}}\} \\
\PartitionSet_4 & = & \{\chi_{2 \{v_8, v_{15}, v_{16}\}}, \chi_{8 \{v_1, v_2, v_3, v_5, v_6, v_7, v_9, v_{10}, v_{11}, v_{12}, v_{13}, v_{14}\}}\} \\
\PartitionSet_5 & = & \{v_{2 \{v_9\}}, v_{2 \{v_{10}\}}, v_{2 \{v_{11}\}}, \chi_{9 \{v_1, v_2, v_3, v_4, v_6, v_7, v_8, v_{12}, v_{13}, v_{14}, v_{15}, v_{16}\}}, \chi_{9 \{v_{10}\}}, \chi_{9 \{v_{11}\}}, \\
                &   & \chi_{10 \{v_1, v_2, v_3, v_4, v_6, v_7, v_8, v_{12}, v_{13}, v_{14}, v_{15}, v_{16}\}}, \chi_{10 \{v_{9}\}}, \chi_{10 \{v_{11}\}}, \\
                &   & \chi_{11 \{v_1, v_2, v_3, v_4, v_6, v_7, v_8, v_{12}, v_{13}, v_{14}, v_{15}, v_{16}\}}, \chi_{11 \{v_{9}\}}, \chi_{11 \{v_{10}\}}\} \\
\PartitionSet_6 & = & \{\chi_{3 \{v_{12}\}}, \chi_{12 \{v_1, v_2, v_3, v_4, v_5, v_7, v_8, v_9, v_{10}, v_{11}, v_{13}, v_{14}, v_{15}, v_{16}\}}\} \\
\PartitionSet_7 & = & \{\chi_{3 \{v_{13}\}}, \chi_{3 \{v_{14}\}}, \chi_{13 \{v_1, v_2, v_3, v_4, v_5, v_6, v_8, v_9, v_{10}, v_{11}, v_{12}, v_{15}, v_{16}\}}, \chi_{13 \{v_{14}\}},  \\
                &   & \chi_{14 \{v_1, v_2, v_3, v_4, v_5, v_6, v_8, v_9, v_{10}, v_{11}, v_{12}, v_{15}, v_{16}\}}, \chi_{14 \{v_{13}\}}\} \\
\PartitionSet_8 & = & \{\chi_{4 \{v_{15}\}}, \chi_{4 \{v_{16}\}}, \chi_{15 \{v_1, v_2, v_3, v_4, v_5, v_6, v_7, v_9, v_{10}, v_{11}, v_{12}, v_{13}, v_{14}\}}, \chi_{15 \{v_{16}\}}, \\
                &   & \chi_{16 \{v_1, v_2, v_3, v_4, v_5, v_6, v_7, v_9, v_{10}, v_{11}, v_{12}, v_{13}, v_{14}\}}, \chi_{16 \{v_{15}\}}\}.
\end{eqnarray*}

To construct $\PCMinus$ from $\PC$, we remove the first partial conjugation from each line in the description of $\PC$ above to get:
\begin{eqnarray*}
\PCMinus & = & \{ \chi_{1 \{v_4, v_8, v_{15}, v_{16}\}}, \chi_{1 \{v_5, v_9, v_{10}, v_{11}\}}, \chi_{2 \{v_7, v_{13}, v_{14}\}},
  \chi_{2 \{v_8, v_{15}, v_{16}\}}, v_{2 \{v_9\}}, v_{2 \{v_{10}\}},  \\
    &   & v_{2 \{v_{11}\}},\chi_{3 \{v_{12}\}}, \chi_{3, \{v_{13}\}}, \chi_{3 \{v_{14}\}}, \chi_{3 \{v_4, v_8, v_{15}, v_{16}\}}, \chi_{3 \{v_5, v_9, v_{10}, v_{11}\}}, \\
    &   & \chi_{4 \{v_3, v_6, v_7, v_{12}, v_{13}, v_{14}\}}, \chi_{4 \{v_{15}\}}, \chi_{4 \{v_{16}\}}, \chi_{4 \{v_5, v_9, v_{10}, v_{11}\}},
     \chi_{5 \{v_3, v_6, v_7, v_{12}, v_{13}, v_{14}\}}, \\
    &   & \chi_{5 \{v_4, v_8, v_{15}, v_{16}\}},
    \chi_{6 \{v_7, v_{13}, v_{14}\}},
    \chi_{7 \{v_6, v_{12}\}},
    \chi_{9 \{v_{10}\}}, \chi_{9 \{v_{11}\}},
    \chi_{10 \{v_{9}\}}, \chi_{10 \{v_{11}\}},\\
    & & \chi_{11 \{v_{9}\}}, \chi_{11 \{v_{10}\}},
     \chi_{13 \{v_{14}\}},
     \chi_{14 \{v_{13}\}},
     \chi_{15 \{v_{16}\}},
     \chi_{16 \{v_{15}\}} \}.
\end{eqnarray*}


The groups $\langle \PartitionSet_1^0 \rangle , \dots, \langle \PartitionSet_{16}^0 \rangle $ are as follows:
\begin{eqnarray*}
\langle \PartitionSet_i^0 \rangle & \cong & \langle \emptyset \rangle \cong \{id\} \cong \OutW(L_i) \hbox{ for } i = 1, 9, 10, 11, 12, 13, 14, 15, 16; \\
  & & \\
\langle \PartitionSet_2^0 \rangle & \cong & \langle \{\chi_{1 \{v_4, v_8, v_{15}, v_{16}\}}, \chi_{1 \{v_5, v_9, v_{10}, v_{11}\}}, \chi_{3 \{v_4, v_8, v_{15}, v_{16}\}}, \chi_{3 \{v_5, v_9, v_{10}, v_{11}\}}, \\
  & &  \chi_{4 \{v_3, v_6, v_7, v_{12}, v_{13}, v_{14}\}}, \chi_{4 \{v_5, v_9, v_{10}, v_{11}\}}, \chi_{5 \{v_3, v_6, v_7, v_{12}, v_{13}, v_{14}\}}, \chi_{5 \{v_4, v_8, v_{15}, v_{16}\}}\} \rangle \\
  & \cong & \OutZ(L_2); \\
  & & \\
\end{eqnarray*}
\begin{eqnarray*}
\langle \PartitionSet_3^0 \rangle & \cong & \langle \{\chi_{2 \{v_7, v_{13}, v_{14}\}}, \chi_{6 \{v_7, v_{13}, v_{14}\}}, \chi_{7 \{v_6, v_{12}\}}\} \rangle \\
  & \cong & \OutZ(L_3); \\
  & & \\
\langle \PartitionSet_4^0 \rangle & \cong & \langle \{\chi_{2 \{v_8, v_{15}, v_{16}\}}\} \rangle \\
  & \cong & \Integer_{\ordermap(2)} \\
  & \cong & \Integer_{\ordermap(2)} \times \OutZ(L_4); \\
  & & \\
\langle \PartitionSet_5^0 \rangle & \cong & \langle \{v_{2 \{v_9\}}, v_{2 \{v_{10}\}}, v_{2 \{v_{11}\}}, \chi_{9 \{v_{10}\}}, \chi_{9 \{v_{11}\}}, \chi_{10 \{v_{9}\}}, \chi_{10 \{v_{11}\}}, \chi_{11 \{v_{9}\}}, \chi_{11 \{v_{10}\}}\} \rangle \\
  & \cong & \langle \{v_{2 \{v_9\}}\} \cup \{v_{2 \{v_{10}\}}, v_{2 \{v_{11}\}}, \chi_{9 \{v_{10}\}}, \chi_{9 \{v_{11}\}}, \chi_{10 \{v_{9}\}}, \chi_{10 \{v_{11}\}}, \chi_{11 \{v_{9}\}}, \chi_{11 \{v_{10}\}}\} \rangle  \\
                                  & \cong & \langle \{v_{2 \{v_9, v_{10}, v_{11}\}}\} \cup \{v_{2 \{v_{10}\}}, v_{2 \{v_{11}\}}, \chi_{9 \{v_{10}\}}, \chi_{9 \{v_{11}\}}, \chi_{10 \{v_{9}\}}, \chi_{10 \{v_{11}\}}, \chi_{11 \{v_{9}\}}, \chi_{11 \{v_{10}\}}\} \rangle  \\
                                  & \cong & \Integer_{\ordermap(2)} \times \OutZ(L_5);\\
                                  & & \\
\langle \PartitionSet_6^0 \rangle & \cong & \langle \{\chi_{3 \{v_{12}\}}\} \rangle \\
  & \cong & \Integer_{\ordermap(3)} \\
  & \cong & \Integer_{\ordermap(3)} \times \OutZ(L_6); \\
  & & \\
\langle \PartitionSet_7^0 \rangle & \cong & \langle \{\chi_{3 \{v_{13}\}}, \chi_{3 \{v_{14}\}}, \chi_{13 \{v_{14}\}},  \chi_{14 \{v_{13}\}}\} \rangle \\
                                  & \cong & \langle \{\chi_{3 \{v_{13}\}}\} \cup \{ \chi_{3 \{v_{14}\}}, \chi_{13 \{v_{14}\}},  \chi_{14 \{v_{13}\}}\} \rangle  \\
                                  & \cong & \langle \{\chi_{3 \{v_{13}, v_{14}\}}\} \cup \{ \chi_{3 \{v_{14}\}}, \chi_{13 \{v_{14}\}},  \chi_{14 \{v_{13}\}}\} \rangle  \\
                                  & \cong & \Integer_{\ordermap(3)} \times \OutZ(L_7);\\
                                  & & \\
\langle \PartitionSet_8^0 \rangle & \cong & \langle \{\chi_{4 \{v_{15}\}}, \chi_{4 \{v_{16}\}}, \chi_{15 \{v_{16}\}}, \chi_{16 \{v_{15}\}}\} \rangle \\
                                  & \cong & \langle \{\chi_{4 \{v_{15}\}}\} \cup \{\chi_{4 \{v_{16}\}}, \chi_{15 \{v_{16}\}}, \chi_{16 \{v_{15}\}}\} \rangle  \\
                                  & \cong & \langle \{\chi_{4 \{v_{15}, v_{16}\}}\} \cup \{\chi_{4 \{v_{16}\}}, \chi_{15 \{v_{16}\}}, \chi_{16 \{v_{15}\}}\} \rangle  \\
                                  & \cong & \Integer_{\ordermap(4)} \times \OutZ(L_8).\\
\end{eqnarray*}



\bibliographystyle{abbrv}
\bibliography{AutomorphismsOfGPAGroupsBib}

\end{document}